\documentclass[11pt]{amsart}
\usepackage{amsmath,amssymb,enumerate,etoolbox,mathrsfs,xcolor,url,hyperref}
\usepackage[normalem]{ulem}

\usepackage[top=30mm,right=25mm,bottom=30mm,left=25mm]{geometry}

\usepackage{amsthm}
\usepackage{tikz-cd}
\usepackage{amsrefs}
\usepackage{hyperref}
\usepackage{mathtools}
\hypersetup{colorlinks=true, linkcolor=black, filecolor=black, urlcolor=black,}

\theoremstyle{plain}

\newtheorem{teo}{Theorem}[section]
\newtheorem{lem}[teo]{Lemma}

\newtheorem{dfn}[teo]{Definition}

\newtheorem*{thmA}{Theorem A}
\newtheorem*{thmB}{Theorem B}
\newtheorem*{thmC}{Theorem C}

\newtheorem*{conj}{Conjecture}

\newcommand{\Syl}{\operatorname{Syl}}

\newcommand{\Gal}{\operatorname{Gal}}

\newcommand{\Aut}{\operatorname{Aut}}
\newcommand{\Out}{\operatorname{Out}}
\newcommand{\Ker}{\operatorname{Ker}}

\newcommand{\Sgn}{\operatorname{Sgn}}

\newcommand{\Irr}{\operatorname{Irr}}
\newcommand{\Inn}{\operatorname{Inn}}

\newcommand{\Lin}{\operatorname{Lin}}

\newcommand{\SL}{\operatorname{SL}}
\newcommand{\SU}{\operatorname{SU}}
\newcommand{\Sp}{\operatorname{Sp}}
\newcommand{\GU}{\operatorname{GU}}
\newcommand{\PSL}{\operatorname{PSL}}
\newcommand{\PSU}{\operatorname{PSU}}
\newcommand{\PGU}{\operatorname{PGU}}
\newcommand{\GL}{\operatorname{GL}}
\newcommand{\PGL}{\operatorname{PGL}}
\newcommand{\Sz}{\operatorname{Sz}}

\begin{document}

\title[Inductive Feit and Galois-McKay conditions for small-rank]
{Inductive Feit and Galois-McKay conditions for some small-rank simple groups of Lie Type} 

\author{Carlos Tapp}
\email{ct675@math.rutgers.edu} 
\address{Department of Mathematics\\
    Rutgers University \\ 
    Piscataway, NJ 08854\\ U.S.A.} 

\begin{abstract} We complete the proof of the inductive Feit condition and the inductive Galois-McKay condition for the simple groups $\PSL_2(q)$. We also prove that the Suzuki groups $^{2}B_2(2^{2n+1})$ satisfy the inductive Feit condition. \end{abstract}
\thanks{I want to express my gratitude to R. Boltje, N. Monteiro and G. Navarro for many useful conversations. Moreover, I would like to thank my advisor P. H. Tiep for suggesting me the problem, giving an excellent guidance and for his unwavering support in every aspect.}
\thanks{This work was done under the support of the NSF grant DMS-2200850, and the Mathematics Department of Rutgers University. }
\thanks{Finally, I want to thank the authors of \cite{BKNT} for providing an early preprint.}

\maketitle 
\section{Introduction}
Recall that the conductor of an abelian number field $K$ is the smallest positive integer $n$ such that $K\subseteq \mathbb{Q}_n$, where $\mathbb{Q}_n$ is the $n$-th cyclotomic field. Let $G$ be a finite group, denote by $\Irr(G)$ the set of complex irreducible characters
and let $\chi\in\Irr(G)$. We denote by $\mathbb{Q}(\chi)$ the smallest field containing the character values. Since character values are sums of roots of unity, $\mathbb{Q}(\chi)$ is an abelian number field. By definition, the conductor of $\chi$ is the conductor of the field $\mathbb{Q}(\chi)$, and we denote it by $c(\chi)$. In 1980 W. Feit proposed the following conjecture.

\begin{conj}[Feit] Let $G$ be a finite group and let $\chi\in\Irr(G)$. Then there exists $g\in G$ such that the order of $g$, $o(g)$ equals $c(\chi)$.
\end{conj}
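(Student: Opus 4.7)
The plan is to attack the conjecture via the Classification of Finite Simple Groups, following the reduction-plus-verification paradigm that is now standard for local--global conjectures (McKay, Alperin weight, Brauer height-zero). Concretely, I would (i) formulate an \emph{inductive Feit condition} on a non-abelian simple group $S$ strong enough to survive central extensions and $\Aut(S)$-twists, (ii) prove a reduction theorem saying that if every non-abelian simple group satisfies this condition then Feit's conjecture holds for every finite $G$, and (iii) verify the condition across the CFSG families. The condition itself should be phrased prime by prime: since $c(\chi)=\prod_p p^{a_p(\chi)}$, it suffices to exhibit, for each $p\mid c(\chi)$, a $p$-element of the right $p$-power order and then recombine via commuting Hall products.

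For (ii), take a minimal counterexample $(G,\chi)$ and a minimal normal subgroup $N\trianglelefteq G$. Pick $\theta\in\Irr(N)$ lying under $\chi$ and let $T=I_G(\theta)$. If $T<G$, the Clifford correspondent $\psi\in\Irr(T\mid\theta)$ has the same field of values as $\chi$, and minimality produces the required element in $T$, hence in $G$. The substantive case is $N=S^k$ for a non-abelian simple $S$, where $\chi_N$ is (up to the $S_k$-action) a product of $\Aut(S)$-twists of some $\theta\in\Irr(S)$; here the inductive condition on $S$ is used to build an element in $N$ of order $c(\theta)$ whose $\Aut(S)$-orbit the outer action permutes correctly, and a coset representative of $G/N$ then assembles it into an element of $G$ of order $c(\chi)$.

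The main obstacle is (iii). For the alternating simple groups one can argue directly from Murnaghan--Nakayama together with the known description of $\Gal(\mathbb{Q}(\chi)/\mathbb{Q})$ on $\Irr(A_n)$, and for sporadic groups one inspects the ATLAS. The real difficulty lies with the simple groups of Lie type: character fields there are controlled by Deligne--Lusztig theory and the $\Aut(S)$-action mixes diagonal, field and graph automorphisms, so producing the required element \emph{in an automorphism-equivariant and Galois-equivariant way} is delicate. The present paper carries out this step for $\PSL_2(q)$ and the Suzuki groups $^{2}B_2(q)$, whose character tables are explicit and whose outer automorphism groups are cyclic, setting the template that higher-rank families will have to follow.
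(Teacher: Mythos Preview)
The statement you are trying to prove is listed in the paper as a \emph{conjecture}, and the paper does not claim to prove it; it only verifies the inductive Feit condition of \cite{BKNT} for $\PSL_2(q)$ and $\Sz(q)$. What you have written is not a proof but an outline of the reduction-plus-verification programme, and you yourself identify the gap: step~(iii), the verification of the inductive condition across all finite simple groups of Lie type, is wide open. After \cite{BKNT} and the present paper only alternating groups, sporadic groups, $\PSL_2(q)$ and $^2B_2(q)$ are known to satisfy it, so the programme is far from complete and your proposal cannot be counted as a proof of Feit's conjecture.

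Two further points where your sketch does not match the actual reduction. First, the inductive Feit condition is not a prime-by-prime statement; the condition in Definition~\ref{IFC} involves the full Galois group $\mathcal{G}=\Gal(\mathbb{Q}^{ab}/\mathbb{Q})$ and a central ordering of $\mathcal{G}$-triples, not separate $p$-local data, and your ``recombine via commuting Hall products'' step is unjustified (elements of the correct $p$-power orders for different $p$ need not commute in $G$). Second, in your Clifford step you assert that the correspondent $\psi\in\Irr(T\mid\theta)$ has the same field of values as $\chi=\psi^G$; in general one only has $\mathbb{Q}(\chi)\subseteq\mathbb{Q}(\psi)$, so finding an element of order $c(\psi)$ in $T$ does not yield one of order $c(\chi)$. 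The genuine reduction in \cite{BKNT} is considerably more delicate, proceeding through projective representations and the $\succeq_{\mathbf c}$ relation rather than the na\"ive Clifford descent you describe.
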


This conjecture is known to be true for solvable groups (see \cite{AC}, \cite{FT}). After the late 1980's not much progress was made until recently. In \cite{BKNT}, R. Boltje, A. Kleshchev, G. Navarro, and P. Tiep proved a reduction theorem for this conjecture. This means that there is a condition that if satisfied by all finite non-abelian simple groups, then Feit Conjecture holds (see Definition 3.1 and Theorem A of \cite{BKNT}). In \cite{BKNT}, which is our main reference in this paper, it was proved that the alternating groups and the sporadic groups satisfy the so-called inductive Feit condition. It was also proved that for a prime $p$, and $q=p^n$ the simple groups $\PSL_2(q)$ also satisfy the inductive Feit condition, but when $p$ is odd it was proved under the assumption that $n$ is odd (see Theorem 8.4 of \cite{BKNT}). Here we remove the assumption of $n$ being odd, therefore we get the following.

\begin{thmA}
Let $p$ be a prime and $q=p^n$ so that the group $S=\PSL_2(q)$ is simple. Then $S$ satisfies the inductive Feit condition.
\end{thmA}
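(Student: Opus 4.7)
The plan is to build on Theorem 8.4 of \cite{BKNT}, which establishes the inductive Feit condition for $S = \PSL_2(q)$ when $q$ is even or when $q = p^n$ with $p$ odd and $n$ odd. It therefore suffices to treat the remaining case: $p$ odd and $n$ even. I would follow the structure of the odd-$n$ proof, adjusting each ingredient to accommodate the larger outer automorphism group.

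First I would unpack Definition 3.1 of \cite{BKNT} and isolate the data that must be exhibited for each $\chi \in \Irr(S)$: a character of a suitable normalizer with the same conductor, equivariance under $\Aut(S)_\chi$ and under the appropriate subgroup of $\Gal(\Q^{\mathrm{ab}}/\Q)$, and a cohomological (extendibility) condition. In parallel I would tabulate $\Irr(S)$ in the familiar way (trivial, Steinberg, principal series, discrete series, and the two exceptional characters of degree $(q \pm 1)/2$), and compute $\Q(\chi)$ and $c(\chi)$ explicitly from the classical formulas. The conductors will divide $q-1$ or $q+1$ in most cases, while the exceptional characters introduce a quadratic subfield whose shape depends on $q \bmod 4$.

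Next I would analyze the joint action of $\Out(S) = \langle \delta \rangle \times \langle \sigma \rangle$, with $\sigma$ the field automorphism of order $n$, and of the relevant Galois group on $\Irr(S)$. The crucial new feature in the even-$n$ case is that $\sigma^{n/2}$ is an involution fixing the subfield $\F_{p^{n/2}}$: this enlarges several stabilizers and alters the way the Galois action intertwines with $\sigma$, in particular for characters whose field of values lies in $\Q(\zeta_{p^{n/2}\pm 1})$. With the tables in hand, I would construct the required bijection between $\Irr(S)$ and the corresponding set of characters of $N_{\overline{G}}(P)$ (for $P$ the chosen Sylow subgroup as in \cite{BKNT}) family by family, verifying conductor-preservation and $(\Aut(S)\times\Gal)$-equivariance on each.

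The main obstacle is expected to be the pair of exceptional characters of degree $(q \pm 1)/2$: when $q$ is a perfect square their conductors, their $\sigma$-stabilizers, and their Galois orbits can differ from the odd-$n$ case, and the cohomological clause of the inductive condition must be verified directly rather than by the ad hoc arguments that sufficed for $n$ odd. I expect to resolve this by exhibiting explicit extensions of these characters to their $\Aut(S)$-stabilizers, either via the standard extensions through $\PGL_2(q)\rtimes\langle\sigma\rangle$ or by comparison with the subfield subgroup $\PSL_2(p^{n/2})$, reducing the extendibility question to one already handled in \cite{BKNT}.
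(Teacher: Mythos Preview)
Your plan has two concrete gaps.

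First, the inductive Feit condition is stated for the universal cover $X$, which here is $\SL_2(q)$ (for $q\neq 9$), not for $S=\PSL_2(q)$ itself. You must work with all of $\Irr(X)$, including the characters not trivial on the centre.

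Second, and more importantly, you have misidentified the residual problem. Theorem~8.4 of \cite{BKNT} already disposes of the characters of degree $(q\pm1)/2$ for \emph{all} $n$; what is left open when $n$ is even are the real characters $\chi_j$ of degree $q+1$. For these, the proof in \cite{BKNT} produces a real $\mu\in\Irr(U)$ with $[\chi_U,\mu]=3$, where $U$ is the normaliser of a split torus. When $n$ is odd the quotient $\Delta_\mu/U$ is odd and criterion~(d) (real characters, odd quotient) finishes immediately. When $n$ is even, $\Delta_\mu/U$ has even order and (d) no longer applies; nor does (e), since $\Gamma_{\chi^{\mathcal G}}/\Inn(X)$ is not a $2$-group once $n$ has an odd prime factor. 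This is the genuine obstruction, and neither the degree-$(q\pm1)/2$ characters nor the subfield subgroup $\PSL_2(p^{n/2})$ are relevant to it.

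The paper resolves this by proving a new criterion (Theorem~\ref{crit}(f)): one passes to a group $\Sigma\geq X$ with $\Sigma/X$ abelian, takes its normal $2$-complement $K/X$, and uses the unique real extensions $\chi^*\in\Irr(K_\chi)$ and $\mu^*\in\Irr(K\cap\Delta_\mu)$ (available because $|K/X|$ is odd). One then shows $[\chi^*_{K\cap\Delta_\mu},\mu^*]$ is still odd---in fact still $3$---by a short reality argument, which feeds into the $p=2$ version of the multiplicity criterion at the level of $K$. Your proposed route through explicit extensions in the style of criterion~(c) would require you to exhibit extensions $\chi^\sharp,\mu^\sharp$ to the full stabilisers and then match, for every $(\delta,\sigma)\in(\Gamma\times\mathcal G)_\chi$, the linear twist $\gamma$ on both sides; nothing in your outline indicates how you would control these twists for the degree-$(q+1)$ characters, and this is exactly what the normal-$2$-complement trick circumvents.
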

    
We also prove the condition for the Suzuki groups.
\begin{thmB}
The Suzuki groups $\Sz(q)$, where $q=2^{2n+1}$ and $n\in \mathbb{Z}_{\geq 1}$, satisfy the inductive Feit condition.
\end{thmB}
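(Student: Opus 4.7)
The plan is to verify, for $S = \Sz(q)$ with $q = 2^{2n+1}$ and $n\geq 1$, the inductive Feit condition in the form of \cite[Definition 3.1]{BKNT}. The main ingredients are Suzuki's explicit character table of $\Sz(q)$, the description $\Aut(S) = S \rtimes \langle\sigma\rangle$ with $\sigma$ a field automorphism of odd order $2n+1$, and the fact that the Schur multiplier of $\Sz(q)$ is trivial for $q > 8$.

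First, I would tabulate, for each $\chi \in \Irr(S)$, the character field $\Q(\chi)$ and its conductor $c(\chi)$. The irreducible characters split into five families: the trivial and Steinberg characters, which are rational and have conductor $1$; three families of semisimple characters parametrized by nontrivial characters of the three cyclic maximal tori of orders $q-1$, $q-r+1$, $q+r+1$ (with $r = 2^{n+1}$), whose character fields lie inside the corresponding cyclotomic fields; and a pair of exceptional characters $W_1, W_2$ of degree $r(q-1)/2$, whose values at the elements of order $4$ involve $\pm i$ so that $\Q(W_1) = \Q(W_2) = \Q(i)$ has conductor $4$.

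Next, for each $\chi$ I would pick a Feit element $g_\chi \in S$ with $o(g_\chi) = c(\chi)$: a generator of the appropriate cyclic subtorus for the semisimple families; the identity for the trivial and Steinberg characters; and an element of order $4$ for each of $W_1, W_2$. This matching then has to be upgraded to the $(\Aut(S)\times\Gal(\Q^{\mathrm{ab}}/\Q))$-equivariant data demanded by the inductive condition. The analysis is uniform within each semisimple family, since $\sigma$ acts essentially by squaring both on the relevant torus and on the parameter of the corresponding character, and this is compatible with the identification of the relevant piece of Galois action with raising roots of unity to the second power. For the exceptional pair, compatibility comes down to matching how $\sigma$ and complex conjugation act on $\{W_1, W_2\}$ with their action on the two classes of elements of order $4$, which can be read off directly from the character table.

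The main obstacle, in my view, is not the matching of orders and conductors, which is essentially combinatorial, but the construction of the projective extension data on the universal covering group required by \cite[Definition 3.1]{BKNT}, together with the verification that the chosen extensions have the correct factor sets; here one uses that $\Out(S)$ is cyclic of odd order coprime to $|S|$, which simplifies the cocycle analysis. A secondary obstacle is the case $q = 8$: the Schur multiplier of $\Sz(8)$ is $(\Z/2\Z)^2$, the outer automorphism group has order $3$, and the universal cover $2^2\cdot\Sz(8)$ must be treated separately, possibly by direct verification with character tables in the spirit of the sporadic-group arguments of \cite{BKNT}.
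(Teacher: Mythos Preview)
Your plan conflates Feit's conjecture with the \emph{inductive} Feit condition. Definition~3.1 of \cite{BKNT} (recalled here as Definition~\ref{IFC}) does not ask for an element $g_\chi$ of order $c(\chi)$; it asks, for each $\chi\in\Irr(X)$, for a proper self-normalizing intravariant subgroup $U<X$ and a character $\mu\in\Irr(U)$ such that $(\Gamma\times\mathcal{G})_\chi=(\Gamma\times\mathcal{G})_\mu$ and the central ordering $(X\rtimes\Gamma_{\chi^\mathcal{G}},X,\chi)_\mathcal{G}\succeq_c(U\rtimes\Gamma_{\chi^\mathcal{G}},U,\mu)_\mathcal{G}$ holds. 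Your tabulation of conductors and choice of ``Feit elements'' $g_\chi$ is a verification of Feit's conjecture for $\Sz(q)$ itself, which is neither necessary nor sufficient for the inductive condition; there is no mechanism in your outline to pass from $g_\chi$ to a pair $(U,\mu)$, and the ``upgrading to equivariant data'' step is where the entire content lies.

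What the paper actually does is select, for each character family, an appropriate local subgroup $U$ and then apply the criteria of Theorem~\ref{crit}. For the trivial, Steinberg and the two cuspidal unipotent characters $\Gamma_1,\Gamma_2$ (your $W_1,W_2$), one takes $U=\mathbf{N}_X(P)$ with $P\in\Syl_2(X)$ and chooses $\mu$ so that $[\chi_U,\mu]=1$, whence Theorem~\ref{crit}(a) applies. For each of the three semisimple families one takes $U$ to be the normalizer of the corresponding cyclic maximal torus, pairs $\chi$ with the induced torus character $\mu$ having matching values on the torus, checks $(\Gamma\times\mathcal{G})_\chi=(\Gamma\times\mathcal{G})_\mu$ by inspecting values on a torus generator, and concludes via Theorem~\ref{crit}(d) since $\chi,\mu$ are real and $|\Delta_\mu/U|$ divides the odd number $2n+1$. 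Your observation that $\Sz(8)$ needs separate treatment because of its nontrivial Schur multiplier is correct, and the paper indeed handles it by direct computation; but for the generic case your proposal is missing the core idea.
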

The statement of the inductive Feit condition contains a central ordering of $\mathcal{G}$-triples (condition (iii) of Definition \ref{IFC}), where $\mathcal{G}=\Gal(\mathbb{Q}^{ab}/\mathbb{Q})$. This central ordering also appears in other inductive conditions such as in the inductive Galois-McKay condition of \cite{NSV}, to which the Galois-McKay conjecture is reduced (see Definition \ref{IGMC}). It involves finding suitable projective representations. The techniques developed in \cite{BKNT} (see, for example, Theorem 3.2) help linearizing the conditions that need to be satisfied, making inductive conditions of this kind easier to prove. As an example, we have completed the remaining unpublished cases of Theorem C below.

\begin{thmC}
    Let $p$ be a prime and $q=p^n$ so that the group $S=\PSL_2(q)$ is simple. Then $S$ satisfies the inductive Galois-McKay condition for any prime $\ell$ dividing $|S|$.
\end{thmC}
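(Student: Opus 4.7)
The plan is to verify Definition \ref{IGMC} for $S=\PSL_2(q)$ directly, reusing the reduction machinery that underlies the proof of Theorem A. Fix a prime $\ell$ dividing $|S|$, choose $P\in\Syl_\ell(S)$, and set $N=N_S(P)$. For $\PSL_2(q)$ these normalizers are classical: when $\ell=p$, $P$ has order $q$ and $N$ is the image of a Borel subgroup of $\SL_2(q)$; when $\ell\mid q\pm 1$, $P$ is cyclic and $N$ is a dihedral normalizer of a maximal torus. The corresponding McKay bijection $\Omega:\Irr_{\ell'}(S)\to\Irr_{\ell'}(N)$ is completely explicit in terms of the classical character tables, obtained by matching principal and discrete series to linear characters of the appropriate tori and sending the Steinberg and trivial characters to the obvious targets.

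The verification then proceeds in two stages. In the first stage I check that $\Omega$ commutes with the joint action of $\Aut(S)\times\mathcal{G}_\ell$, where $\mathcal{G}_\ell\leq\mathcal{G}$ is the subgroup fixing $\ell$-power roots of unity. Since the diagonal, field, and graph automorphisms act on the relevant characters by permuting the parameters that label a torus, and the Galois action acts on those same parameters, equivariance follows by direct inspection of the character formulas. In the second stage I check the central/projective compatibility built into Definition \ref{IGMC}: for each $\chi\in\Irr_{\ell'}(S)$ with image $\psi=\Omega(\chi)$, one must exhibit projective representations of the stabilizers of $\chi$ and $\psi$ in the appropriate central extensions, with matching central characters and identical $\mathcal{G}_\ell$-transformation behavior.

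The main obstacle is precisely this second stage: a priori it requires a delicate cohomological construction for each pair $(\chi,\psi)$. To bypass it I would apply Theorem 3.2 of \cite{BKNT}, which linearizes the projective setup by reducing the search to a genuine (rather than projective) representation of a Schur cover with prescribed Galois-equivariance. Since the Schur multiplier of $\PSL_2(q)$ is cyclic of order $\gcd(2,q-1)$ outside a short list of exceptions, the relevant obstruction lies in a very small abelian cohomology group, and the required linearizations can be built by lifting characters from $S$ to $\SL_2(q)$ and tracking the explicit action of diagonal, field and graph automorphisms on them. The exceptional Schur multipliers, which occur for $\PSL_2(4)\cong\PSL_2(5)$, $\PSL_2(9)$ and a handful of small $q$, are finite in number and handled by direct computation. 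Combined with the cases of Theorem C that already appear in the literature, this completes the verification for every prime $\ell\mid |S|$.
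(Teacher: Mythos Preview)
Your outline has the right overall shape, but there are several concrete gaps that prevent it from being a proof.

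First, Definition~\ref{IGMC} is stated for the universal covering group $X$ of $S$, not for $S$ itself. You set up the Sylow normalizer $N=N_S(P)$ and the bijection $\Omega:\Irr_{\ell'}(S)\to\Irr_{\ell'}(N)$, but for odd $q$ one must work in $X=\SL_2(q)$ (and in $6.A_6$ when $q=9$), where the local subgroups are dicyclic rather than dihedral and the faithful characters of $X$ enter the picture. The sentence about ``lifting characters from $S$ to $\SL_2(q)$'' suggests you are aware of this, but the setup as written does not match the definition. Also, $\PSL_2(q)$ has no graph automorphisms; only diagonal and field automorphisms occur.

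Second, your Galois group $\mathcal{G}_\ell$, described as ``the subgroup fixing $\ell$-power roots of unity'', is not the group $\mathcal{H}$ of Definition~\ref{IGMC}. The correct $\mathcal{H}$ consists of those $\sigma\in\mathcal{G}$ that act on every root of unity of $\ell'$-order as an $\ell$-power map; it imposes no condition on $\ell$-power roots. This distinction matters: the equivariance of $\Omega$ on the characters $\xi_i,\eta_i$ of degree $(q\pm1)/2$ is \emph{not} immediate from the character formulas, but requires quadratic reciprocity and Gauss-sum identities to compare how $\mathcal{H}$ moves $\sqrt{\epsilon_p p}$ on the global side and $i$ (or related values) on the local side. ``Direct inspection'' does not suffice here.

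Third, and most seriously, the appeal to Theorem 3.2 of \cite{BKNT} to ``linearize'' the projective compatibility hides the genuine difficulty. For the characters $\eta_i$ of degree $(q-1)/2$ when $\ell\mid q+1$, one finds $[\chi_U,\Omega(\chi)]=0$, so the multiplicity-one criterion (part (a)) is unavailable, and the real-valued criterion fails when $n$ is even. The paper handles this by embedding $\SL_2(q)\rtimes\langle\alpha\rangle$ into $\Sp_{2n}(p)$ and restricting Weil characters to obtain distinguished extensions $\chi^*$ with controlled field of values, then matching their $\mathcal{H}$-transformation law (governed by the Legendre symbol $(\ell/p)$) against a carefully chosen extension $\mu^*$ on the local side. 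Your proposal gives no indication of how to produce such extensions or why the transformation laws should agree; this is precisely the ``missing idea'' without which the central-order condition cannot be verified for these characters.
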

We remark that this result was announced by B. Späth in \cite{NSV}, however, the manuscript has never been made public. We thank B. Späth for allowing us to work on this problem independently. Finally, we refer the reader to the introductions of \cite{BKNT}, \cite{NSV} for more information about the history of the Feit conjecture and the Galois-McKay conjecture.\\

\section{General results}
The goal of this section is to set the notation and study some general results that should always be kept in mind for the subsequent sections. The letters $G,X$ will always denote finite groups, and we will follow notations in \cite{Is},\cite{N1} for characters and (projective) representations. For a more detailed treatment of the next three subsections see Section 2  of \cite{BKNT} and Section 1 of \cite{NSV}.\\

\subsection{Introducing \texorpdfstring{$\mathcal{G}$}{G}-triples.}
Following Chapter 11 of \cite{Is}, if $N$ is a normal subgroup of $G$ and $\theta\in\Irr(N)$ is $G$-invariant, we say that the triplet $(G,N,\theta)$ is a \textbf{character triple}.

We denote by $\mathbb{Q}^{ab}$ the field generated by all roots of unity in $\mathbb{C}$. It is the maximal abelian extension of $\mathbb{Q}$ in the field of algebraic numbers. Let
$\mathcal{G}$ be a subgroup of $ \Gal(\mathbb{Q}^{ab}/\mathbb{Q})$, note that $\mathcal{G}$ is abelian. Note that $\mathcal{G}$ acts on the irreducible characters of any finite group $X$ via $\psi^{\sigma}(x)=\sigma(\psi(x))$ for any $\sigma\in\mathcal{G}, \psi\in\Irr(X), x\in X$. Let us mention a general fact, if $\Gamma$ is a group acting on $X$ from the right via automorphisms, then $\Gamma$ acts naturally on $\Irr(X)$ via $\theta^{\gamma}(x^{\gamma})=\theta(x)$ for any $\gamma\in\Gamma$, $\theta\in\Irr(X)$, $x\in X$. The actions of $\Gamma$ and $\mathcal{G}$ commute so $\Gamma\times\mathcal{G}$ acts naturally on $\Irr(X)$. In particular, if $N$ is normal in $G$, considering the action of the group $\mathcal{G}$ on $\Irr(N)$, we say that for $\theta\in\Irr(N)$, the triplet $(G,N,\theta)$ is a \textbf{$\mathcal{G}$-triple }if the $G$-orbit of $\theta$ is contained in the $\mathcal{G}$-orbit of $\theta$ and we write $(G,N,\theta)_{\mathcal{G}}$. 
\subsection{Review of projective representations.}
Recall that a projective representation of $G$ is a map $\mathcal{P}:G\to \GL_n(\mathbb{C})$ so that for any $g,h\in G$ there exists some $\alpha(g,h)\in\mathbb{C}^\times$ such that $\mathcal{P}(gh)=\alpha(g,h)\mathcal{P}(g)\mathcal{P}(h)$. Therefore $\mathcal{P}$ determines function $\alpha:G\times G  \to \mathbb{C}^\times$ which we call factor set associated to $\mathcal{P}$. It turns out that $\alpha$ satisfies the 2-cocycle relation $\alpha(g,h)\alpha(gh,k)=\alpha(g,hk)\alpha(h,k)$, so $\alpha\in Z^2(G,\mathbb{C}^\times)$, the group 2-cocycles of G with values in $\mathbb{C}^\times$. The notion of similarity of two projective representations is the same as for ordinary representations. We also recall that for any function $\mu:G\to \mathbb{C}^\times$, the function $\mu\mathcal{P}$ is a projective representation with factor set $\delta(\mu)\alpha$ where $\delta(\mu)(g,h)=\mu(g)\mu(h)\mu(gh)^{-1}$. The functions of the form $\delta(\mu)$ form the subgroup $B^2(G,\mathbb{C}^\times)\leq Z^2(G,\mathbb{C}^\times)$ of 2-coboundaries of $G$ with values in $\mathbb{C}^\times$.

When $(G,N,\theta)$ is a character triple, the character $\theta\in\Irr(N)$ may or may not extend to $G$. If it extends we will have a (ordinary) representation of $G$ such that when restricted to $N$ affords $\theta$, but when it does not extend we have to replace the notion of ordinary by projective.
The following is Definition 5.2 from \cite{N1}.
\begin{dfn}\label{def ass}
    Suppose that $(G,N,\theta)$ is a character triple. We say that a projective representation  $$
    \mathcal{P}:G\to \GL_{\theta(1)}(
    \mathbb{C)}$$ is \textbf{associated} with $\theta$ (or with $(G,N,\theta)$ )if\\
    \begin{itemize}
        \item[(a)] $\mathcal{P}_N$ is a representation of $N$ affording $\theta$, and
        \item[(b)] $\mathcal{P}(ng)=\mathcal{P}(n)\mathcal{P}(g)$ and $\mathcal{P}(gn)=\mathcal{P}(g)\mathcal{P}(n)$ for all $g\in G$ and $n\in N$.
    \end{itemize}
\end{dfn}
For any character triple $(G,N,\theta)$, there exists a projective representation $\mathcal{P}$ associated with $\theta$ by Theorem 11.2 of \cite{Is}, and if $\mathcal{P'}$ is another such such projective representation then $\mathcal{P}$ is similar to $\mu\mathcal{P}'$ for some $\mu:G\to \mathbb{C}^\times$ constant on cosets of $N$. Therefore the factor sets of $\mathcal{P},\mathcal{P}'$ differ by a 2-coboundary. The next is Lemma 1.4 of \cite{NSV}.
\begin{lem}\label{1.4 NSV}
    Suppose $N$ is normal in $G$, $\theta\in\Irr(N)$, and assume that $\theta^{g\sigma}=\theta$ for some $g\in G$ and $\sigma\in \Gal(\mathbb{Q}^{ab}/\mathbb{Q})$. Let $\mathcal{P}$ be a projective representation of $G_{\theta}$ associated with $\theta$ with entries in $\mathbb{Q}^{ab}$ and factor set $\alpha\in Z^2(G_{\theta},\mathbb{C}^\times)$. Then 
    $$
    \mathcal{P}^{g\sigma}(x)=\mathcal{P}(gxg^{-1})^{\sigma}
    $$
    where $\sigma$ is applied entry-wise, defines again a projective representation of $G_\theta$ associated with $\theta$ with factor set $\alpha^{g\sigma}$ given by $\alpha^{g\sigma}(x,y)=\alpha^{g}(x,y)^{\sigma}=\alpha(gxg^{-1},gyg^{-1})^{\sigma}$ for $x,y\in G_{\theta}$. In particular, there is a unique function $$
    \mu_{g\sigma}:G_{\theta}\to\mathbb{C}^\times
    $$
    with $\mu_{g\sigma}(1)=1$ and constant on cosets of $N$ such that $\mathcal{P}^{g\sigma}$ is similar to $\mu_{g\sigma}\mathcal{P}$.
    \end{lem}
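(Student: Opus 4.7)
The plan is to unpack definitions and verify directly that $\mathcal{P}^{g\sigma}$ satisfies Definition \ref{def ass} with respect to the same character $\theta$; once this is done, the existence and uniqueness of $\mu_{g\sigma}$ with $\mu_{g\sigma}(1)=1$ follow immediately from the general uniqueness statement for associated projective representations recalled just before the lemma.

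First I would check that the formula makes sense, i.e.\ that $gG_\theta g^{-1}=G_\theta$. Since the $\mathcal{G}$-action on $\Irr(N)$ commutes with the $G$-action by conjugation, stabilizers in $G$ are preserved by $\mathcal{G}$. The hypothesis $\theta^{g\sigma}=\theta$ gives $\theta^g=\theta^{\sigma^{-1}}$, hence $G_{\theta^g}=G_{\theta^{\sigma^{-1}}}=G_\theta$; since $G_{\theta^g}=g^{-1}G_\theta g$, conjugating yields $gG_\theta g^{-1}=G_\theta$. Next I would verify the projective-multiplication identity by a direct computation, using that $\sigma$ is a ring automorphism of $\mathbb{Q}^{ab}$ and hence commutes with matrix products of entries in $\mathbb{Q}^{ab}$:
\begin{align*}
\mathcal{P}^{g\sigma}(x)\mathcal{P}^{g\sigma}(y)
&= \bigl[\mathcal{P}(gxg^{-1})\mathcal{P}(gyg^{-1})\bigr]^{\sigma} \\
&= \bigl[\alpha(gxg^{-1},gyg^{-1})^{-1}\mathcal{P}(gxyg^{-1})\bigr]^{\sigma} \\
&= \alpha(gxg^{-1},gyg^{-1})^{-\sigma}\,\mathcal{P}^{g\sigma}(xy),
\end{align*}
which reads $\mathcal{P}^{g\sigma}(xy)=\alpha^{g\sigma}(x,y)\mathcal{P}^{g\sigma}(x)\mathcal{P}^{g\sigma}(y)$ with $\alpha^{g\sigma}(x,y)=\alpha(gxg^{-1},gyg^{-1})^{\sigma}$, as claimed.

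Then I would verify conditions (a) and (b) of Definition \ref{def ass}. For (a), the map $n\mapsto \mathcal{P}(gng^{-1})$ is an ordinary representation of $N$ whose character is $\theta^g$; applying $\sigma$ entrywise yields a representation whose character is $(\theta^g)^\sigma=\theta^{g\sigma}=\theta$, which is exactly what the hypothesis provides. For (b), conjugation by $g$ stabilizes $N$, so for $n\in N$ and $x\in G_\theta$,
\[
\mathcal{P}^{g\sigma}(nx)=\mathcal{P}(gng^{-1}\cdot gxg^{-1})^\sigma=\bigl[\mathcal{P}(gng^{-1})\mathcal{P}(gxg^{-1})\bigr]^\sigma=\mathcal{P}^{g\sigma}(n)\mathcal{P}^{g\sigma}(x),
\]
and the symmetric identity $\mathcal{P}^{g\sigma}(xn)=\mathcal{P}^{g\sigma}(x)\mathcal{P}^{g\sigma}(n)$ is analogous.

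Finally, since $\mathcal{P}^{g\sigma}$ and $\mathcal{P}$ are both projective representations of $G_\theta$ associated with $\theta$, the paragraph preceding the lemma produces a function $\mu\colon G_\theta\to\mathbb{C}^\times$, constant on cosets of $N$, such that $\mathcal{P}^{g\sigma}$ is similar to $\mu\mathcal{P}$. Comparing restrictions to $N$ forces $\mu|_N$ to be trivial (otherwise $\mathcal{P}^{g\sigma}_N$ and $\mu\mathcal{P}_N$ would not both afford $\theta$); in particular $\mu(1)=1$. Uniqueness then follows from Schur's lemma applied to an intertwiner on $N$, which is determined up to a nonzero scalar, together with the observation that rescaling such an intertwiner leaves $\mu$ unchanged. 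The main obstacle in carrying this out is purely bookkeeping: consistently tracking the right actions of $G$ and $\mathcal{G}$ on $\Irr(N)$ and their compatibility with conjugation, and confirming that entrywise application of $\sigma$ commutes with matrix multiplication, which is built into the assumption that $\mathcal{P}$ has entries in $\mathbb{Q}^{ab}$.
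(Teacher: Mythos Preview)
Your argument is correct; the verification of Definition \ref{def ass} for $\mathcal{P}^{g\sigma}$ and the deduction of existence and uniqueness of $\mu_{g\sigma}$ are exactly the straightforward computations one expects. Note, however, that the paper does not supply its own proof of this lemma: it is quoted verbatim as Lemma 1.4 of \cite{NSV} and used as a black box. So there is no ``paper's proof'' to compare against beyond the reference; your write-up is the standard direct check and matches what one finds in \cite{NSV}.
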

The assumption of the entries being in $\mathbb{Q}^{ab}$ can always be satisfied by Theorem 2.1 of \cite{BKNT}. This result also allows making the assumption that the values of the factor sets are roots of unity.
    
\subsection{Central orderings and inductive conditions} The next definition is a key element in the inductive conditions that we will study. It first appeared in \cite{NSV} for a particular subgroup of $\Gal(\mathbb{Q}^{ab}/\mathbb{Q})$ and later in \cite{BKNT} (Definition 2.3).
\begin{dfn}[Central order of $\mathcal{G}$-triples]\label{central order}
Suppose that $(G, N, \theta)_{\mathcal{G}}$ and $(H, M, \varphi)_{\mathcal{G}}$ are $\mathcal{G}$-triples. We write
$$
(G, N, \theta)_{\mathcal{G}} \succeq_{\mathbf{c}}(H, M, \varphi)_{\mathcal{G}}
$$
if all the following conditions hold:
\begin{itemize}
    \item [(i)]$G=N H, N \cap H=M$, and $\mathbf{C}_{G}(N) \subseteq H$.
    \item [(ii)] $(H \times \mathcal{G})_{\theta}=(H \times \mathcal{G})_{\varphi}$. In particular, $H_{\theta}=H_{\varphi}$.
    \item [(iii)]There are projective representations $\mathcal{P}$ and $\mathcal{P}^{\prime}$ associated with $\left(G_{\theta}, N, \theta\right)$ and $\left(H_{\varphi}, M, \varphi\right)$ with entries in $\mathbb{Q}^{\text {ab }}$ and factor sets $\alpha$ and $\alpha^{\prime}$, respectively, such that $\alpha$ and $\alpha^{\prime}$ take roots of unity as values, $\alpha_{H_{\theta} \times H_{\theta}}=\alpha^{\prime}$, and, for each $c \in \mathbf{C}_{G}(N)$, the scalar matrices $\mathcal{P}(c)$ and $\mathcal{P}^{\prime}(c)$ are associated with the same scalar $\zeta_{c}$.
    \item [(iv)]For every $a \in(H \times \mathcal{G})_{\theta}$, the functions $\mu_{a}$ and $\mu_{a}^{\prime}$ given by Lemma \ref{1.4 NSV} agree on $H_{\theta}$. 
\end{itemize}

\end{dfn}

From now on we reserve the letter $\mathcal{G}$ for the full group $\Gal(\mathbb{Q}^{ab}/\mathbb{Q})$, $\mathcal{J}$ will denote an arbitrary subgroup of $\mathcal{G}$ of the form $\Gal(\mathbb{Q}^{ab}/F)$ for an intermediate field $\mathbb{Q}\subseteq F\subseteq \mathbb{Q}^{ab}$. We allow $\mathcal{J}$ to be of the following form. Let $\ell$ be an arbitrary but fixed prime. We will denote by $\mathcal{H}$ the subgroup of $\mathcal{G}$ consisting of the $\sigma \in \mathcal{G}$ for which there exists an integer $f$ such that $\sigma(\xi)=\xi^{\ell^f}$ for every root of unity $\xi$ of order not divisible by $\ell$. We now state definitions 3.1 of \cite{NSV} and \cite{BKNT} respectively.

\begin{dfn}\label{IGMC}

Let $S$ be a finite non-abelian simple group, with $\ell$ dividing $|S|$. Let
$X$ be a universal covering group of $S$, $L\in \Syl_{\ell}(X)$ and $\Gamma= \Aut(X)_L $. We say that $S$ satisfies the \textbf{inductive Galois–McKay condition for $\ell$} if there exist some $\Gamma$-stable proper subgroup $U$ of $X$ with $\mathbf{N}_X(L)\subseteq U $ and some $\Gamma \times \mathcal{H}$-equivariant bijection
$$
\Omega: \Irr_{\ell'}(X)\to \Irr_{\ell'}(U)
$$
such that for every $\chi\in \Irr_{\ell'}(X)$ we have 
$$
(X\rtimes \Gamma_{\chi^{\mathcal{H}}}, X, \chi)_{\mathcal{H}} \succeq_{\mathbf{c}}(U\rtimes\Gamma_{\chi^{\mathcal{H}}}, U, \Omega(\chi))_{\mathcal{H}}.
$$
\end{dfn}
\begin{dfn}\label{IFC}

Let $S$ be a finite non-abelian simple group, and let
$X$ be a universal covering group of $S$. We say that $S$ satisfies the \textbf{inductive Feit condition} if for every $\chi\in\Irr(X)$, there exists a pair $(U,\mu)$, where $U<X$ and $\mu\in \Irr(U)$ satisfying the following conditions:
\begin{itemize}
    \item[(i)] $U=\mathbf{N}_X(U)$ and $U$ is \textbf{intravariant} in $X$, i.e. for any $\delta\in \Aut(X)$, $U^\delta=U^x$ for some $x\in X$.
    \item [(ii)] $(\Gamma\times\mathcal{G})_{\chi}=(\Gamma\times\mathcal{G)_{\mu}}$, where $\Gamma=\Aut(X)_U$.
    \item [(iii)] $
(X\rtimes \Gamma_{\chi^{\mathcal{G}}}, X, \chi)_{\mathcal{G}} \succeq_{\mathbf{c}}(U\rtimes\Gamma_{\chi^{\mathcal{G}}}, U, \mu)_{\mathcal{G}}.
$
\end{itemize}

\end{dfn}
In both conditions one has to prove a central ordering. In some sense the inductive Feit condition is harder since the Galois group is bigger and also the condition does not point out where one should look at. The inductive Galois-McKay tells us more precisely what choice of $U$ we should take and since one has to construct the equivariant bijection first, it is more clear what to do. However this lack of freedom can turn the problem to be more difficult as we will see with the characters of $\SL_2(q)$ of degree $(q-1)/2$ below. 
\subsection{Techniques to prove the inductive conditions.}
As said before, in Theorem 3.2 of \cite{BKNT} there are criteria that help substantially when proving the condition involving ``$\succeq_c$". The next theorem is a restatement of this theorem (skipping (c),(e) from there, since we will not use them) in a way that is also applicable to prove the inductive Galois-McKay condition. Moreover, we add one extra statement, part (f), that will be needed in the next section.

\begin{teo}\label{crit}
Let $S$ be a non-abelian finite simple group, let $X$ be a universal covering group of $S$, and let $\chi \in \operatorname{Irr}(X)$. Further, let $U<X$ be self-normalizing and intravariant in $X$. Let $\Gamma=\operatorname{Aut}(X)_{U}$ and assume that $\mu \in \operatorname{Irr}(U)$ is such that $(\Gamma \times \mathcal{J})_{\chi}=(\Gamma \times \mathcal{J})_{\mu}$.\\

\begin{itemize}
    \item [(a)] If $[ \chi_U, \mu]=1$ then $(X\rtimes \Gamma_{\chi^{\mathcal{J}}}, X, \chi)_{\mathcal{J}} \succeq_{\mathbf{c}}(U\rtimes\Gamma_{\chi^{\mathcal{J}}}, U, \mu)_{\mathcal{J}}.$ 
    \item[(b)] If $\Aut(X)_{\chi}=\Inn(X)$ and if $\chi$ and $\mu$ lie over the same irreducible character of $\mathbf{Z}(X)$, then $(X\rtimes \Gamma_{\chi^{\mathcal{J}}}, X, \chi)_{\mathcal{J}} \succeq_{\mathbf{c}}(U\rtimes\Gamma_{\chi^{\mathcal{J}}}, U, \mu)_{\mathcal{J}}.$ 
    \item [(c)] Suppose that $\Sigma$ is a finite group containing $X$ as a normal subgroup such that the conjugation action of $\Delta:=\mathbf{N}_{\Sigma}(U)$ on $X$ induces the subgroup $\Gamma_{\chi^{\mathcal{J}}}$. Suppose that $\chi$ has an extension $\chi^{\sharp}\in \Irr(\Sigma_{\chi})$ and $\mu$ has an extension $\mu^{\sharp}\in \Irr(\Delta_{\mu})$ such that $[ \chi^{\sharp}_{\mathbf{C}_{\Delta_{\mu}}(X)}, \mu^{\sharp}_{\mathbf{C}_{\Delta_{\mu}}(X)}]\neq 0$. Assume further that, for each $(\delta,\sigma)\in (\Gamma \times\mathcal{J})_{\chi}$, the unique linear character $\gamma\in \Irr(\Sigma_{\chi})$ with $(\chi^{\sharp})^{\delta\sigma}=\gamma\chi^{\sharp}$ also satisfies $(\mu^{\sharp})^{\delta\sigma}=\gamma\mu^{\sharp}$ after identifying $\Sigma_{\chi}/X=\Delta_{\mu}/U$. Then $(X\rtimes \Gamma_{\chi^{\mathcal{J}}}, X, \chi)_{\mathcal{J}} \succeq_{\mathbf{c}}(U\rtimes\Gamma_{\chi^{\mathcal{J}}}, U, \mu)_{\mathcal{J}}.$ 
    \item[(d)] Suppose that $\Sigma$ is a finite group containing $X$ as a normal subgroup such that the conjugation action of $\Delta:=\mathbf{N}_{\Sigma}(U)$ on $X$ induces the subgroup $\Gamma_{\chi^{\mathcal{J}}}$. Assume in addition that both $\chi$ and $\mu$ are real-valued, $[\chi_{\mathbf{Z}(X)},\mu_{\mathbf{Z}(X)}]\neq 0$, and $|\Delta_{\mu}/U|$ is odd. Then $(X\rtimes \Gamma_{\chi^{\mathcal{J}}}, X, \chi)_{\mathcal{J}} \succeq_{\mathbf{c}}(U\rtimes\Gamma_{\chi^{\mathcal{J}}}, U, \mu)_{\mathcal{J}}.$ 
    \item[(e)] Suppose that $\Aut(X)_{\chi^{\mathcal{J}}}/\Inn(X)$ is a $p$-group and $[\chi_U,\mu]$ is not divisible by $p$. Then $(X\rtimes \Gamma_{\chi^{\mathcal{J}}}, X, \chi)_{\mathcal{J}} \succeq_{\mathbf{c}}(U\rtimes\Gamma_{\chi^{\mathcal{J}}}, U, \mu)_{\mathcal{J}}.$ 
    \item[(f)] Suppose that $\Sigma$ is a finite group containing $X$ as a normal subgroup such that the conjugation action of $\Delta:=\mathbf{N}_{\Sigma}(U)$ on $X$ induces the subgroup $ \Gamma_{\chi^{\mathcal{J}}}$. Assume that for a prime $p$ the group $\Sigma/X$ has a normal $p$-complement $K/X$ such that $\mathbf{C}_{\Sigma}(X)\subseteq \mathbf{C}_{\Sigma}(K_{\chi})$ and also $\chi$ extends to a character $\chi^*\in \Irr(K_{\chi})$ such that $(\Sigma, K_{\chi}, \chi^*)_{\mathcal{J}}$ is a ${\mathcal{J}}$-triple and $(\Delta \times \mathcal{J})_{\chi}=(\Delta \times \mathcal{J})_{\chi^*}$. On the $U$ side, assume that $\mu$ extends to a character $\mu^*\in \Irr(K \cap \Delta_{\mu})$ such that $(\Delta, K\cap \Delta_{\mu}, \mu^*)_{\mathcal{J}}$ is a ${\mathcal{J}}$-triple and  $(\Delta \times \mathcal{J})_{\mu}=(\Delta \times \mathcal{J})_{\mu^*}$ . Finally, assume that $[ \chi^*_{\Delta_\mu \cap K}, \mu^*]$ is coprime to $p$. Then  $(X\rtimes \Gamma_{\chi^{\mathcal{J}}}, X, \chi)_{\mathcal{J}} \succeq_{\mathbf{c}}(U\rtimes\Gamma_{\chi^{\mathcal{J}}}, U, \mu)_{\mathcal{J}}.$ 
\end{itemize}

\end{teo}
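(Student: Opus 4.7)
The plan is to adapt the projective-representation strategy of parts (c) and (e) of Theorem \ref{crit} to the setting where extensions of $\chi$ and $\mu$ are only available on the normal $p$-complement layer $K_\chi$ (respectively $K\cap\Delta_\mu$) rather than all the way up to $\Sigma_\chi$ (respectively $\Delta_\mu$). The key observation is that the normal $p$-complement hypothesis splits the problem into a $p'$-piece, controlled by the ordinary extensions $\chi^*$ and $\mu^*$, and a $p$-piece, handled by a cohomological argument that exploits the coprimality of $[\chi^*_{\Delta_\mu\cap K},\mu^*]$ to $p$.

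First, using the extension $\chi^*$ and the $\mathcal{J}$-triple hypothesis on $(\Sigma,K_\chi,\chi^*)$, I would build a projective representation $\mathcal{P}$ of $\Sigma_\chi$ associated with $(\Sigma_\chi,X,\chi)$ whose restriction to $K_\chi$ is the ordinary representation affording $\chi^*$. By Theorem 2.1 of \cite{BKNT} the entries and the factor set $\alpha$ can be chosen in $\mathbb{Q}^{\mathrm{ab}}$; since $\mathcal{P}|_{K_\chi}$ is ordinary, $\alpha$ can be arranged to be inflated from $\Sigma_\chi/K_\chi$, which is a $p$-group by the normal $p$-complement hypothesis. The parallel construction on the $U$-side, using $\mu^*$ and its $\mathcal{J}$-triple, yields $\mathcal{P}'$ of $\Delta_\mu$ associated with $(\Delta_\mu,U,\mu)$ whose factor set $\alpha'$ is inflated from the $p$-group $\Delta_\mu/(K\cap\Delta_\mu)$. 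The scalar condition (iii) of Definition \ref{central order} on $\mathbf{C}_G(N)$ is handled using $\mathbf{C}_\Sigma(X)\subseteq\mathbf{C}_\Sigma(K_\chi)$: any such central element $c$ acts as a scalar on the ordinary representation affording $\chi^*$, and the non-zero inner product $[\chi^*_{\Delta_\mu\cap K},\mu^*]$ forces the same scalar to arise from $\mathcal{P}'$.

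Next, I would verify that $\alpha$ and $\alpha'$ agree on the overlap $H_\theta\times H_\theta$ after replacing $\mathcal{P}'$ by $\mu_0\mathcal{P}'$ for a suitable $\mu_0:\Delta_\mu\to\mathbb{C}^\times$ constant on cosets of $U$. Both factor sets are inflated from $p$-group quotients, and the coprimality of $[\chi^*_{\Delta_\mu\cap K},\mu^*]$ to $p$ allows a Gallagher-type decomposition together with an averaging argument over the $p$-group quotient to produce a 2-coboundary witnessing their equality. Condition (iv) is then deduced from Lemma \ref{1.4 NSV} applied on both sides: the hypotheses $(\Delta\times\mathcal{J})_\chi=(\Delta\times\mathcal{J})_{\chi^*}$ and $(\Delta\times\mathcal{J})_\mu=(\Delta\times\mathcal{J})_{\mu^*}$ transport the Galois-automorphism stabilizer through the extension step, so the functions $\mu_a$ and $\mu'_a$ furnished by that lemma coincide on $H_\theta$. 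The main obstacle I anticipate is the $H^2$-matching step in this paragraph: ensuring that the coboundary adjustment can be made $\mathcal{J}$-equivariantly and without spoiling the scalar condition already established on $\mathbf{C}_G(N)$. The coprimality to $p$ of the multiplicity is precisely what kills the relevant $p$-local cohomology in the needed direction, so this is the place where the hypotheses specific to (f) are used in an essential way.
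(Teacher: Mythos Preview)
Your strategy is heading in the right direction, but the paper's proof of (f) is organised differently and avoids the hand-built cohomological matching that you flag as the main obstacle. The paper first uses the butterfly theorem (Theorem~2.11 of \cite{BKNT}) to reduce the desired ordering $(X\rtimes\Gamma_{\chi^{\mathcal J}},X,\chi)_{\mathcal J}\succeq_c(U\rtimes\Gamma_{\chi^{\mathcal J}},U,\mu)_{\mathcal J}$ to the ordering $(\Sigma,X,\chi)_{\mathcal J}\succeq_c(\Delta,U,\mu)_{\mathcal J}$; you never mention this reduction, and without it your projective representations on $\Sigma_\chi$ and $\Delta_\mu$ do not yet address the statement. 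The paper then observes that the hypotheses of (f) are set up precisely so that Theorem~2.6 of \cite{BKNT} applies directly to the \emph{lifted} $\mathcal J$-triples $(\Sigma,K_\chi,\chi^*)_{\mathcal J}$ and $(\Delta,K\cap\Delta_\mu,\mu^*)_{\mathcal J}$: one checks $\Sigma=K_\chi\Delta$, $K_\chi\cap\Delta=K\cap\Delta_\mu$, $\mathbf{C}_\Sigma(K_\chi)\subseteq\Delta$, that $\Sigma_{\chi^*}/K_\chi$ is a $p$-group, that $(\Delta\times\mathcal J)_{\chi^*}=(\Delta\times\mathcal J)_{\mu^*}$, and that the multiplicity is coprime to $p$. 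Theorem~2.6 then outputs projective representations $\mathcal P,\mathcal P'$ satisfying (iii) and (iv) of Definition~\ref{central order} for these lifted triples. Finally, since $\chi^*$ extends $\chi$ and $\mu^*$ extends $\mu$, the very same $\mathcal P,\mathcal P'$ are associated with $(\Sigma_\chi,X,\chi)$ and $(\Delta_\mu,U,\mu)$, and the remaining conditions transfer using $\mathbf{C}_\Sigma(X)\subseteq\mathbf{C}_\Sigma(K_\chi)$ and $(\Delta\times\mathcal J)_\chi=(\Delta\times\mathcal J)_{\chi^*}$.

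The practical difference is that your ``$H^2$-matching'' and ``averaging over the $p$-group quotient'' are exactly the content of Theorem~2.6 of \cite{BKNT}, so you are effectively proposing to reprove that result inside the proof of (f). The paper instead recognises that passing from $X$ to $K_\chi$ (and from $U$ to $K\cap\Delta_\mu$) as the normal subgroup puts one squarely in the setting of that theorem, after which only a short bookkeeping argument is needed to descend back to $X$ and $U$. Your sketch would work if fleshed out, but the paper's route is both shorter and makes transparent why the specific hypotheses of (f) (the $\mathcal J$-triple conditions on $\chi^*,\mu^*$, the stabiliser equalities, and $\mathbf{C}_\Sigma(X)\subseteq\mathbf{C}_\Sigma(K_\chi)$) are exactly what is needed.
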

\begin{proof}
 To prove (a)-(e), one substitutes $\mathcal{G}$ by $\mathcal{J}$ in the proof of Theorem 3.2 \cite{BKNT}. We can do this because all the theorems in Section 2 of \cite{BKNT} are true for $\mathcal{J}$. So we just need to prove (f).
First we establish some group theoretical properties about $\Sigma$:
\begin{itemize}
    \item [(i)] $\Sigma=\Delta X$.
    \item [(ii)] $\Delta\cap X=U$.
    \item [(iii)] $\mathbf{C}_{\Sigma}(X) \subseteq \Delta$.
\end{itemize}
Observe that (i) follows since $U$ is intravariant in $X\trianglelefteq \Sigma$. Note that (ii) follows from $U$ being self-normalizing in $X$. To prove (iii), let $g=hx\in \mathbf{C}_{\Sigma}(X)$ with $h\in \Delta$,  $x\in X$, then $U^x=U^{hx}=U$, so $x\in \Delta$, hence $\mathbf{C}_{\Sigma}(X) \subseteq \Delta$. Note that (i) together with $\Delta$ inducing $\Gamma_{\chi^{\mathcal{J}}}$ imply that $(\Sigma, X, \chi)_{\mathcal{J}}$ is a $\mathcal{J}$-triple. Also, since $U$ is normal in $\Delta$ and $\Gamma_{\chi^{\mathcal{J}}}=\Gamma_{\mu^{\mathcal{J}}}$ it is clear that $(\Delta, U, \mu)_{\mathcal{J}}$ is a $\mathcal{J}$-triple. We claim that it is enough to show that  $(\Sigma, X, \chi)_{\mathcal{J}} \succeq_{\mathbf{c}}(\Delta, U, \mu)_{\mathcal{J}}$.\\

To see this, let $\epsilon$: $\Sigma \rightarrow \Aut(X)$ and $\hat{\epsilon}$: $X\rtimes \Gamma_{\chi^{\mathcal{J}}} \rightarrow \Aut(X)$ be defined via the conjugation action on the common normal subgroup X. $ $ Since $\Sigma=\Delta X$ and $\epsilon(\Delta)=\Gamma_{\chi^{\mathcal{J}}}$ it follows that $\epsilon(\Sigma)=\hat{\epsilon}( X\rtimes \Gamma_{\chi^{\mathcal{J}}} )$. We also have that $\hat{\epsilon}( U\rtimes \Gamma_{\chi^{\mathcal{J}}} )=\Gamma_{\chi^{\mathcal{J}}}=\epsilon(\Delta)$, where the first equality comes from the fact that conjugating $X$ by an element in $U$ is an inner automorphism of $X$ leaving $U$ invariant and fixing $\mu$, so $\hat{\epsilon}(U)\subseteq \Gamma_{\mu}= \Gamma_{\chi}\subseteq \Gamma_{\chi^{\mathcal{J}}} $. Note that $ \mathbf{C}_{ X\rtimes \Gamma_{\chi^{\mathcal{J}}}}(X)\subseteq U\rtimes \Gamma_{\chi^{\mathcal{J}}}$, to see this let $x_0\delta \in \mathbf{C}_{ X\rtimes \Gamma_{\chi^{\mathcal{J}}}}(X) $ where $x_0\in X$, $\delta \in \Gamma_{\chi^{\mathcal{J}}}$, we have that for any $x \in X$, $x=x^ {x_0\delta}=(x^{x_0})^{\delta}$, therefore $\delta=\epsilon(x_0)^{-1}$. Since $\delta \in \Gamma$, $U=U^{\delta^{-1}}=U^{x_0}$ so that $x_0\in \mathbf{N}_X(U)=U $.
Therefore, $U\mathbf{C}_{ X\rtimes \Gamma_{\chi^{\mathcal{J}}}}(X) \subseteq  U\rtimes \Gamma_{\chi^{\mathcal{J}}}  $ and all the hypothesis of the butterfly Theorem 2.11 from \cite{BKNT} are satisfied so 
$\left(X \rtimes \Gamma_{\chi^{\mathcal{J}}}, X, \chi\right)_{\mathcal{J}} \succeq_{\mathbf{c}}\left(U \rtimes \Gamma_{\chi^{\mathcal{J}}}, U, \mu\right)_{\mathcal{J}}$ as wanted.\\

To prove $(\Sigma, X, \chi)_{\mathcal{J}} \succeq_{\mathbf{c}}(\Delta, U, \mu)_{\mathcal{J}}$ we just need to focus on conditions (iii), (iv) of Definition \ref{central order}. This is because the first condition of the definition was established at the beginning of the proof, and the second condition follows immediately from the fact that $\Delta$ induces $\Gamma_{\chi^{\mathcal{J}}}$ when acting on $X$ and the fact that $(\Gamma \times \mathcal{J})_{\chi}=(\Gamma \times \mathcal{J})_{\mu}$. Let us now consider the $\mathcal{J}$-triples  $(\Sigma, K_{\chi}, \chi^*)_{\mathcal{J}}$ and $(\Delta, K\cap \Delta_{\mu}, \mu^*)_{\mathcal{J}}$, we want to apply Theorem 2.6 from \cite{BKNT}. Note that $\Sigma=K_{\chi}\Delta$, $K_{\chi}\cap \Delta=K\cap \Delta_{\chi}=K\cap \Delta_{\mu}$ and also $\mathbf{C}_{\Sigma}(K_{\chi})\subseteq \mathbf{C}_{\Sigma}(X) \subseteq \Delta $. Observe that $\Sigma_{\chi*}=\Sigma_{\chi}$, this is because $\Sigma=X\Delta$ and $(\Delta \times \mathcal{J})_{\chi}=(\Delta \times \mathcal{J})_{\chi^*}$. Moreover, note that $\Sigma/K$ is a $p$-group and $\Sigma_{\chi}\cap K=K_{\chi}$. Therefore $\Sigma_{\chi^*}/K_{\chi}=\Sigma_{\chi}/K_{\chi}$ is a $p$-group. Finally, since $(\Delta \times \mathcal{J})_{\chi^*}=(\Delta \times \mathcal{J})_{\chi}=(\Delta \times \mathcal{J})_{\mu}=(\Delta \times \mathcal{J})_{\mu^*}$ we conclude by Theorem 2.6 \cite{BKNT} that $(\Sigma, K_{\chi}, \chi^*)_{\mathcal{J}} \succeq_{\mathbf{c}}(\Delta, K\cap \Delta_{\mu}, \mu^*)_{\mathcal{J}}$. \\

By definition, we obtain two projective representations $\mathcal{P}$ and $\mathcal{P}^{\prime}$ associated with $(\Sigma_{\chi},K_{\chi},\chi^*)$, $(\Delta_{\mu}, K\cap \Delta_{\mu}, \mu^*)$ respectively, satisfying (iii), (iv) from Definition \ref{central order}. We claim that we can use this projective representations to conclude that  $(\Sigma, X, \chi)_{\mathcal{J}} \succeq_{\mathbf{c}}(\Delta, U, \mu)_{\mathcal{J}}$. Indeed, the fact that $\mathcal{P}$ is associated with $(\Sigma_{\chi},K_{\chi},\chi^*)$ automatically gives that $\mathcal{P}$ is associated with $(\Sigma_{\chi},X,\chi)$, since $\chi^*$ extends $\chi$. Similarly, since  $\mathcal{P'}$ is associated with $(\Delta_{\mu}, K\cap \Delta_{\mu}, \mu^*)$ we easily see that is also associated with $(\Delta_{\mu}, U, \mu)$. The remaining requirements from (iii), (iv) of Definition \ref{central order} follow easily from $\Delta_{\chi}=\Delta_{\chi^*}$, $\mathbf{C}_{\Sigma}(X)\subseteq \mathbf{C}_{\Sigma}(K_{\chi})$ and $(\Delta \times \mathcal{J})_{\chi}=(\Delta \times \mathcal{J})_{\chi^*}$. 
\end{proof}
\subsection{Number theoretic prerequisites} When proving that we have an $\mathcal{H}$-equivariant bijection we will make use of some elementary number theory, so one needs to recall the Gauss Sum Formula, namely, let $p>2$ be a prime, fix a choice of $i=\sqrt{-1}$ and let $\omega=e^{2\pi i/p}$ then 
    $$
    \sum_{k=0}^{p-1}\left(\dfrac{k}{p}\right) \omega^k= \begin{cases} 
      \sqrt{p} & \textrm{if   }  p\equiv 1\mod 4 \\
      
      i\sqrt{p} & \text{if   }  p \equiv 3 \mod 4 
   \end{cases}
    $$
If $n\in \mathbb{Z}$, and $p$ does not divide $n$, then 
$$\sum_{k=0}^{p-1}\left(\dfrac{k}{p}\right) \omega^{nk}=\left(\dfrac{n}{p}\right)\sum_{k=0}^{p-1}\left(\dfrac{k}{p}\right) \omega^k.$$
Also, recall the Quadratic reciprocity law, namely, if we let $p,q$ be distinct odd primes, then $\left(\dfrac{q}{p}\right)\left(\dfrac{p}{q}\right)=(-1)^{\frac{p-1}{2}\frac{q-1}{2}}$. For proofs, see for example \cite{Lang}.\\

\subsection{Some facts about \texorpdfstring{$\PSL_2(q)$}{PSL(2,q)}.}
Here we will recall some basic facts about groups of the form $\PSL_2(q)$. Let $p$ be a prime number and let $n$ be a positive integer. The group $S=\PSL_2(q)$ is simple for all such $q$ except $q=2,3$, so we ignore these cases. Also, recall that $\PSL_2(5)\cong \PSL_2(4)$ and $\PSL_2(9)\cong A_6$, the alternating group of order $360$.\\

If $q$ is even, then $\PSL_2(q)=\SL_2(q)$ and the Schur multiplier $M(S)=1$ for $q\neq 4$ and for $q=4$ we have that $M(\PSL_2(4))=C_2$. Therefore, if $X$ is the universal covering group of $S$, then, $X=S$ for $q\neq 4$ and $X=\SL_2(5)$ for $q=4$. If $q$ is odd, we have that $M(S)=C_2$ and $X=\SL_2(q)$ as long as $q\neq9$. For $q=9$, $M(A_6)=C_6$, the cyclic group of order $6$ so $X=6.A_6$.\\

Our notation for irreducible characters of $\SL_2(q)$ will be as listed in Theorems 38.1 and 38.2 of \cite{D} (depending on whether $q$ is even or odd). Recall that $|\SL_2(q)|=q(q^2-1)$ and for $q$ odd, $|\PSL_2(q)|=\frac{q(q^2-1)}{2}$. When $q$ is even, $\Aut(X)=\langle S,\alpha\rangle$ where $\alpha$ acts by taking power 2 to the entries of the matrices. Recall that $\SL_2(q)\cong\SU_2(q)\cong\Sp_2(q)$. We will make use of these isomorphisms in our proofs. One advantage of $\SL_2(q)$ is that it is very explicit. It is convenient for us to explain 3 setups that will appear later on when dealing with $\PSL_2(q)$.

-\textbf{ Setup 1:} Here we consider $X=\SL_2(q)$, then $\Aut(\SL_2(q))=\langle S, \alpha,\tau\rangle$, where $\alpha$ is a field automorphism and $\tau$ is a diagonal automorphism. Fix $\mu$ so that $\mathbb{F}_q^*=\langle\mu\rangle$, the actions can be taken as follows
$$
\begin{pmatrix}
a & b\\
c & d 
\end{pmatrix}^{\alpha}
=\begin{pmatrix}
a^p& b^p\\
c^p & d^p
\end{pmatrix}, \space \begin{pmatrix}
a & b\\
c & d 
\end{pmatrix}^{\tau}
=\begin{pmatrix}
a& b\mu^{-1}\\
c\mu  & d
\end{pmatrix}$$
It is convenient to note the following. Forget the notation for a moment, let $A$ be the group of semilinear transformations on a two dimensional vector space over $\mathbb{F}_q$. We have that $A=\SL_2(q)\rtimes\langle\tau\rangle\rtimes\langle\alpha\rangle$, where $\tau=\begin{pmatrix}
\mu & 0\\
0 & 1
\end{pmatrix}\in \GL_2(q)$, $o(\tau)=q-1$ and $\alpha$ is the semilinear transformation acting via $(\lambda_1 e_1 +\lambda_2 e_2) \cdot \alpha= \lambda_1^p e_1 +\lambda_2^p e_2 $ where $\{e_1,e_2\}$ is the canonical basis. Note that $o(\alpha)=n$. Also, $\GL_2(q)=\SL_2(q)\rtimes\langle\tau\rangle$. Denoting with an overbar reduction modulo scalars, $\bar{A}=\PGL_2(q)\rtimes\langle\bar{\alpha}\rangle$, $\PGL_2(q)=\langle \PSL_2(q),\bar{\tau}\rangle$, $|\PGL_2(q):\PSL_2(q)|=2$ and $o(\bar{\alpha})=n$. The natural map $\bar{A}\to \Aut(\SL_2(q))$ is an isomorphism  and what we previously denoted by $\tau,\alpha\in \Aut(\SL_2(q))$ are precisely the images of $\bar{\tau},\bar{\alpha}\in \bar{A}$. This explains the notation (for details see Theorem 2.1.4 and section 2.2 of \cite{KL}).
\\ We now describe a self-normalizing in $\SL_2(q)$ and intravariant subgroup. Consider the diagonal torus $T=C_{q-1}$, an explicit way to see it is via the matrix
$$
a=\begin{pmatrix}
\mu & 0\\
0 & \mu^{-1}
\end{pmatrix}\in \SL_2(q)
$$
Note that it is unique up to conjugacy. Let $\ell$ be an odd prime dividing $q-1$, we can consider $L\in\Syl_{\ell}(\SL_2(q))$ such that $L\subseteq T$. By Corollary 1.3.2 of \cite{Bon}, we have that $U:=\mathbf{N}_{\SL_2(q)}(T)=\langle a,u\rangle$ where 
$$
u=\begin{pmatrix}
0 & 1\\
-1 & 0
\end{pmatrix}\in \SL_2(q).
$$
Note that $u^2=a^{(q-1)/2}$, $a^u=a^{-1}$ so in fact $U$ is dicyclic of degree $(q-1)/2$. We will give a short review on dicyclic groups shortly. Also, by Theorem 1.4.3 of \cite{Bon}, $U=\mathbf{N}_{\SL_2(q)}(L)$, hence it is self-normalizing and intravariant in $\SL_2(q)$. In fact it is invariant under $\alpha,\tau\in \Aut(\SL_2(q))$, using the notation we just introduced. Note that in fact, $\tau,\alpha$ fix $L$, hence $\Aut(X)_L=\Aut(X)_U=\langle\bar{U},\tau,\alpha\rangle$. Define $M:=\{y\in\GL_2(q): \det(y)=\pm1\}$, let $U^{\sharp}=\mathbf{N}_M(U)$, and note that $U^{\sharp}$ is semidirect product of $U$ with the subgroup generated by 
$$
\begin{pmatrix}
1 & 0\\
0 & -1
\end{pmatrix}.
$$

When $q\equiv3$ mod $4$, we have that $\mathbf{Z}(X)=\mathbf{Z}(M)=\langle z \rangle$ and $M/\mathbf{Z}(M)\cong\PGL_2(q)$ therefore $\tau$ is induced by some element in $U^{\sharp}$. \\

 -\textbf{ Setup 2:} Here $X=\SU_2(q)$, i.e. we endow a 2 dimensional vector space $V$ over $\mathbb{F}_ {q^2}$ with a unitary form $\kappa$ and we consider the determinant one transformations preserving this form. Fix an orthonormal basis to see elements as matrices. We naturally write $\Aut(\SU_2(q))=\langle \PSU_2(q), \alpha,\tau\rangle$, where $\alpha$ is a field automorphism and $\tau$ is a diagonal automorphism. Fix $\rho$ so that $\mathbb{F}_{q^2}^{\times}=\langle\rho\rangle$, the actions can be seen as follows
$$
\begin{pmatrix}
a & b\\
c & d 
\end{pmatrix}^{\alpha}
=\begin{pmatrix}
a^p& b^p\\
c^p & d^p
\end{pmatrix}, \space \begin{pmatrix}
a & b\\
c & d 
\end{pmatrix}^{\tau}
=\begin{pmatrix}
a& b\rho^{1-q}\\
c\rho^{q-1} & d
\end{pmatrix}$$
Again we forget notation for a moment, let $A$ be the group of semilinear transformations, say $g$, on $V$ such that for all $v_1,v_2\in V$, $\kappa (v_1\cdot g,v_2\cdot g)=\lambda \kappa(v)^\sigma$ for some $\sigma\in\Aut(\mathbb{
F}_{q^2})$, and $\lambda\in \mathbb{F}_{q^2}$. We have that $\GU_2(q)=\SU_2(q)\rtimes\langle\tau\rangle$ 
 and $A=\GU_2(q)\mathbb{F}_{q^2}^{\times}\rtimes\langle\alpha\rangle$, where, $\tau=\begin{pmatrix}
\rho^{q-1} & 0\\
0 & 1
\end{pmatrix}\in \GU_2(q)$, $o(\tau)=q+1$ and $\alpha$ is the semilinear transformation acting via $(\lambda_1 e +\lambda_2 f) \cdot \alpha= \lambda_1^p e +\lambda_2^p f $ where $\{e,f\}$ is the fixed orthonormal basis, note that $o(\alpha)=2n$. Denoting with an overbar reduction modulo scalars, $\bar{A}=\PGU_2(q)\rtimes\langle\bar{\alpha}\rangle$, $|\PGU_2(q):\PSU_2(q)|=2$ and $o(\bar{\alpha})=2n$ (see section 2.3 of \cite{KL}). The natural map $\bar{A}\to \Aut(\SU_2(q))$ is a surjective homomorphism with kernel $C_2$ and what we have denoted by $\tau,\alpha\in \Aut(\SU_2(q))$ are precisely the images of $\tau,\alpha\in A$ ($u$, defined below, acts on $\SU_2(q)$ as $\alpha^n$, so $\PGU_2(q)$ injects into $\Aut(\SU_2(q))$, also, the kernel of the above map is generated by the image of $\alpha^nu^{-1}$). In this set up we will look at the non-split torus, namely we want to consider $T=\langle b\rangle$ the unique up to conjugacy $C_{q+1}$ subgroup of $\SU_2(q)$. Let $\ell$ be a prime dividing $q+1$, we take $L\in \Syl_{\ell}(X), L\subseteq T$. Let $U=\mathbf{N}_X(T)=\langle b,u\rangle$ 
 $$
 b=\begin{pmatrix}
\rho^{q-1} & 0\\
0 & \rho^{1-q}
\end{pmatrix},u=\begin{pmatrix}
0 & 1\\
-1 & 0
\end{pmatrix}\in \SU_2(q)
 $$
 
Note that $b^{q+1}=1,$ $u^2=b^{(q+1)/2}$, $b^u=b^{-1}$ so $U'$ is dicyclic of degree $(q+1)/2$.  By Theorem 1.4.3 of \cite{Bon}, $U=\mathbf{N}_{\SU_2(q)}(L)$, hence it is self-normalizing and intravariant in $\SU_2(q)$. It is invariant under $\alpha,\tau\in\Aut(\SU_2(q))$. Similarly, $\Aut(X)_L=\Aut(X)_{U}=\langle \bar{U},\tau,\alpha\rangle$. Let $M:=\{ y\in \GU_2(q): \det(y)= \pm 1\}$, and define $U^{\sharp}=\mathbf{N}_M(U)$. Note that $U^{\sharp}$ is semidirect product of $U$ with the subgroup generated by 
$$
\begin{pmatrix}
1 & 0\\
0 & -1
\end{pmatrix}\in M.
$$

When $q\equiv1$ mod $4$, we have that $\mathbf{Z}(X)=\mathbf{Z}(M)=\langle z \rangle$ and $M/\mathbf{Z}(M)\cong\PGU_2(q)$ therefore $\tau$ is induced by some element in $U^{\sharp}$. \\\\

-\textbf{ Setup 3:} Here $X=\SU_2(q)$, i.e. we endow a 2 dimensional vector space $V$ over $\mathbb{F}_ {q^2}$ with a unitary form $\kappa$ and we consider the determinant one transformations preserving this form. Fix a unitary basis, i.e., a basis $\{e,f\}$ such that $\kappa(e,e)=\kappa(f,f)=1,$ $\kappa(e,f)=1$,  to see elements as matrices. We naturally write $\Aut(\SU_2(q))=\langle \PSU_2(q), \alpha,\tau\rangle$, where $\alpha$ is a field automorphism and $\tau$ is a diagonal automorphism. Fix $\rho$ so that $\mathbb{F}_{q^2}^{\times}=\langle\rho\rangle$, the actions can be seen as follows
$$
\begin{pmatrix}
a & b\\
c & d 
\end{pmatrix}^{\alpha}
=\begin{pmatrix}
a^p& b^p\\
c^p & d^p
\end{pmatrix}, \space \begin{pmatrix}
a & b\\
c & d 
\end{pmatrix}^{\tau}
=\begin{pmatrix}
a& b\rho^{q+1}\\
c\rho^{-q-1} & d
\end{pmatrix}$$
Again we forget notation for a moment, let $A$ be the group of semilinear transformations, say $g$, on $V$ such that for all $v_1,v_2\in V$, $\kappa (v_1\cdot g,v_2\cdot g)=\lambda \kappa(v)^\sigma$ for some $\sigma\in\Aut(\mathbb{
F}_{q^2})$, and $\lambda\in \mathbb{F}_{q^2}$. We have that $\GU_2(q)=\SU_2(q).\langle\tau\rangle$ 
 and $A=\GU_2(q)\mathbb{F}_{q^2}^{\times}\rtimes\langle\alpha\rangle$, where, $\tau=\begin{pmatrix}
\rho^{-1} & 0\\
0 & \rho^q
\end{pmatrix}\in \GU_2(q)$ and $\alpha$ is the semilinear transformation acting via $(\lambda_1 e +\lambda_2 f) \cdot \alpha= \lambda_1^p e +\lambda_2^p f $, note that $o(\alpha)=2n$. Denoting with an overbar reduction modulo scalars, $\bar{A}=\PGU_2(q)\rtimes\langle\bar{\alpha}\rangle$, $|\PGU_2(q):\PSU_2(q)|=2$ and $o(\bar{\alpha})=2n$. The natural map $\bar{A}\to \Aut(\SU_2(q))$ is a surjective homomorphism with kernel $C_2$ given below by an element which is not in $\langle S,\tau\rangle=\PGU_2(q)$ (which injects by the above map into $\Aut(\SU_2(q))$) and what we have denoted by $\tau,\alpha\in \Aut(\SU_2(q))$ are precisely the images of $\tau,\alpha\in A$.
In this set up we will look at the split torus, namely we want to consider $T=\langle a \rangle$ the unique up to conjugacy $C_{q-1}$ subgroup of $\SU_2(q)$. Let $\ell$ be a prime dividing $q-1$, we take $L\in \Syl_{\ell}(X), L\subseteq T$. Let $U=\mathbf{N}_X(T)=\langle a,u\rangle$ 
 $$
 a=\begin{pmatrix}
\rho^{q+1} & 0\\
0 & \rho^{-(q+1)}
\end{pmatrix},u=\begin{pmatrix}
0 & \rho^{\frac{q+1}{2}}\\
-\rho^{\frac{-(q+1)}{2}} & 0
\end{pmatrix}\in \SU_2(q)
 $$
 
Note that $a^{q-1}=1,$ $u^2=a^{(q-1)/2}$, $a^u=a^{-1}$ so $U$ is dicyclic of degree $(q-1)/2$.  By Theorem 1.4.3 of \cite{Bon}, $U=\mathbf{N}_{\SU_2(q)}(L)$, hence it is self-normalizing and intravariant in $\SU_2(q)$. It is invariant under $\alpha,\tau\in\Aut(\SU_2(q))$. Similarly, $\Aut(X)_L=\Aut(X)_{U}=\langle \bar{U},\tau,\alpha\rangle$. Let $M:=\{ y\in \GU_2(q): \det(y)= \pm 1\}$, and define $U^{\sharp}=\mathbf{N}_M(U)$. Note that $|U^\sharp:U|=2$.
When $q\equiv1$ mod $4$, we have that $\mathbf{Z}(X)=\mathbf{Z}(M)=\langle z \rangle$ and $M/\mathbf{Z}(M)\cong\PGU_2(q)$ therefore $\tau$ is induced by some element in $U^{\sharp}$. Finally let 
$$
t=\begin{pmatrix}
-\rho^{\frac{q^2-1}{4}} & 0\\
0 & \rho^{\frac{q^2-1}{4}}
\end{pmatrix}\in \SU_2(q).
$$
One can check explicitly that $t$ acts by conjugation on $\SU_2(q)$ exactly as $\alpha^n$ therefore $t^{-1}\alpha^n$ is in the kernel of $A\to \Aut(\SU_2(q))$, moreover commutes with $\alpha$ and its projection to $\bar{A}$ is not in $\PGU_2(q)$.\\\\

\subsection{Representations of dicyclic groups.}
Since these groups will appear often we give a quick reminder of its characters. A finite group $G$ is said to be \textbf{dicyclic} of degree $n\geq 2$ if it has 2 generators $x,u$ with orders $2n,4$ respectively such that $u^2=x^n$ and $x^u=x^{-1}$. It has order $4n$, $\mathbf{Z}(G)=\{1,u^2\}$ and it has $n+3$ conjugacy classes. A complete set of representatives of the conjugacy classes is $\{1,x^l,u,u^2,xu\}$ where $1\leq l \leq n-1$. Note that the order of $u,xu$ is $4$. The group $G$ has 4 linear characters which we will denote by $\{1_G, \Sgn_G
, \mu^+, \mu^-\}$, where $1_G,\Sgn_G
$ extend the trivial character of $\langle x\rangle $ with $\Sgn_G(u)=-1$ and $\mu^+(x^l)=(-1)^l=\mu^-(x^l)$. If $n$ is even $\mu^+(u)=1$, $\mu^-(u)=-1$ and if $n$ is odd, $\mu^+(u)=i$, and $\mu^-(u)=-i$. The $n-1$ remaining characters of $G$ are of degree 2 (induced from $\langle
x\rangle
$), we denote them by $\mu_j$ and $\mu_j(x^l)=\xi^{jl}+\xi^{-jl}$ where $1\leq j\leq n-1$ and $\xi$ is a complex root of unity of order $2n$. Observe that when $n$ is odd $\Sgn$ can be distinguished from $\mu^+,\mu^-$ by its field of values and if $n\geq 4$ is even, $\Sgn$ is the unique linear character taking the value $-1$ exactly twice. 
 \\

\section{Inductive Feit condition}
For the proof of the next result, part of the ingredients that we need are extracted from Theorem 8.4 of \cite{BKNT}. The main new ingredient is Theorem \ref{crit}(f).
\begin{teo}
Let $p>2$ be any prime, $q=p^n\geq5$. Then the simple group $S=\PSL_2(q)$ satisfies the inductive Feit condition.
\end{teo}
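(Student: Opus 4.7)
The plan is to parallel the case analysis of Theorem 8.4 of \cite{BKNT}, upgrading the obstructed cases by means of Theorem \ref{crit}(f) so that the hypothesis ``$n$ is odd'' can be removed. Write $X = \SL_2(q)$ (with the exceptional case $X = 6.A_6$ when $q = 9$, handled by direct verification on the character table). Using the character list of Theorems 38.1--38.2 of \cite{D}, the irreducible characters of $X$ partition into the trivial character, the Steinberg character, the $(q-1)/2$ principal-series characters of degree $q+1$, the $(q-1)/2$ cuspidal characters of degree $q-1$, and the four exceptional characters of degree $(q\pm 1)/2$. For each $\chi$ I would pick a torus normalizer $U$ from one of Setups 1, 2, or 3 (the choice depending on $q \pmod 4$ and on whether $\chi$ is parametrized by the split or the non-split torus), and take $\mu \in \Irr(U)$ indexed by the underlying torus character. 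Conditions (i) and (ii) of Definition \ref{IFC} then follow from the Setups together with the standard parametrization of $\Irr(X)$.

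For condition (iii), the trivial, Steinberg, and generic principal-series and cuspidal characters are handled by parts (a)--(e) of Theorem \ref{crit} exactly as in \cite{BKNT}: typically $[\chi_U, \mu] = 1$ so part (a) applies, or $\Aut(X)_\chi = \Inn(X)$ and part (b) applies, with parts (d) and (e) covering the real-valued and $p$-coprime multiplicity cases. The delicate family, which is what forced the hypothesis ``$n$ odd'' in \cite{BKNT}, is the four exceptional characters of degree $(q\pm 1)/2$. Their fields of values lie in $\mathbb{Q}(\sqrt{p^*})$ with $p^* = (-1)^{(p-1)/2}p$, and when $n$ is even the field automorphism $\alpha$ acts non-trivially on $\sqrt{p^*}$ through the Gauss sum formula. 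Consequently $\Aut(X)_{\chi^{\mathcal{G}}}/\Inn(X)$ fails to be a $p$-group and part (e) of Theorem \ref{crit} is unavailable.

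The strategy to resolve this is to invoke Theorem \ref{crit}(f): take $\Sigma$ to be the image in $\Aut(X)$ of $M \rtimes \langle \alpha \rangle$ (with $M$ as in the appropriate Setup), let $K/X$ be the normal $p$-complement of $\Sigma/X$ (which modulo $X$ is generated by the diagonal automorphism and by the $p'$-part of $\langle \alpha \rangle$), and construct compatible extensions $\chi^* \in \Irr(K_\chi)$ and $\mu^* \in \Irr(K \cap \Delta_\mu)$ respecting the Galois action. The main obstacle is showing $[\chi^*_{K \cap \Delta_\mu}, \mu^*]$ is coprime to $p$; by Clifford theory this reduces to checking that the action of the $p'$-part of $\langle \alpha \rangle$ on $\chi$ and on $\mu$ is realized by a common inner automorphism on both sides. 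This last step boils down to a concrete Gauss-sum and quadratic-reciprocity computation, using the identities recalled in the Number Theoretic Prerequisites: the $p'$-power of $\alpha$ must send $\sqrt{p^*}$ to itself up to a sign controllable by a diagonal element acting identically on $\chi$ and $\mu$. Once this compatibility is verified the multiplicity equals $1$, and Theorem \ref{crit}(f) delivers the central ordering.
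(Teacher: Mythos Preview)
You have misidentified the obstructed family. The characters that forced the hypothesis ``$n$ odd'' in Theorem 8.4 of \cite{BKNT} are \emph{not} the exceptional characters of degree $(q\pm1)/2$; they are the principal-series characters $\chi_j$ of degree $q+1$. In fact your diagnosis is backwards: when $n$ is even one has $q\equiv 1\pmod 4$ and $\sqrt{q}=p^{n/2}\in\mathbb{Q}$, so the four characters of degree $(q\pm1)/2$ become rational and are disposed of easily (this is noted explicitly in the proof of Theorem \ref{teo3}). The problem with the $\chi_j$ is that they are real, $\Gamma_{\chi_j^{\mathcal G}}=\Gamma$, and the proof of 8.4 in \cite{BKNT} for these characters relied on part (d) of Theorem \ref{crit}, which requires $|\Delta_\mu/U|$ odd; when $n$ is even this fails. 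Your proposed Gauss-sum and quadratic-reciprocity computation is irrelevant here since $\mathbb{Q}(\chi_j)\subseteq\mathbb{R}$.

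Consequently your application of Theorem \ref{crit}(f) is misconfigured. The paper applies (f) with the prime $2$ (not $p$): one takes $\Sigma=M\rtimes\langle\alpha\rangle$ in Setup 3, so $\Sigma/X\cong C_2\times C_{2n}$, and $K/X$ is the normal \emph{$2$-complement}. Because $|K_\chi/X|$ is odd and $\chi_j,\mu$ are real, one invokes the canonical real extension of Lemma 2.1 of \cite{NT} to obtain $\chi^*,\mu^*$; the $\mathcal G$-triple and stabilizer conditions then follow from uniqueness of the real extension, with no number theory needed. The coprimality check is not that the multiplicity equals $1$: one has $[\chi_U,\mu]=3$, and the argument shows $[\chi^*_{K\cap\Delta_\mu},\mu^*]$ is odd (indeed equals $3$) by a short reality argument pairing complex constituents with their conjugates. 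Your plan, aimed at $p$-complements and $\sqrt{p^*}$, would not touch any of this.
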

\begin{proof}
Since $\PSL_2(5)\cong A_5$, $\PSL_2(9)\cong A_6$, by Theorem 7.17 \cite{BKNT}, we assume that $q=7$ or $q\geq 11$. Also, by Theorem 8.4 from \cite{BKNT} we may assume that $n$ is even so $q\equiv 1$ (mod 4). The universal covering group of $S$ is $X=\SL_2(q)$, we view it as $X\cong \SU_2(q)$ and we work as in setup 3. By Theorem 8.4 \cite{BKNT}, we only need to deal with the real characters $\chi_j$ of degree $q+1$ (see \cite{D}). We have that $S\cong \PSU_2(q)$ and also $\Aut(X)=
\langle S,\tau, \alpha \rangle$ where $\langle S,\tau\rangle \cong \PGU_2(q) $ and $\alpha$ acts by taking the $p$-th power to the matrix entries after fixing a unitary basis.\\

Fix $\chi=\chi_j$ of degree $q+1$. Consider the subgroup $U<X$ defined in setup 3, it is self-normalizing in X, intravariant, and $\langle \tau ,\alpha \rangle$-invariant. By the proof of Theorem 8.4 \cite{BKNT} there is a real character $\mu \in \Irr(U)$ such that $\mu(a)=\chi(a)$ and $[ \chi_U, \mu]=3$. Moreover, if we call $\Gamma=\operatorname{Aut}(X)_{U}=\langle \bar{U},\tau,\alpha\rangle $ then $(\Gamma \times \mathcal{G})_{\chi}=(\Gamma \times \mathcal{G})_{\mu}$ and
 $\Gamma_{\chi^{\mathcal{G}}}=\Gamma$. Consider $M=\{ y\in \GU_2(q)$ : $\det(y)=\pm 1 \}$, calling $U^{\sharp}=\mathbf{N}_L(U)$ , we have that $\tau$ is induced by an element in $U^{\sharp}$ . We now consider $\alpha$ as a semilinear transformation acting via $(\lambda_1 e +\lambda_2 f) \cdot \alpha= \lambda_1^p e +\lambda_2^p f $ where $\{e,f\}$ is our fixed unitary basis. We form the group $\Sigma=M \rtimes \langle \alpha \rangle \leq A$, where $A$ is as introduced in the setup 3. This way, $X$ is a normal subgroup of $\Sigma$, and the natural map $\epsilon: \Sigma \rightarrow \Aut(X)$ is surjective with $\ker (\epsilon)=\mathbf{C}_{\Sigma}(X)$ of order 4, because $|\Sigma|=4n|\SL_2(q)|$ and $|\Aut(X)|=n|\SL_2(q)|$ (see setup 3 and Table 2.1 from \cite{KL}). Let $\Delta=\mathbf{N}_{\Sigma}(U)=U^{\sharp}\rtimes \langle \alpha \rangle$. Clearly $\epsilon(\Delta)=\Gamma=\Gamma_{\chi^{\mathcal{G}}}$. Recall that $|M:X|=2$. Since $\Sigma/X=C_2 \rtimes C_{2n}$ is abelian we have that $\Sigma/X$ has a normal $2$-complement $K/X$. From the discussion in setup 3 it follows that $t^{-1}\alpha^n\in \mathbf{C}_{\Sigma}(X)
 $ and we know it commutes with $\alpha$ so it follows that $\mathbf{C}_{\Sigma}(X)\subseteq \mathbf{C}_{\Sigma}(K_{\chi})$ (of course $\{\pm 1\}\subseteq\mathbf{C}_{\Sigma}(X)\subseteq\mathbf{C}_{\Sigma}(K_{\chi})$). Since $|K/X|$ is odd, by Lemma 2.1 \cite{NT}, there is a unique real extension $\chi^*$ of $\chi$ to $K_{\chi}$, this canonical extension satisfies $\mathbb{Q}(\chi^*)=\mathbb{Q(\chi)}$. We claim that $(\Sigma, K_{\chi}, \chi^*)_{\mathcal{G}}$ is a ${\mathcal{G}}$-triple and $(\Delta \times \mathcal{G})_{\chi}=(\Delta \times \mathcal{G})_{\chi^*}$. \\

 To prove the first statement, start by observing that $(\Sigma, X,\chi)_{\mathcal{G}}$ is a $\mathcal{G}$-triple (see the first part of the proof of Theorem \ref{crit}). Now note that since $\Sigma/X$ is abelian, $K_{\chi}$ is normal in $\Sigma$. Let $g\in \Sigma$ and note that $(\chi^*)^g$ is a real character extending $\chi^g=\chi^{\sigma}$ for some $\sigma\in\mathcal{G}$. Since $\sigma$ descends to a Galois automorphism of a cyclotomic field, it commutes with complex conjugation, so it preserves reality. Thus, $\mathbb{Q}(\chi)= \mathbb{Q} (\chi^*)$ is mapped to the reals by $\sigma$. We can consider $(\chi^*)^{\sigma}\in \Irr(K_{\chi})$ and by uniqueness of real extension it follows that $(\chi^*)^g=(\chi^*)^{\sigma}$. It is now clear that $(\Sigma, K_{\chi}, \chi^*)_{\mathcal{G}}$ is a ${\mathcal{G}}$-triple. The fact that $(\Delta \times \mathcal{G})_{\chi^*}\subseteq (\Delta \times \mathcal{G})_{\chi} $ is clear since $\chi^*$ extends $\chi$. Now assume that $\chi^{(\delta,\sigma)}=\chi$ for some $(\delta,\sigma)\in (\Delta \times \mathcal{G}) $, then, as before, the character $(\chi^*)^{(\delta,\sigma)}\in \Irr(K_{\chi})$ is a real and extends $\chi$ so it must be equal to $\chi^*$.\\

 Recall that $\Delta$ induces $\Gamma=\Gamma_{\chi^{\mathcal{G}}}=\Gamma_{\mu^{\mathcal{G}}}$ so clearly $(\Delta, U,\mu)_{\mathcal{G}}$ is a $\mathcal{G}$-triple. From  the fact that $\Sigma/X\cong \Delta/U$, is easy to see that $K\cap \Delta/U$ is a normal $2$-complement of $\Delta/U$ so that $|K\cap \Delta_{\mu}/U|$ is odd. Thus $\mu$ has a canonical real extension $\mu^*\in \Irr(K \cap \Delta_{\mu})$. With the same arguments as before we easily see that $(\Delta, K\cap \Delta_{\mu}, \mu^*)_{\mathcal{G}}$ is a ${\mathcal{G}}$-triple and  $(\Delta \times \mathcal{G})_{\mu}=(\Delta \times \mathcal{G})_{\mu^*}$. \\

 Finally, we argue that $l=[ \chi^*_{\Delta_\mu \cap K}, \mu^*]$ is odd. Note that $ \chi^*_{\Delta_\mu \cap K}$ restricted to $U$ is $\chi_U$ and that $ (\mu^*)_U=\mu$. Therefore $l\leq 3$. By Corollary 2.2 of \cite{NT}, $\mu^*$ is the unique real character in $\Irr(\Delta_\mu \cap K|\mu)$. Assume that $l=2$, then since $[ \chi_U, \mu]=3$ there exist $\theta\in \Irr(\Delta_\mu \cap K)-\{\mu^*\}$ extending $\mu$ such that $[ \chi^*_{\Delta_\mu \cap K}, \theta]=1$. The fact that $ \chi^*_{\Delta_\mu \cap K}$ is real forces the complex conjugate $\Bar{\theta}\in\Irr(\Delta_\mu \cap K) $ to also be a constituent of  $\chi^*_{\Delta_\mu \cap K}$. Therefore, since $\theta \neq \Bar{\theta}$ and $\bar{\theta}$ also lies over $\mu$ (by reality of $\mu$), we have that $[ \chi_U, \mu]>3$. So $l\neq 2$. Assume now that $l=0$, then, we have that all irreducible constituents of $\chi^*_{\Delta_\mu \cap K}$ lying over $\mu$ are complex valued. This is a contradiction. Note that this forces us to have exactly $2$ constituents of  $\chi^*_{\Delta_\mu \cap K}$ lying over $\mu$, say $\theta,\bar{\theta}\in \Irr(\Delta_\mu \cap K|\mu)$ and this would force $[ \chi_U, \mu]$ to be even (note that $\mu$ appears with the same multplicity in both $\theta_U,\bar{\theta}_U$). Hence $l$ is odd (in fact is clear that it is precisely $3$). Now we can apply Theorem \ref{crit}(f) to conclude that $(X,\chi)$ satisfies the inductive Feit condition. 
\end{proof}

This, combined with Theorem $8.1$ from \cite{BKNT} implies Theorem A. We now prove the inductive Feit condition for the Suzuki groups $^{2}B_2(2^{2n+1})=\Sz(q)$ where $q=2^{2n+1}$  and $n\geq 1$. 
We first deal with the Suzuki groups that have trivial Schur multiplier.
\begin{teo}\label{suzuki trivial schur}
    Let $q=2^{2n+1}$ with $n\in \mathbb{Z}_{\geq2}$. Then the simple group $S=\Sz(q)$ satisfies the inductive Feit condition.
\end{teo}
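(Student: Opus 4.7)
Since $n\geq 2$, the Schur multiplier of $\Sz(q)$ is trivial, so the universal cover is $X=S=\Sz(q)$ itself, with $\mathbf{Z}(X)=1$ and $\Out(X)=\langle\alpha\rangle$ cyclic of odd order $2n+1$ generated by the Frobenius $\alpha\colon a\mapsto a^{2}$ on $\mathbb{F}_{q}$. The plan is to split $\Irr(X)$ according to Suzuki's original parametrization and for each family exhibit a pair $(U,\mu)$ satisfying the inductive Feit condition, in almost every case by appealing to Theorem \ref{crit}(a). Writing $r=2^{n}$, the character families are: the trivial character $1_{X}$; the Steinberg character $\mathrm{St}$ of degree $q^{2}$; the two exceptional characters $\Xi_{1},\Xi_{2}$ of degree $r(q-1)$; the $(q-2)/2$ principal series characters of degree $q^{2}+1$, indexed modulo inversion by nontrivial characters of the split torus $T_{0}=C_{q-1}$; and the discrete series characters of degrees $(q-1)(q\mp 2r+1)$, indexed modulo inversion by nontrivial characters of the non-split tori $T_{\pm}=C_{q\pm 2r+1}$.

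For the trivial, Steinberg, principal-series and discrete-series families, the natural choice is $U=\mathbf{N}_{X}(T)$ for the corresponding torus $T$. Each such $U$ is Frobenius-like of shape $T\rtimes C_{2}$, and is self-normalizing and intravariant in $X$ (for odd primes $\ell\mid|T|$ it is the Sylow $\ell$-normalizer, and for the split torus one checks the two claims directly); after choosing $\alpha$-stable maximal tori, $U$ is also $\alpha$-invariant, so $\Gamma=\Aut(X)_{U}=\Aut(X)$. Given a principal- or discrete-series $\chi$ labelled by $\lambda\in\Irr(T)$ modulo inversion, I take $\mu$ to be the $2$-dimensional irreducible character of $U$ induced from $\lambda$; this has $\mathbb{Q}(\mu)=\mathbb{Q}(\chi)$, and the $(\Gamma\times\mathcal{G})$-stabilizers of $\chi$ and $\mu$ coincide because both actions factor through the action on $\{\lambda,\lambda^{-1}\}$. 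A Mackey/Frobenius reciprocity calculation based on the explicit values of $\chi$ on $T$ gives $[\chi_{U},\mu]=1$, so Theorem \ref{crit}(a) applies. For $\chi=1_{X}$ one takes $U=\mathbf{N}_{X}(T_{0})$, $\mu=1_{U}$; for $\chi=\mathrm{St}$ one takes the same $U$ and picks the unique linear character of $U$ trivial on $T_{0}$ and $-1$ on the reflection class, which by direct restriction has multiplicity one in $\mathrm{St}|_{U}$.

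The main obstacle is the pair of exceptional characters $\Xi_{1},\Xi_{2}$: they are swapped by complex conjugation but are $\alpha$-invariant (since each is distinguished among the two by its value on a chosen element of order $4$, which lies in a quadratic imaginary extension of $\mathbb{Q}$ that $\alpha$ fixes pointwise as $\alpha$ has odd order). For these I again take $U=\mathbf{N}_{X}(T_{0})$, whose $2$-dimensional irreducible characters include a Galois-conjugate pair $\mu_{1},\mu_{2}$ with the same quadratic imaginary field of values, supported (for the non-rational part) only on elements of order $4$ coming from the reflection class. One then verifies $(\Gamma\times\mathcal{G})_{\Xi_{i}}=(\Gamma\times\mathcal{G})_{\mu_{i}}$ from the explicit stabilizers, and computes $[\Xi_{i}|_{U},\mu_{i}]=1$ by combining the rational-part contribution on $T_{0}$ with the single order-$4$ value $\Xi_{i}(u)=\pm\sqrt{-r^{2}/\,\cdot\,}$ given in Suzuki's character table. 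Applying Theorem \ref{crit}(a) once more completes this case and finishes the proof; the bulk of the technical work will be in carrying out the last multiplicity computation and matching Galois/$\alpha$ orbits on the $U$-side.
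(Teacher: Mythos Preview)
There are several genuine gaps.

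\textbf{The multiplicity-one claim is false.} For the principal series $\chi=\Omega_{s}$ of degree $q^{2}+1$ with $U=\mathbf{N}_{X}(T_{0})\cong D_{2(q-1)}$ and $\mu=\eta_{s}$ the induced degree-$2$ character, one has $\chi(x^{a})=\eta_{s}(x^{a})$ on $T_{0}\smallsetminus\{1\}$, and a direct computation gives
\[
[\chi_{U},\mu]=\frac{1}{2(q-1)}\Bigl(2(q^{2}+1)+\sum_{a=1}^{q-2}(\omega^{sa}+\omega^{-sa})^{2}\Bigr)=q+2,
\]
not $1$. The same phenomenon occurs for the discrete series (where moreover $\mathbf{N}_{X}(T_{\pm})$ has shape $T_{\pm}\rtimes C_{4}$, not $T_{\pm}\rtimes C_{2}$; the Weyl group of each non-split torus in $\Sz(q)$ has order $4$, and the relevant $\mu$'s have degree $4$). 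So Theorem~\ref{crit}(a) is unavailable here. The paper handles all of these Deligne--Lusztig characters via Theorem~\ref{crit}(d) instead: both $\chi$ and $\mu$ are real-valued, $\mathbf{Z}(X)=1$, and $|\Delta_{\mu}/U|$ divides $2n+1$, which is odd.

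\textbf{The exceptional characters cannot be handled inside $D_{2(q-1)}$.} Since $q-1$ is odd, every reflection in $U=\mathbf{N}_{X}(T_{0})\cong D_{2(q-1)}$ has order $2$ (there are no elements of order $4$ at all), and every degree-$2$ irreducible of $U$ vanishes on the reflection class while taking values $\omega^{sa}+\omega^{-sa}\in\mathbb{R}$ on $T_{0}$. Hence \emph{all} irreducible characters of $U$ are real-valued, and no $\mu\in\Irr(U)$ can have $(\Gamma\times\mathcal{G})_{\mu}=(\Gamma\times\mathcal{G})_{\Xi_{i}}$ since the $\Xi_{i}$ are non-real. The paper's remedy is to change the local subgroup entirely: take $U=\mathbf{N}_{X}(P)$ with $P\in\Syl_{2}(X)$ (the Borel). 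This $U$ has exactly three nonlinear irreducibles $\varrho_{1},\varrho_{2},\varrho_{3}$, and in fact $(\Gamma_{i})_{U}=\varrho_{i}$ restricts irreducibly, so $[\chi_{U},\mu]=1$ and Theorem~\ref{crit}(a) applies immediately.
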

\begin{proof}
Since $S$ has trivial Schur multiplier, we have that $X=S$ (see Theorem 5.1.4 of \cite{KL}). We will follow the notation of Section 1 of \cite{Bur} for the irreducible characters of $X$, and also we will use facts from that section without any further reference. It is well known that $\Aut(X)=\Inn(X)\rtimes \langle \alpha \rangle $ where $\Inn(X) \cong X$ via conjugation and $\alpha$ is the field automorphism (see Theorem 11 from \cite{Suz}), so it has order $2n+1$.  \\

$(a)$ First, we consider the characters $1,\Pi, \Gamma_1, \Gamma_2$ of X. Let $U=\mathbf{N}_X(P)$ where $P\in \Syl_2(X)$, so $U$ is of self-normalizing and intravariant in $X$. The group $U$ has order $q^2(q-1)$ and $|X:U|=q^2+1$. We now mention a few facts about the representation theory of $U$, for a proof of this we refer the reader to sections 11, 17 of \cite{Suz}. The group $U$ has exactly $q-1$ linear characters and $3$ nonlinear characters $\varrho_1, \varrho_2, \varrho_3$. The degree of $\varrho_1, \varrho_2$ is $r(q-1)/2$ where $r^2=2q$ while the degree of $\varrho_3$ is $q-1$. Up to a change of notation, we have that $(\Gamma_1)_U=\varrho_1$ and  $(\Gamma_2)_U=\varrho_2$ and of course $\varrho_3$ is rational since it is the unique irreducible character of its degree. We let $\Gamma=\Aut(X)_U$.\\
If $\chi=1$, then we take $\mu=1_U$. Both are rational and $\Gamma$-stable, also $\left[\chi_{U}, \mu\right]=1$ so we are done by Theorem \ref{crit}(a). Consider now the Steinberg character $\chi=\Pi$, then we again take $\mu=1_U$. Similarly, $\chi,\mu$ are rational and since $\chi$ is the unique character of degree $q^2$, both are also $\Gamma$-stable. Since $(1_U)^X=1+\Pi$ we are also done by Theorem \ref{crit}(a).\\
Now consider $\chi=\Gamma_i$ for $i=1,2$. Let $\mu=\varrho_i$ so that $\chi_U=\mu$. Since $\chi$ is the unique extension of $\mu$ to $X$ it follows that $(\Gamma\times \mathcal{G})_{\chi}=(\Gamma\times\mathcal{G})_{\mu}$ so we are also done by Theorem \ref{crit}(a).\\

All the remaining irreducible characters of $X$ are Deligne-Lusztig characters up to a sign. To deal with these, one could use Proposition 8.5 from \cite{BKNT}, but here we present a more elementary treatment. In fact the idea we use is the same but without referring to Deligne-Lusztig theory. Note that in the notation of \cite{Bur}, the elements $x,y,z$ are real, so they are conjugate to their inverses.\\

$(b)$ Next, we consider the characters $\Omega_s$ for $1\leq s\leq (q-2)/2$ of degree $q^2+1$. Let $H=\langle x \rangle$, we have that $H$ is a cyclic Hall subgroup of order $q-1$. Let $U=\mathbf{N}_X(H)$, then $|U:H|=2$ and $U=\mathbf{N}_X(L)$ for every nontrivial subgroup $L$ of $H$ (see section 1 of \cite{Bur}). In particular, $U$ is the normalizer of some Sylow subgroup, therefore it is self-normalizing and intravariant in $X$. Since $U\cong D_{2(q-1)}$, its character theory is transparent. The group $U$ has exactly 2 linear characters and $(q-2)/2$ degree 2 real characters. Let $\eta_s\in \Irr(U)$ for $1\leq s \leq (q-2)/2$ such that $\eta_s(x^a)=\omega^{sa}+\omega^{-sa}$, where $\omega$ is a root of unity of order $q-1$. For $\chi=\Omega_s$ consider $\mu=\eta_s$. It is clear, by looking at the values at $x^a$ (notice that $\langle x \rangle$ is characteristic in $U$), that $(\Gamma\times \mathcal{G})_{\chi}=(\Gamma\times\mathcal{G})_{\mu}$. Let $\Sigma=X\rtimes\langle\alpha \rangle\cong \Aut(X)$. Call $\Delta=\mathbf{N}_{\Sigma}(U)\cong
\Gamma=\Gamma_{\chi^{\mathcal{G}}}$. To see the last equality, view the index $s$ of $\Omega_s$ as in $\mathbb{Z}/(q-1)\mathbb{Z}$ and observe that if $\delta\in\Gamma$, then $x^{\delta}$ is a generator of $H$, so it is $x^k$ for some $k$ coprime to $q-1$, hence $\Omega_s^{\delta^{-1}}=\Omega_{ks}$ from here it is clear that $\chi^\delta=\chi^\sigma$ for $\sigma\in\mathcal{G}$ sending $\omega\to\omega^k$. Finally observe that $|\Delta_{\mu}/U|$ divides $2n+1$ and $Z(X)=1$, therefore since $\chi,\mu$ are real-valued, $(\chi,\mu)$ satisfy the inductive Feit condition by Theorem \ref{crit}(d). \\

$(c)$ We now consider the real characters  $\Theta_l$ of degree $(q-1)(q-r+1)$ for $1\leq l \leq (q+r)/4$.  Let $H=\langle y \rangle$, we have that $H$ is a cyclic Hall subgroup of order $q+r+1$. Let $U=\mathbf{N}_X(H)$, then $|U:H|=4$ and $U=\mathbf{N}_X(L)$ for every nontrivial subgroup $L$ of $H$ (see section 1 of \cite{Bur}). As above, $U$ is self-normalizing and intravariant in $X$. Again, let $\Gamma=\Aut(X)_U$. Let $\zeta$ be a primitive root of unity in $\mathbb{C}$ of order $(q+1+r)$, then, the function $\mu_l$ defined by $\mu_l(y^b)=\zeta^{lb}+\zeta^{-lb}+\zeta^{lbq}+\zeta^{-lbq}$, and $\mu_l(u)=0$ for every $u\in U-H$ defines an irreducible character of $U$ (see Section 17 of \cite{Suz}). Now, for $\chi=\Omega_l$ we consider $\mu=\mu_l$, since for $y^b\neq 1$ we have that $\chi(y^b)=-\mu(y^b)$ we can verify directly (noticing that $\langle y \rangle$ is characteristic in $U$, so $\mu_l((y^b)^\delta)=-\theta((y^b)^{\delta})$ for $1\leq b\leq (q+r)/4$) that   $(\Gamma\times \mathcal{G})_{\chi}=(\Gamma\times\mathcal{G})_{\mu}$ and arguing as in the paragraph above, we have that $(\chi,\mu)$ satisfies the inductive Feit condition by Theorem \ref{crit}(d). Finally, the treatment of the characters $\Lambda_l$ of degree $(q-1)(q+r+1)$ is analogous to what we have just done but taking $U$ to be the normalizer of $\langle z \rangle$.
\end{proof}

Note that in the proof we have only used the fact that $q\neq 8$ when we obtained the universal cover, however, everything that we did after still holds for $X=\Sz(8)$. Next, we deal with this group. Full details of the following proof are given elsewhere and are available on request.

\begin{teo}\label{sz8}
    The simple group $S=\Sz(8)$ satisfies the inductive Feit condition.
\end{teo}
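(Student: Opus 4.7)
The plan is to split $\Irr(X)$ according to the restriction to $\mathbf{Z}(X)$. Since $\Sz(8)$ is the unique Suzuki group with non-trivial Schur multiplier $M(S)\cong C_2\times C_2$, the universal covering group is $X=4.\Sz(8)$ with $|X|=4|S|$ and $\mathbf{Z}(X)\cong \mathbb{F}_2^2$, while $\Aut(X)/\Inn(X)\cong C_3$ is generated by the field automorphism $\alpha$. A standard fact (visible in the ATLAS entry for $4.\Sz(8).3$) is that the action of $\langle\alpha\rangle$ on $\mathbf{Z}(X)$ is non-trivial, hence the unique cyclic action transitively permuting the three non-identity elements; correspondingly $\alpha$ cycles the three non-trivial linear characters of $\mathbf{Z}(X)$ in a $3$-cycle. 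All four linear characters of $\mathbf{Z}(X)=C_2\times C_2$ take values in $\{\pm 1\}$, so every $\sigma\in\mathcal{G}$ fixes each of them.

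For $\chi\in\Irr(X)$ with $\mathbf{Z}(X)\leq\ker\chi$, the character $\chi$ inflates from $\Irr(S)$ and I would apply the proof of Theorem \ref{suzuki trivial schur} verbatim, replacing each subgroup $\bar U\leq S$ chosen there by its full preimage $U\leq X$ and each character $\bar\mu\in\Irr(\bar U)$ by its inflation to $U$. Such a $U$ is self-normalising in $X$ (since $\bar U$ is self-normalising in $S$ and $U\supseteq\mathbf{Z}(X)$) and is intravariant under $\Aut(X)$; moreover, as the author has already noted, none of the cases (a)--(c) treated in the proof of Theorem \ref{suzuki trivial schur} used anything about $q$ beyond the identification of the universal cover with $S$.

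For $\chi\in\Irr(X)$ with $\mathbf{Z}(X)\not\leq\ker\chi$, the transitivity of $\langle\alpha\rangle$ on the non-trivial $\mathbf{Z}(X)$-characters, combined with the rationality of all of them, forces $\Aut(X)_\chi=\Inn(X)$; hence Theorem \ref{crit}(b) becomes available. I would choose $U\leq X$ to be the preimage of one of the four subgroups $\mathbf{N}_S(P),\mathbf{N}_S(\langle x\rangle),\mathbf{N}_S(\langle y\rangle),\mathbf{N}_S(\langle z\rangle)$ used in Theorem \ref{suzuki trivial schur}, chosen $\langle\alpha\rangle$-stably, so that $\mathbf{Z}(X)\leq U$, $U$ is self-normalising and intravariant in $X$, and $\Gamma=\Aut(X)_U$ contains $\alpha$. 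The same transitivity/rationality argument applied inside $\Gamma\times\mathcal{G}$ shows that for any $\mu\in\Irr(U)$ with non-trivial central character one has $(\Gamma\times\mathcal{G})_\mu=(\Gamma\cap\Inn(X))\times\mathcal{G}_\mu$ and $(\Gamma\times\mathcal{G})_\chi=(\Gamma\cap\Inn(X))\times\mathcal{G}_\chi$; so condition (ii) of Definition \ref{IFC} collapses to the two requirements $\mu|_{\mathbf{Z}(X)}=\chi|_{\mathbf{Z}(X)}$ and $\mathbb{Q}(\mu)=\mathbb{Q}(\chi)$. Given such a $\mu$, Theorem \ref{crit}(b) yields the central ordering of Definition \ref{IFC}(iii).

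The main remaining obstacle, and where the specific structure of $\Sz(8)$ enters, is the finite verification that for each $\chi\in\Irr(X)$ with non-trivial central character at least one of the four candidate subgroups $U$ carries an irreducible character $\mu$ matching both the central character and the field of values of $\chi$. This is a routine but genuine case-by-case check in the character tables of $X=4.\Sz(8)$ and of its four distinguished subgroups, conveniently carried out using \texttt{GAP} together with its Character Table Library; by Clifford theory each $\chi_U$ has only a short list of irreducible constituents over the prescribed central character, so one merely has to exhibit one of them with the correct field of values. Because $\alpha$ cycles the three non-trivial $\mathbf{Z}(X)$-characters and the $\mathcal{G}$-action commutes with that of $\alpha$, the three groups of $\chi$ corresponding to the three non-trivial central characters can be treated simultaneously, so only a single representative per $\langle\alpha\rangle$-orbit needs to be verified explicitly.
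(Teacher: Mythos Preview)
Your proposal is correct and follows essentially the same approach as the paper: both split $\Irr(X)$ by the central character, recycle the proof of Theorem \ref{suzuki trivial schur} for characters trivial on $\mathbf{Z}(X)$, and for the remaining characters invoke Theorem \ref{crit}(b) after observing that the cyclic action of $\Out(X)\cong C_3$ on the three non-trivial (rational) central characters forces $\Aut(X)_\chi=\Inn(X)$, leaving a finite \textsc{GAP} verification to match central characters and fields of values. The paper restricts its choice of $U$ to normalizers of Sylow $p$-subgroups for $p\in\{2,7,13\}$ (omitting $p=5$), and uses the notation $X=2^2.\Sz(8)$ rather than $4.\Sz(8)$, but otherwise the two arguments coincide.
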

\begin{proof}
In this case the Schur multiplier of $S$ is the Klein Group $C_2 \times C_2$ (see Theorem 5.1.4 of \cite{KL}). Therefore $X=2^2.\Sz(8)$. Of course, $\mathbf{Z}(X)=C_2\times C_2$. The only nontrivial normal subgroups of $X$ are those generated by its central involutions. We have that $\Aut(X)\cong \Aut(S)$ and $|\Out(X)|=3$, moreover, table 6.3.1 of \cite{GLS} reveals that any outer automorphism permutes the central involutions of $X$.\\
Let $\chi\in \Irr(X)$, $\ker(\chi)$ is either $\mathbf{Z}(X)$ or $C_2\subseteq \mathbf{Z}(X)$. If $\ker(\chi)=\mathbf{Z}(X)$, so that $\chi\in \Irr(\Bar{X})=\Irr(\Sz(8))$. We can recycle work from the proof of Theorem \ref{suzuki trivial schur} to conclude that there exists $U<X$ self-normalizing and intravariant subgroup of $X$ and $\mu\in\Irr(U)$ such that $(\chi,\mu)$ satisfies the inductive Feit condition. If $\chi\in\Irr(X)$ with kernel of order 2, using computations with \textsc{GAP} (which provides the group $X$) we find for each such $\chi$ a self-normalizing and intravariant subgroup $U<X$ (which can always be taken to be the normalizer of a Sylow $p$-subgroup, $p\in\{2,7,13\}$) and an irreducible character $\mu$ of $U$ such that $(\chi,\mu)$ satisfies the inductive Feit condition using Theorem \ref{crit}(b).
\end{proof}

This concludes the proof of Theorem B.

\section{Inductive Galois-McKay condition}

Consider any simple group of the form $S=\PSL(2, p^n)$, recall that $|S|=p^n(p^n-1)(p^n+1)$ for $p$ even, and  $|S|=p^n(p^n-1)(p^n+1)/2$ for $p$ odd. From \cite{J}, \cite{R}, \cite{RS} it follows that the inductive Galois-McKay condition holds for $\ell=2$ and $\ell=p$. Thus, to completely verify that the condition holds for every prime dividing $|S|$, it remains (in the literature) to consider odd primes (say  $\ell$) dividing $q+1$ and  $q-1$. Our first goal is to prove this for $p$ odd. Then we will do it for $p=2$ and finally we will check the alternating group $A_6\cong\PSL_2(9)$.

\begin{teo}\label{teo3}
Let $p>2$ be any prime, $q=p^n$, $q\neq 9$. Then the simple group $S=\PSL_2(q)$ satisfies the inductive Galois-McKay condition for any odd prime $\ell$ dividing $q-1$.
\end{teo}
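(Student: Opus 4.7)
The plan is to work in Setup 1 with $X=\SL_2(q)$, split torus $T=\langle a\rangle$ of order $q-1$, a Sylow $\ell$-subgroup $L\leq T$, and $U=\mathbf{N}_X(T)=\mathbf{N}_X(L)$ dicyclic of degree $(q-1)/2$, so that $\Gamma:=\Aut(X)_L=\langle\overline{U},\tau,\alpha\rangle$. Since $\ell$ is odd with $\ell\mid q-1$, the degrees $1$, $q$, $q+1$, $(q+1)/2$ are coprime to $\ell$ while $q-1$ and $(q-1)/2$ are not, so from Dornhoff's tables
\[
\Irr_{\ell'}(X)=\{1_X,\ \mathrm{St}\}\cup\{\chi_i:1\leq i\leq (q-3)/2\}\cup\{\xi_1,\xi_2\}
\]
has cardinality $(q+5)/2$, matched by the $\ell'$-characters $1_U,\mathrm{Sgn}_U,\mu^+,\mu^-$ and the $(q-3)/2$ degree-$2$ characters $\mu_j$ of the dicyclic group $U$. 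I would define
\[
\Omega\colon 1_X\mapsto 1_U,\quad \mathrm{St}\mapsto\mathrm{Sgn}_U,\quad \chi_i\mapsto\mu_i,\quad \{\xi_1,\xi_2\}\mapsto\{\mu^+,\mu^-\},
\]
the principal-series matching being induced by restriction of characters of $T$ to $L$, and the exceptional matching being forced by comparing $\mathcal{H}$-stabilizers. The $\Gamma\times\mathcal{H}$-equivariance of $\Omega$ then follows because both sides' actions factor through permutations of the indexing characters of $T$: $\tau$ fixes $T$ pointwise, $\alpha$ raises to the $p$-th power, and $\mathcal{H}$ raises the prime-to-$\ell$ part of roots of unity to $\ell^f$-th powers.

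To verify the central ordering character by character, I would apply the appropriate clause of Theorem \ref{crit}. For $\chi\in\{1_X,\mathrm{St}\}$, both characters are rational with $[\chi_U,\Omega(\chi)]=1$, so part (a) applies. For a principal series $\chi_i$, induction in stages and Frobenius reciprocity give $[\chi_i|_U,\mu_i]=[\chi_i,\mathrm{Ind}_T^X\lambda_i]=3$, so part (a) is unavailable. However, $\tau$ fixes $T$ pointwise and therefore fixes every $\chi_i$, so $\Aut(X)_{\chi_i}/\Inn(X)$ is contained in $\langle\tau\rangle$ together with the cyclic subgroup of $\langle\alpha\rangle$ stabilizing $\{\lambda_i,\lambda_{-i}\}$; whenever $\Aut(X)_{\chi_i^\mathcal{H}}/\Inn(X)$ is a $2$-group, part (e) with $p=2$ finishes the argument (since the multiplicity $3$ is odd). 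In the remaining arithmetic cases I would form $\Sigma=M\rtimes\langle\alpha\rangle$ and $\Delta=U^\sharp\rtimes\langle\alpha\rangle$ (from Setup 1), choose a normal $2$-complement $K/X$ of $\Sigma/X$, extend $\chi_i$ and $\mu_i$ canonically to $K_{\chi_i}$ and $K\cap\Delta_{\mu_i}$ respectively using the uniqueness of real extensions across odd-index groups, and apply part (f). The exceptional pair $\xi_1,\xi_2$ is treated similarly, adapting the argument given for the inductive Feit condition in the preceding theorem, with part (d) sufficing whenever $|\Delta_{\mu^\pm}/U|$ is odd and both characters are real-valued.

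The main obstacle is the arithmetic bookkeeping required to align the Galois actions with the outer automorphism actions on the two sides. For the principal series, whether $\Aut(X)_{\chi_i^\mathcal{H}}/\Inn(X)$ is a $2$-group depends on whether $p$ lies in $\pm\langle\ell\rangle$ inside $(\mathbb{Z}/m)^\times$, where $m$ is the $\ell'$-part of $q-1$; for certain primes the stabilizer acquires odd-order components and one is forced into part (f). For the exceptional pair $\xi_1,\xi_2$, the values lie in a Gauss-sum extension $\mathbb{Q}(\sqrt{\epsilon p})$ with $\epsilon=\pm 1$ determined by $p\bmod 4$, and quadratic reciprocity (applied via the Gauss Sum Formula recalled in the prerequisites) governs whether the $\mathcal{H}$-action swaps or fixes the pair; this swap/fix must match the corresponding action on $\{\mu^+,\mu^-\}$, which is where the initial choice of the pairing in $\Omega$ must be carefully calibrated. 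Completing this matching while simultaneously producing compatible projective representations whose factor sets coincide on $H_\theta\times H_\theta$ and whose scalars on $\mathbf{Z}(X)=\{\pm I\}$ agree, as required by Definition \ref{central order}(iii)--(iv), constitutes the bulk of the work and is where the newly introduced part (f) of Theorem \ref{crit} does the heavy lifting.
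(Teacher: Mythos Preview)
Your setup, the bijection $\Omega$, and the equivariance argument are all essentially the paper's, but the central-ordering part contains one genuine error and two places where you miss a much simpler route.

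\textbf{The Steinberg character.} Your claim that $[\Psi_U,\Sgn_U]=1$ is false in general. A direct computation using the class structure of the dicyclic group $U$ and the fact that $u,au$ lie in the $X$-class of $a^{(q-1)/4}$ when $4\mid(q-1)$ but in the class of $b^{(q+1)/4}$ when $q\equiv 3\pmod 4$ shows
\[
[\Psi_U,\Sgn_U]=\begin{cases}1,& q\equiv 1\pmod 4,\\ 2,& q\equiv 3\pmod 4.\end{cases}
\]
So part (a) handles only the first case. For $q\equiv 3\pmod 4$ the paper forms $\Sigma=M\rtimes\langle\alpha\rangle$ with $M=\{y\in\GL_2(q):\det y=\pm1\}$, extends both $\Psi$ and $\Sgn_U$ to rational-valued characters of $\Sigma$ and $\Delta=U^\sharp\rtimes\langle\alpha\rangle$ (using the rational extended Steinberg character for $\Psi$), and then applies part (c). Part (f) is not used here.

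\textbf{The characters $\xi_1,\xi_2$.} You propose adapting the Feit argument and invoking part (d) ``whenever both characters are real-valued'', but when $q\equiv 3\pmod 4$ neither side is real: $\mathbb{Q}(\xi_i)=\mathbb{Q}(\sqrt{-p})$ and $\mathbb{Q}(\mu^\pm)=\mathbb{Q}(i)$. You do not say what to do in that case, and the Feit argument you cite (Theorem 3.1) concerns the degree-$(q+1)$ characters $\chi_j$, not the degree-$(q+1)/2$ characters $\xi_i$, so the adaptation is not automatic. The paper avoids all of this: a direct computation gives $[\xi_i|_U,\mu^\pm]=1$ in both congruence cases, so part (a) applies immediately.

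\textbf{The principal series $\chi_j$.} Your dichotomy between (e) when the outer stabilizer is a $2$-group and (f) otherwise is workable but unnecessary. The paper observes that the inductive Feit proof already established $(X\rtimes\Gamma_{\chi_j^{\mathcal G}},X,\chi_j)_{\mathcal G}\succeq_c(U\rtimes\Gamma_{\chi_j^{\mathcal G}},U,\mu_j)_{\mathcal G}$ with $\Gamma_{\chi_j^{\mathcal G}}=\Gamma$; since $\Gamma_{\chi_j^{\mathcal H}}\leq\Gamma_{\chi_j^{\mathcal G}}$, the $\mathcal H$-ordering follows by restriction directly from Definition \ref{central order}. No case division is needed.
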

\begin{proof}
Let $\ell$ be an arbitrary but fixed odd prime dividing $q-1$, note that since there are no odd primes dividing $4$, we are assuming that $q\neq5$ . The universal covering group of $S$ is $X=\SL_2(q)$, we work with notation from setup 1 so recall $\Aut(X)=\langle S,\tau, \alpha \rangle$ where $\langle S,\tau\rangle \cong \PGL_2(q) $ and $\alpha$ acts by taking the $p$-th power to the matrix entries. We will use notation from Theorem 38.1 of \cite{D}. Consider the diagonal torus (split case) $T=<a>\cong C_{q-1}$, we take $L\in \Syl_{\ell}(X), L\subseteq T$. Let $U=\mathbf{N}_X(T)=\langle a,u |$ $a^{q-1}=1,$ $u^2=a^{(q-1)/2}$, $a^u=a^{-1}\rangle$, as mentioned before, $U$ is dicyclic of degree $(q-1)/2$. Also, $U=\mathbf{N}_X(L)$ and $U$ is self-normalizing and intravariant in $X$. Moreover, $\Aut(X)_U=\langle \bar{U}, \tau, \alpha \rangle $ where $\bar{U}=U/\mathbf{Z}(X)$ and  $\Gamma := \Aut(X)_L=\langle \bar{U}, \tau, \alpha \rangle = \Aut(X)_U$. \\

Note that $\Irr_{\ell'}(X)=\{1_X,\Psi,\xi_1,\xi_2,\chi_i:$ $1\leq i\leq (q-3)/2\}$. It is also convenient to keep in mind that if $q\equiv 1 \mod 4$, then $u,au\in (a^{(q-1)/4})$ and if $q\equiv 3 \mod 4$ then $u,au\in (b^{(q+1)/4})$. Also, note that $\Irr(U)=\Irr_{\ell'}(U)$. Recall the notation introduced above for the characters of $U$ (our $a$ here corresponds to $x$ there). Following Section 15 of \cite{IMN} we give the next bijections. If $4|(q-1)$ let

\begin{center}
    $\Omega:$ $\Irr_{\ell'}(X)\longrightarrow \Irr_{\ell'}(U)$, $
    1_X \longmapsto 1_U$, $\Psi \longmapsto \Sgn_U$, $\xi_1 \longmapsto \mu^+$, $\xi_2\longmapsto \mu^-$, $\chi_i\longmapsto\mu_i.$
\end{center}
\underline{Claim 1:} $\Omega$ is $\Gamma \times \mathcal{H}$-equivariant.

To begin, we note that $1_X, \Psi$ and $1_U, \Sgn_U$ are $\Gamma\times \mathcal{H}$-stable. Indeed, $\Psi$ is the unique irreducible character of $X$ of degree $q$ and for $\Sgn$, if $(\delta,\sigma)\in\Gamma\times\mathcal{H}$, by rationality  $\Sgn^{(\delta,\sigma)}=\Sgn_U^{\delta}=\Sgn_U$. The last equality follows for example from the fact that $a^{\delta^{-1}}\in (a^l)$ (note that $\tau$ fixes $a$ and $\bar{U}$,$\alpha$ of course send $a$ to a power of itself) for some integer $l$, so $\Sgn_U^{\delta}(a)=1$. Next we prove that $\Omega$ restricted to $\{\xi_1,\xi_2\}$ is $\Gamma\times \mathcal{H}$-equivariant. To see this we start by noticing that for $\delta\in \Gamma$, then $\delta$ either fixes $\xi_i$ or interchanges them, similarly it fixes $\mu^+,\mu^-$ or in interchanges them. The same happens for $\sigma\in \mathcal{H}$. Therefore it is enough to prove that $\Gamma_{\xi}=\Gamma_{\mu}$, and $\mathcal{H}_\xi=\mathcal{H}_{\mu}$ where $\xi$ denotes any but fixed element of $\{\xi_1,\xi_2\}$ and $\mu $ denotes any but fixed element of $\{\mu^+,\mu^-\}$. The first equality was proved in Theorem 15.3 of \cite{IMN}. We now argue that $\mathcal{H}_\xi=\mathcal{H}_{\mu}$. Assume first  that $n$ is even, then the characters $\mu,\xi$ are rational so fixed by $\mathcal{H}$. Assume now that $n$ is odd. It suffices to show that $\sigma\in \mathcal{H}$ fixes $\sqrt{p}$ when $q\equiv 1 \mod 4$ and that $\sigma(i)=-i$ if and only if $\sigma(i\sqrt{p})=-i\sqrt{p}$ when $q\equiv 3 \mod 4$. In our situation, since $\ell|q-1$ and $p^n=p^{2m+1}$, we have that $\left(\dfrac{p}{\ell}\right)=1$. So, if $p\equiv 1\mod 4$ then $q \equiv 1 \mod 4$ and by Quadratic reciprocity $$\left(\dfrac{\ell}{p}\right) \left(\dfrac{p}{\ell}\right)=(-1)^{\frac{p-1}{2}\frac{\ell-1}{2}}=1.$$ Therefore we have that $\left(\dfrac{\ell^m}{p}\right)=1 $ for any positive integer $m$. So, for any $\sigma\in \mathcal{H}$, since there exists a non-negative integer $f$ such that $\sigma(e^{\frac{2\pi i}{pi}})=(e^{\frac{2\pi i}{pi}})^{\ell^f}$ and $\sigma(i)=i^{\ell^f}$, we have by the Gauss Sum formula that $\sigma(\sqrt{p})=\sqrt{p}$. Finally, if $p\equiv 3\mod 4$, we have that (recycling notation from above) $\sigma(i\sqrt{p})=-i\sqrt{p}$ if and only if $\left(\dfrac{\ell^f}{p}\right)=-1$ which is equivalent to $f$ odd and $\ell\equiv 3 \mod 4$. It is straightforward to check that $\sigma(i)=i^{\ell^f}=-i$ if and only if $f$ is odd and $\ell\equiv 3 \mod 4$, concluding this part of the proof.

To complete the proof of Claim 1 we just need to show that the restriction of $\Omega$ to $\{\chi_j|$ $1\leq j\leq (q-3)/2\}$ is $\Gamma\times\mathcal{H}$-equivariant. Let $(\delta,\sigma)\in \Gamma\times \mathcal{G}$ where $\delta\in \alpha^{-s}\langle \bar
U, \tau\rangle$ with $s\in \mathbb{Z}_{\geq0}$. Note that $\langle \bar
U, \tau\rangle$ fixes both $\chi_j,\mu_j$
, also $\chi_j(a)=\mu_j(a)$, and the subindex $j$ of $\chi_j$ (resp. $\mu_j$) is determined by the value at $a$. Note that $\chi_j(a^{p^m})=\mu_j(a^{p^m})$ for $m\geq 0$ since $q-1\nmid 2p^m$. Thus, we have that $\chi_j^{(\delta,\sigma)}(a)=\sigma(\chi_j(a^{p^s}))=\sigma(\mu_j(a^{p^s}))=\mu_j^{(\delta,\sigma)}(a)$, it follows that $\Omega(\chi_j^{(\delta,\sigma)})=\mu_j^{(\delta,\sigma)}$ proving Claim 1. \\

To verify the inductive Galois-McKay condition for $\ell$ it remains to prove the following.\\
\underline{Claim 2:} For every $\chi\in\Irr_{\ell'}(X)$ we have that
$$
(X\rtimes \Gamma_{\chi^{\mathcal{H}}}, X, \chi)_{\mathcal{H}} \succeq_{\mathbf{c}}(U\rtimes\Gamma_{\chi^{\mathcal{H}}}, U, \Omega(\chi))_{\mathcal{H}}.
$$

We will use the letter $\mu$ to denote $\Omega(\chi)$ for a fixed $\chi$. Observe that for every $\chi\in\Irr_{\ell'}(X)$ by Claim 1 we have that $(\Gamma \times \mathcal{H})_{\chi}=(\Gamma \times \mathcal{H})_{\mu}$. Let us first consider the character $\chi=1_X$, in this case it is clear that our conclusion holds by Theorem \ref{crit}(a). Let us now consider the Steinberg character $\chi=\Psi$, if $4|(q-1)$ we can directly compute $[\Psi_U,\Sgn_U]=1$ so
 we are again done by Theorem \ref{crit}(a). If $q\equiv 3 \mod 4$ we have that $[\Psi_U,\Sgn_U]=2$ we need to work a bit more. Let $M=\{y\in \GL_2(q)$ : $\det(y)=\pm 1\}$ as in the Setup 1. Then $U^{\sharp}:=\mathbf{N}_{M}(U)=U\rtimes C_2$ and we have that $\tau$ is induced by some element in $U^\sharp$ when acting by conjugation in $X$. Note that $M/X\cong U^{\sharp}/U\cong C_2$. Since $\mu=\Sgn_U$ is clearly $U^\sharp$-invariant (unique nontrivial linear rational), and we have an split extension, $\mu$ extends to a rational valued character $\mu^\sharp\in \Irr(U^{\sharp})$.
By Gallagher's Theorem (6.17 of \cite{Is}, whenever we mention Gallagher we mean this result), there are precisely two extensions of $\mu^\sharp$, $\nu\mu^\sharp$ of $\mu$ to $U^{\sharp}$, where $\nu$ denotes the nontrivial character of order 2 of $M/X\cong U^{\sharp}/U$.
Define $\Sigma=M\rtimes \langle\alpha\rangle\leq A$
, where $A$ is as in setup 1. The group $\Sigma=\Sigma_{\chi}$ contains $X$ as a normal subgroup and induces the full $\Aut(X)$ when acting on $X$. Also, $\Delta=\mathbf{N}_{\Sigma}(U)=U^{\sharp}\rtimes \langle\alpha\rangle$ induces $\Gamma_{\chi^{\mathcal{H}}}=\Gamma$. Note that $|\Sigma:M|=n=|\Delta:U^{\sharp}|$ is odd. We have that $\chi=\Psi$  extends to a rational valued character $\chi^*\in \Irr(\Sigma)$ (see Theorem A of \cite{MS}), and since $\alpha $ acts trivially on the $C_2$ part we see that $\mu^\sharp$ is $\Delta$-invariant hence $\mu^\sharp$ extends to a rational valued $\mu^*\in \Irr(\Delta)$. Clearly both extensions are $(\Delta\times \mathcal{H})_{\chi}$-invariant therefore since $\mathbf{C}_{\Delta}(X)=\{\pm1\}=\langle z\rangle$ and both characters $ \chi^*,\mu^*$ restrict to multiples of the trivial character of $\langle z \rangle$, we can conclude by Theorem \ref{crit}(c) that $(X\rtimes \Gamma_{\chi^{\mathcal{H}}}, X, \chi)_{\mathcal{H}} \succeq_{\mathbf{c}}(U\rtimes\Gamma_{\chi^{\mathcal{H}}}, U, \Omega(\chi))_{\mathcal{H}}.$\\

Now consider $\chi\in \{\xi_1,\xi_2\}$. One can easily check (doing 2 cases depending on $q\mod4$) that $[ \chi_U, \mu]=1$ so we are done by Theorem \ref{crit}(a). Finally let $\chi=\chi_j$ for some $1\leq j\leq (q-3)/2$. The proof of $\chi_j$ satisfying the inductive Feit condition implies (since the local subgroup $U$ that is used is the normalizer of unique up to conjugacy $C_{q-1}$ subgroup of $X$ and the considered $\mu$ is the adequate one) that $(X\rtimes \Gamma_{\chi_j^{\mathcal{G}}}, X, \chi_j)_{\mathcal{G}} \succeq_{\mathbf{c}}(U\rtimes\Gamma_{{\chi_j}^{\mathcal{G}}}, U, \mu_j)_{\mathcal{G}}.$ But since we have that $\Gamma=\Gamma_{{\chi_j}^{\mathcal{G}}}$ , then $\Gamma_{{\chi_j}^{\mathcal{H}}}\leq\Gamma=\Gamma_{{\chi_j}^{\mathcal{G}}}$ so we have
$(X\rtimes \Gamma_{\chi_j^{\mathcal{H}}}, X, \chi_j)_{\mathcal{H}} \succeq_{\mathbf{c}}(U\rtimes\Gamma_{{\chi_j}^{\mathcal{H}}}, U, \mu_j)_{\mathcal{H}}$ by Definition \ref{central order}. This concludes the proof of the theorem.
\end{proof}

We now deal with the remaining primes. We remark that the hardest part of the next proof is to prove the central ordering for the characters of degree $(q-1)/2$. The rest is kind of similar to the preceding proof. The problem here with the characters $\eta$ of degree $(q-1)/2$ is that after building the bijection $\Omega$, we have that $[\eta,\Omega(\eta)]=0$, so we cannot apply Theorem \ref{crit}(a). We will end up applying part (c) of that theorem. To do so, we will need to find preferred extensions of these characters. Since these characters are Weil characters, a natural thing to do is to embed our group in a larger dimensional symplectic group. Let us recall some basic facts about the representation theory of $\Sp_{2n}(p)$. We refer the reader to Section 2 and Section 4 of \cite{KT} for a more detailed discussion of what we will mention now. \\

Fix $p$ an odd prime and $n\geq1$ an integer (if $p=3$, assume $n>1$). The nontrivial irreducible characters of $\Sp_{2n}(p)$ of smallest degree are the so-called Weil characters; there are exactly four of them (see Theorem 5.2 of \cite{TZ}), two of smaller degree $(p^n-1)/2$, which we will denote by $\Phi_1,\Phi_2$ and two of larger degree $(p^n+1)/2$ which we will denote by $\Psi_1,\Psi_2$ (not to confuse with the Steinberg character of $\SL_2(q)$, which we have denoted by $\Psi$). The field of values of all these characters is $\mathbb{Q}(\sqrt{\epsilon_pp})$ where $\epsilon_p=(-1)^{\frac{p-1}{2}}$ (see Lemma 13.5 of \cite{G}). The following is a key fact for us. Let $\Sigma=\SL_2(q)\rtimes\langle\alpha\rangle$ as in setup 1, recall $\alpha$ is a field automorphism of order $n$ acting by taking power $p$ to the entries. Fix $\chi=\eta_i$ one of the two characters of $\SL_2(q)$ of degree $(q-1)/2$. Then there exists an embedding $$
\Sigma \xhookrightarrow{} \Sp_{2n}(p)
$$
in a way that $\chi$ is a restriction of a Weil character of $\Sp_{2n}(p)$. This is essentially Lemma 4.3 of \cite{KT}, and we recommend the reader to take into account the warning in \cite{KT} before this lemma.
\begin{teo}\label{GM odd}
Let $p>2$ be any prime, $q=p^n$, $q\neq 9$. Then the simple group $S=\PSL_2(q)$ satisfies the inductive Galois-McKay condition for any odd prime $\ell$ dividing $q+1$.
\end{teo}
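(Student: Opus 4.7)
The plan is to mirror the structure of the proof of Theorem \ref{teo3}, working now in setup 2 with $X \cong \SU_2(q)$ and the non-split torus $T = \langle b \rangle \cong C_{q+1}$. I take $L \in \Syl_{\ell}(X)$ with $L \subseteq T$ and $U = \mathbf{N}_X(T) = \langle b, u \rangle$, which is dicyclic of degree $(q+1)/2$, self-normalizing and intravariant in $X$, with $\Gamma := \Aut(X)_L = \Aut(X)_U = \langle \bar U, \tau, \alpha \rangle$. Since $\ell$ is odd and $\ell \mid q+1$, we have $\ell \mid (q+1)/2$ and $\ell \nmid q-1$, so Dornhoff's tables in \cite{D} give
\[
\Irr_{\ell'}(X) = \{1_X, \Psi, \eta_1, \eta_2\} \cup \{\theta_j : 1 \le j \le (q-1)/2\},
\]
with $\eta_i$ of degree $(q-1)/2$ and $\theta_j$ of degree $q-1$, while $\Irr(U) = \Irr_{\ell'}(U)$ because every irreducible character of $U$ has degree $1$ or $2$. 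I define the bijection $\Omega$ by $1_X \mapsto 1_U$, $\Psi \mapsto \Sgn_U$, $\{\eta_1, \eta_2\} \mapsto \{\mu^+, \mu^-\}$, and $\theta_j \mapsto \mu_j$. Its $\Gamma \times \mathcal{H}$-equivariance is proved along the lines of Claim~1 of Theorem \ref{teo3}: the Galois stabilizers in $\mathcal{H}$ of $\{\eta_1, \eta_2\}$ and of $\{\mu^+, \mu^-\}$ agree via the Gauss sum formula and quadratic reciprocity, now applied with $p \equiv -1 \pmod{\ell}$; the $\theta_j \leftrightarrow \mu_j$ equivariance follows because $\theta_j(b) = \mu_j(b)$ and $\langle b \rangle$ is characteristic in $U$; and $1_X, \Psi, 1_U, \Sgn_U$ are $\Gamma \times \mathcal{G}$-fixed.

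For the central ordering, the easy cases are $\chi \in \{1_X, \theta_j\}$ and typically $\chi = \Psi$: a direct computation from the character table gives $[\chi_U, \Omega(\chi)] = 1$, so Theorem \ref{crit}(a) applies. In the event that $[\Psi_U, \Sgn_U] > 1$, I replicate the treatment of $\Psi$ in Theorem \ref{teo3}, building $\Sigma = M \rtimes \langle \alpha \rangle$ with $M = \{y \in \GU_2(q) : \det(y) = \pm 1\}$, producing a rational extension $\chi^{*} \in \Irr(\Sigma)$ of $\Psi$ via Theorem~A of \cite{MS} and a rational extension $\mu^{*} \in \Irr(\Delta)$ of the $U^{\sharp}$-invariant rational extension of $\Sgn_U$ via Gallagher, then invoking Theorem \ref{crit}(c).

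The main obstacle is $\chi = \eta_i$, where $[\eta_{i,U}, \mu^{\pm}] = 0$, so Theorem \ref{crit}(a) is unavailable. I plan to apply Theorem \ref{crit}(c) using the Weil embedding: by Lemma~4.3 of \cite{KT}, there is an embedding $\iota : X \rtimes \langle \alpha \rangle \hookrightarrow \Sp_{2n}(p)$ under which $\eta_i$ is the restriction to $X$ of one of the two Weil characters $\Phi_{1,2}$ of $\Sp_{2n}(p)$ of degree $(p^n - 1)/2$, with values in $\mathbb{Q}(\sqrt{\epsilon_p p})$. Restricting $\Phi_i$ along $\iota$ yields a canonical extension $\chi^{\sharp}$ of $\eta_i$ to $(X \rtimes \langle \alpha \rangle)_{\eta_i}$, which then extends to $\Sigma_{\eta_i}$ via Gallagher because $\Sigma/X$ is abelian (here $\Sigma = M \rtimes \langle \alpha \rangle$). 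On the $U$-side, I extend $\mu^{\pm}$ to some $\mu^{\sharp} \in \Irr(\Delta_{\mu^{\pm}})$, again using that $\Delta_{\mu^{\pm}}/U$ is abelian. The inner-product condition $[\chi^{\sharp}_{\mathbf{C}_{\Delta_\mu}(X)}, \mu^{\sharp}_{\mathbf{C}_{\Delta_\mu}(X)}] \neq 0$ reduces to agreement on the central involution $z \in \mathbf{Z}(X)$, which holds because $\eta_i$ and $\mu^{\pm}$ lie over the same central character. The genuinely hard part, and the main obstacle I anticipate, is the $\gamma$-equivariance condition of Theorem~\ref{crit}(c): for each $(\delta, \sigma) \in (\Gamma \times \mathcal{H})_{\eta_i}$, the unique linear character of $\Sigma_{\chi}/X = \Delta_{\mu}/U$ twisting $\chi^{\sharp}$ by $(\delta, \sigma)$ must coincide with the one twisting $\mu^{\sharp}$. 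The Weil-character origin of $\chi^{\sharp}$ makes the action of the outer automorphisms on it explicit through the standard theory of the Weil representation, and by adjusting $\mu^{\sharp}$ by a suitable sign character of the $C_2$ factor $M/X$ if necessary, the two linear twists should match.
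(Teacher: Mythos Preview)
Your overall architecture matches the paper's: same setup, same bijection, same use of Theorem~\ref{crit}(a) for $1_X$ and the $\theta_j$, and the Weil embedding for $\eta_i$. However, there are two places where you underestimate the work, and in one of them there is a genuine gap.

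\textbf{The $\Psi$ case.} The problematic situation is not $[\Psi_U,\Sgn_U]>1$ but $[\Psi_U,\Sgn_U]=0$, which occurs when $4\mid(q-1)$. You cannot simply replicate the treatment in Theorem~\ref{teo3}: there $\mathbf{C}_\Delta(X)=\mathbf{Z}(X)$, but in setup~2 with $\Sigma=M\rtimes\langle\alpha\rangle$ one has $\mathbf{C}_\Delta(X)=\langle u^{-1}\alpha^n\rangle\cong C_4$. The inner-product condition in Theorem~\ref{crit}(c) is then a condition on $C_4$, not just on $z$, and the paper has to make a nontrivial choice of $\mu^*$ (sending $\alpha$ to a $2s$-th root of unity, where $n=sm$ with $s$ the $2$-part) and check $\mathcal{H}$-invariance of that root of unity using $\ell\equiv 1\pmod{2s}$.

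\textbf{The $\eta_i$ case.} This is where the real gap lies. You write that the $\gamma$-equivariance ``should match'' after adjusting $\mu^\sharp$ by a sign character of $M/X$, but $\tau$ swaps $\eta_1,\eta_2$, so $M/X$ does not even sit inside $\Sigma_\chi/X$; the relevant quotient is cyclic of order~$n$, and the question is which character of $C_n$ records the Galois twist on each side. The paper separates $n$ odd from $n$ even. For $n$ odd it uses unique real (resp.\ $\mathbb{Q}(i)$-valued, resp.\ $\mathbb{Q}(\sqrt{\epsilon_p p})$-valued) extensions on both sides and is relatively short. For $n$ even the argument is substantial and you have not supplied its ingredients: one must (i) prove that the Weil extension $\chi^*=\Phi_1|_\Sigma$ still has field of values exactly $\mathbb{Q}(\sqrt{\epsilon_p p})$, which is not automatic and uses Theorem~3.5 of \cite{KT2} to find $h$ with $|w(h)|^2=p^m$; (ii) show that the linear character $\lambda\in\Irr(C_n)$ with $(\chi^*)^\sigma=\lambda\chi^*$ for nontrivial $\sigma$ is the unique order-$2$ character (this needs a separate determinantal argument when $p=3$ and $3\mid n$); (iii) prove the number-theoretic claim that $\bigl(\tfrac{\ell}{p}\bigr)=1$ iff $\ell\equiv 1\pmod{4s}$ and $\bigl(\tfrac{\ell}{p}\bigr)=-1$ iff $\ell\equiv 2s+1\pmod{4s}$; and (iv) construct $\mu^*$ as the unique $\mathbb{Q}_{4s}$-valued extension (via \cite{Nav}) of a carefully chosen $\mu^\sharp\in\Irr(\Delta_0)$ with $\mu^\sharp(\hat\alpha^m)=\xi_{4s}$, so that its Galois twist is governed by the same order-$2$ character of $C_n$. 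Steps (i)--(iv) are exactly the content you are waving away; without them Theorem~\ref{crit}(c) cannot be invoked. (A minor slip: $\theta_j(b)=-\mu_j(b)$, not $\mu_j(b)$, though this does not affect the equivariance argument.)
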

\begin{proof}
Let $\ell$ be an arbitrary but fixed odd prime dividing $q+1$. The universal covering group of $S$ is $X=\SL_2(q)$, we view it as $X=\SU_2(q)$ and work with notation from setup 2.  We have that $S\cong \PSU_2(q)$ and also $\Aut(X)=
\langle S,\tau, \alpha \rangle$ where $\langle S,\tau\rangle \cong \PGU_2(q) $ and $\alpha$ acts by taking the $p$-th power to the matrix entries after fixing an orthonormal basis. The automorphism $\alpha$ has order $2n$, also in $\Out(X)$ has order $n$ (recall that raised to power $n$ acts as conjugation by $u\in \SU_2(q)$). \\ For the character table we will use notation from Theorem 38.1 of \cite{D}. Consider the non-split torus $T=<b>\cong C_{q+1}$, which is the unique up to conjugacy cyclic subgroup of order $q+1$
. We take $L\in \Syl_{\ell}(X), L\subseteq T$. Let $U=\mathbf{N}_X(T)=\langle b,u |$ $b^{q+1}=1,$ $u^2=b^{(q+1)/2}$, $b^u=b^{-1}\rangle$, recall $U$ is dicyclic of degree $(q+1)/2$ (so now, when $4|(q-1)$ we will have complex valued linear characters in $U$). Also recall that, $U=\mathbf{N}_X(L)$ so it is self-normalizing and intravariant in $X$. Finally, recall that $\Gamma := \Aut(X)_L=\langle \bar{U}, \tau, \alpha \rangle = \Aut(X)_U$  where $\bar{U}=U/\mathbf{Z}(X)$.\\

Note that $\Irr_{\ell'}(X)=\{1_X,\Psi,\eta_1,\eta_2,\theta_i:$ $1\leq i\leq (q-1)/2\}$. It is also convenient to keep in mind that if $q\equiv 1 \mod 4$, then $u,bu\in (a^{(q-1)/4})$ and if $q\equiv 3 \mod 4$ then $u,bu\in (b^{(q+1)/4})$. Also, note that $\Irr(U)=\Irr_{\ell'}(U)$. Recall the notation introduced above for the characters of $U$ (our $b$ here corresponds to $x$ there). We now give the next bijection. 

\begin{center}
    $\Omega:$ $\Irr_{\ell'}(X)\longrightarrow \Irr_{\ell'}(U)$, $
    1_X \longmapsto 1_U$, $\Psi \longmapsto \Sgn_U$, $\eta_1 \longmapsto \mu^+$, $\eta_2\longmapsto \mu^-$, $\theta_i\longmapsto\mu_i.$
\end{center}
\underline{Claim 1:} $\Omega$ is $\Gamma \times \mathcal{H}$-equivariant.

To begin, we note that $1_X, \Psi$ and $1_U, \Sgn_U$ are $\Gamma\times \mathcal{H}$-stable. Indeed, $\Psi$ is the unique irreducible character of $X$ of degree $q$ and for $\Sgn_U$, if $(\delta,\sigma)\in\Gamma\times\mathcal{H}$, by rationality  $\Sgn^{(\delta,\sigma)}=\Sgn_U^{\delta}=\Sgn_U$. The last equality follows for example since $b^{\delta^{-1}}\in (b^m)$ for some integer $m$, so $\Sgn_U^{\delta}(b)=1$. Next we prove that $\Omega$ restricted to $\{\eta_1,\eta_2\}$ is $\Gamma\times \mathcal{H}$-equivariant. To see this we start by noticing that for $\delta\in \Gamma$, then $\delta$ either fixes $\eta_i$ or interchanges them, similarly it fixes $\mu^+,\mu^-$ or it interchanges them. The same happens for $\sigma\in \mathcal{H}$. Therefore it is enough to prove that $\Gamma_{\eta}=\Gamma_{\mu}$, and $\mathcal{H}_\eta=\mathcal{H}_{\mu}$ where $\eta$ denotes any but fixed element of $\{\eta_1,\eta_2\}$ and $\mu$ denotes any but fixed element of $\{\mu^+,\mu^-\}$. The first equality follows from Theorem 15.3 of \cite{IMN}. We argue as in the previous proof that $\mathcal{H}_\eta=\mathcal{H}_{\mu}$. \\
If $n$ is even then $\eta$ is rational so fixed by $\mathcal{H}$. Now $\mu$ is not rational, however $\sigma\in\mathcal{H}$ fixes it if and only if $\sigma(i)=i$. It is enough to show that $4|(\ell-1)$. To see this write $n=2m$ and use the fact that $p^{2m}\equiv -1 \mod \ell$. From this $(-1)^{(\ell-1)/2}=\left(\dfrac{-1}{\ell}\right)=1$. \\
Assume now that $n$ is odd. Fix $\sigma\in\mathcal{H}$, consider a non-negative integer $f$ such that $\sigma(e^{\frac{2\pi i}{pi}})=(e^{\frac{2\pi i}{pi}})^{\ell^f}$ and $\sigma(i)=i^{\ell^f}$
If $q\equiv 1 \mod 4$, we have to show that $\sigma(\sqrt{p})=\sqrt{p}$ if and only if $\sigma(i)=i$. If $q\equiv 3 \mod 4$, we have to show that $\sigma(i\sqrt{p})=i\sqrt{p}$. We need to prove the same for both cases so we will only worry about $p\mod 4$. Assume first that $4|(p-1)$, from the Gauss Sum formula we get that $\sigma(\sqrt{p})=\sqrt{p}$ if and only if $\left(\dfrac{\ell^f}{p}\right)=1$. Our hypothesis easily yield $\left(\dfrac{p}{\ell}\right)=(-1)^{\frac{\ell-1}{2}}$, so using that $(p-3)/2$ is odd we get $\left(\dfrac{\ell}{p}\right)=(-1)^{\frac{\ell-1}{2}}$ via Quadratic reciprocity. Therefore, $\sigma(\sqrt{p})=\sqrt{p}$ if and only if $4|(\ell^f-1)$, which is equivalent to $\sigma(i)=i$ as wanted. If $4|(p-3)$ the same computations show via the Gauss Sum formula that $\sigma(i\sqrt{p})=i\sqrt{p}$.
To complete the proof of Claim 1 we just need to show that the restriction of $\Omega$ to $\{\theta_j|$ $1\leq j\leq (q-1)/2\}$ is $\Gamma\times\mathcal{H}$-equivariant. Let $(\delta,\sigma)\in \Gamma\times \mathcal{G}$ where $\delta\in \alpha^{-s}\langle \bar
U, \tau\rangle$ with $s\in \mathbb{Z}_{\geq0}$. Note that $\langle \bar
U, \tau\rangle$ fixes both $\theta_j,\mu_j$ 
, also $\theta_j(b)=-\mu_j(b)$, and the subindex $j$ of $\theta_j$ (resp. $\mu_j$) is determined by the value at $b$. Note that $\theta_j(b^{p^m})=-\mu_j(b^{p^m})$ for $m\geq 0$ since $q+1\nmid 2p^m$. Thus, we have that $\theta_j^{(\delta,\sigma)}(b)=\sigma(\chi_j(b^{p^s}))=\sigma(-\mu_j(b^{p^s}))=-\mu_j^{(\delta,\sigma)}(b)$, it follows that $\Omega(\theta_j^{(\delta,\sigma)})=\mu_j^{(\delta,\sigma)}$ proving Claim 1. \\

To verify the inductive Galois-McKay condition for $\ell$ it remains to prove the following.\\
\underline{Claim 2:} For every $\chi\in\Irr_{\ell'}(X)$ we have that
$$
(X\rtimes \Gamma_{\chi^{\mathcal{H}}}, X, \chi)_{\mathcal{H}} \succeq_{\mathbf{c}}(U\rtimes\Gamma_{\chi^{\mathcal{H}}}, U, \Omega(\chi))_{\mathcal{H}}.
$$

 We will use the letter $\mu$ to denote $\Omega(\chi)$ for a fixed $\chi$. Observe that for every $\chi\in\Irr_{\ell'}(X)$ by Claim 1 we have that $(\Gamma \times \mathcal{H})_{\chi}=(\Gamma \times \mathcal{H})_{\mu}$. Assume that $\chi=1_X$, then clearly $[\chi,\Omega(\chi)]=1$, hence we are done by Theorem \ref{crit}(a). Now take $\chi=\Psi$, by direct computation if $4|(q-3)$ we have that $[\Psi,\Omega(\Psi)]=1$ so we are done again. However, if $4|(q-1)$ we have that $[\Psi,\Omega(\Psi)]=0$. Let $M=\{y\in\GU_2(q)$ : $\det(y)=\pm 1\}$ as in setup 2.  Then $U^\sharp:=\mathbf{N}_{M}(U)=U\rtimes C_2$, and $\tau$ is induced by some element in $U^\sharp$ when acting by conjugation in $X$. Note that $\mu=\Sgn_U$ is $U^{\sharp}$-invariant, and since the extension is split, $\mu$  extends naturally to a rational valued character $\mu^\sharp \in U^\sharp$ which takes value 1 in $C_2\leq U^\sharp$. Now consider  the group $\Sigma=M\rtimes \langle \alpha \rangle \leq A$, in the notation introduced in setup 2. The group $\Sigma$ induces the full $\Aut(X)$ while acting on $X$. Then $\Delta=\mathbf{N}_{\Sigma}(U)=U^\sharp\rtimes  \langle \alpha \rangle $ induces $\Gamma=\Gamma_{\chi}$. Note that by Theorem A of \cite{MS}, the Steinberg character $\Psi$ extends to a rational valued character, say, $\chi^*\in \Irr(\Sigma)$ which is clearly $(\Delta\times \mathcal{H})-$invariant hence we need to find a $(\Delta\times \mathcal{H})-$invariant extension of $\mu$ to a  character $\mu^*\in\Irr(\Delta)$ such that$[\chi^*_{\mathbf{C}_{\Delta}(X)},\mu^*_{\mathbf{C}_{\Delta}(X)}]\neq 0$. If so
 we will conclude that $(X\rtimes \Gamma_{\chi^{\mathcal{H}}}, X, \chi)_{\mathcal{H}} \succeq_{\mathbf{c}}(U\rtimes\Gamma_{\chi^{\mathcal{H}}}, U, \Omega(\chi))_{\mathcal{H}}$ by Theorem \ref{crit}(c).
 Note that since $\alpha$ acts trivially on the $C_2$ part of $U^\sharp$, $\mu^\sharp$ is $\Delta$-invariant. Since the extension to $\Delta$ is split, we have control over the extensions of $\mu^\sharp$ to $\Delta$. Now note (see setup 2) that $\mathbf{C}_{\Delta}(X)=\langle u^{-1}\alpha^n\rangle\cong C_4$. It contains $\mathbf{Z}(X)$ and $(\chi^*)_{\mathbf{Z}(X)}=q1_{\mathbf{Z}(X)}$ hence the only possible irreducible constituents of $(\chi^*)_{\mathbf{C}_{\Delta}(X)}$ are the two rational characters $1_{C_4}$ and $\rho$ of $C_4$. If $[(\chi^*)_{C_4},\rho]\neq 0$ we consider by Gallagher's correspondence $\mu^*\in \Irr(\Delta)$ such that $\mu^*(\alpha)=1$. In this case, since $\mu(u)=-1$, we have that $(\mu^*)_{C_4}=\rho\in\Irr(C_4)$ so $[\chi^*_{\mathbf{C}_{\Delta}(X)},\mu^*_{\mathbf{C}_{\Delta}(X)}]\neq 0$. Note that $\mu^*$ is clearly $(\Delta\times \mathcal{H})-$invariant so we are done in this case. On the other hand assume that $[(\chi^*)_{C_4},\rho]= 0$, it is clear that $[(\chi^*)_{C_4},1_{C_4}]\neq 0$. It is enough to find a $\mathcal{H}$-invariant extension of $\mu^\sharp$ to $\Delta$ such that $\mu^*(\alpha^n)=-1$ (if so $(\mu^*)_{C_4}=1_{C_4}$). To find this, it is enough to apply Gallagher to obtain an extension $\mu^*\in\Irr(\Delta)$ such that if $n=ms$ with $s=2^k$ and $2\nmid m$, $\mu^*(\alpha)=\xi_{2s}$ where $\xi_{2s}$ is a root of unity of order $2s$. In this case, $\mu^*(\alpha^n)=-1$ and we just need to show that the extension is invariant under $\sigma\in\mathcal{H}$. To see this note that since $\ell$ is odd, $\ell|p^{sm}+1$ implies $\ell\equiv 1$mod$(2s)$, thus $\sigma(\xi_{2s})=\xi_{2s}$ as wanted.\\

Now, let $\chi=\theta_j$ for some $1\leq j\leq (q-1)/2$. By the proof of  Theorem 8.4 \cite{BKNT}, in which it is proved that  $\theta_j$ satisfies the inductive Feit condition, since it uses the normalizer of the unique (up to conjugation) $C_{q+1}$ subgroup of $X$ and the precise $\mu$, it follows that for this $U$ we have that $(X\rtimes \Gamma_{\theta_j^{\mathcal{G}}}, X, \theta_j)_{\mathcal{G}} \succeq_{\mathbf{c}}(U\rtimes\Gamma_{{\theta_j}^{\mathcal{G}}}, U, \mu_j)_{\mathcal{G}}.$ Note that $\bar{U},\tau$ fix $\theta_j$ and it is easy to see by acting on $b$ that $\theta_j^{\alpha }=\theta_j^{\sigma}$ for some $\sigma\in \mathcal{G}$ hence $\Gamma=\Gamma_{{\theta_j}^{\mathcal{G}}}$ . So $\Gamma_{{\theta_j}^{\mathcal{H}}}\leq\Gamma=\Gamma_{{\theta_j}^{\mathcal{G}}}$ so we have that 
$(X\rtimes \Gamma_{\theta_j^{\mathcal{H}}}, X, \theta_j)_{\mathcal{H}} \succeq_{\mathbf{c}}(U\rtimes\Gamma_{{\theta_j}^{\mathcal{H}}}, U, \mu_j)_{\mathcal{H}}$ by Definition \ref{central order}. \\

Finally, consider $\eta=\chi\in \{\eta_1,\eta_2\}$. One can easily check (doing 2 cases depending on $q\mod4$) that $[ \chi_U, \mu]=0$. We start by assuming that $n$ is odd, $q\equiv 1 \mod 4$ and $\eta_i^{\mathcal{H}}=\{\eta_1,\eta_2\}$. Let, as in setup 2,  $M=\{ y\in \GU_2(q)$ : $\det(y)=\pm1\}$, which induces $\tau$ when acting on $X$  and consider $\Sigma=M\rtimes \langle\alpha^2\rangle\leq A$. Since $n$ is odd, it induces the full $\Aut(X)$ while acting on $X$ by conjugation. Note in this case that $\mathbf{C}_{\Sigma}(X)=\mathbf{Z}(X)=\langle z \rangle$. Then, $\Delta=\mathbf{N}_{\Sigma}(U)$ induces $\Gamma=\Gamma_{\chi_i^{\mathcal{H}}}$, where $\chi_i=\eta_i$. We have that $\Sigma_{\chi_i}=X\rtimes \langle\alpha^2\rangle$ and $\Delta_{\mu_i}= \langle U,\alpha^2\rangle$, also note that $\Delta_{\mu}/U\cong C_n$. We let $\chi_i^*$ be the unique real extension to $\Sigma_{\chi_i}$, using that $n$ is odd. If $(\delta,\sigma)\in (\Delta\times\mathcal{H})_{\chi_i}$, since $\sigma$ preserves reality (descends to automorphism of cyclotomic field so commutes with complex conjugation), by uniqueness, it is clear that $(\chi^*)^{(\delta,\sigma)}=\chi^*$. In this case, for $\mu$, since  $\Delta_{\mu}/U= C_n$ and $n$ is odd, we appeal to Theorem A of \cite{Nav}, to find $\mu^*$ the unique $\mathbb{Q}(i)$-valued character of $\Delta_{\mu}$ extending $\mu$. Since $\sigma\in\mathcal{H}$ descends to an automorphism of $\mathbb{Q}(i)$, it is clear (by uniqueness) that $(\mu^*)^{(\delta,\sigma)}=\mu^*$ for any $(\delta,\sigma)\in (\Delta\times\mathcal{H})_{\chi}= (\Delta\times\mathcal{H})_{\mu}$. Since both $\chi^*$, $\mu^*$ restricted to $\mathbf{Z}(X)$ are multiples of the nontrivial irreducible character of $\mathbf{Z}(X)$,  we are  done by Theorem \ref{crit}(c). If we had $\eta_i^{\mathcal{H}}=\{\eta_i\}$ we would do the same argument but with $\Sigma=X\rtimes \langle \alpha^2 \rangle$, so that $\mathbf{N}_{\Sigma}(U)$ induces $\Gamma_{\chi^{\mathcal{H}}}$ and we can apply Theorem \ref{crit}(c).\\

It is convenient for us to go back to $\SL_2(q)$, so formally, fix an isomorphism $\theta:\SU_2(q)\to\SL_2(q)$ so that our $U\subseteq \SU_2(q)$ is mapped to the normalizer of a (unique up to conjugacy) $C_{q+1}$ subgroup of $\SL_2(q)$, call it $U$ again. 
Rename $X=\SL_2(q)$, it is enough to prove that $(X\rtimes \Gamma_{\eta^{\mathcal{H}}}, X, \eta)_{\mathcal{H}} \succeq_{\mathbf{c}}(U\rtimes\Gamma_{{\eta}^{\mathcal{H}}}, U, \mu)_{\mathcal{H}}$ where $\Gamma=\Aut(X)_U$, and $\mu$ denotes any linear character of $U$ different from $1_U,\Sgn_U$. Here, $\Aut(X)=\langle S,\tau,\alpha \rangle$ where $\langle S,\tau\rangle\cong \PGL_2(q)$, and $\alpha$ is induced by the action of the field automorphism $x\to x^p$ on $\SL(\mathbb{F}_q^2)$. The isomorphism $\theta$ induces an isomorphism $\Aut(\SU_2(q))\to \Aut(\SL_2(q))$ in which inner automorphisms are mapped to inner automorphisms. Call $\hat{\tau},\hat{\alpha}$ the images of $\tau,\alpha\in \Aut(\SU_2(q))$, the last sentence allows us to see that the order of $\hat{\tau}$,$\hat{\alpha}$ in $\Out(X)$ is $2,n$, respectively, moreover,  $\langle \hat{\alpha},\hat{\tau}\rangle$ viewed mod $\Inn(X)$ also generate $\Out(X)$. We know that  $U$ is self-normalizing in $X$, and we have that $|\Gamma|=2(q+1)n$ and $|\Out(X)|=2n$. Moreover, $\Gamma=\Aut(X)_U=\langle \bar{U},\hat{\alpha},\hat{\tau}\rangle$. Since $\hat{\alpha}$ fixes $\eta_i$, $\hat{\alpha}=x\alpha^t$ for some $x\in\Inn(X)$, $t\in \mathbb{Z}$, in fact, in $\Out(X)$, both $\hat{\alpha
}$ and $\alpha$ generate the same group. Also, $\hat{\tau}$ fuses $\eta_i$, so perhaps after multiplying by some power of $\hat{\alpha}$ we have that $\hat{\tau}=\tau$ in $\Out(X)$. The property that $(\Gamma\times \mathcal{H})_{\eta}=(\Gamma\times\mathcal{H})_{\mu}$ is still true so $\hat{\tau}$ fuses $\eta_1$ with $\eta_2$, seen as characters of $X=\SL_2(q)$ and $\mu^+$ with $\mu^-$, as characters of $U\subseteq\SL_2(q)$.\\

If $q\equiv 3 \mod 4$, note that $\mathbb{Q}(\chi)=\mathbb{Q}(i\sqrt{p})$. Let $\Sigma=X\rtimes\langle\alpha \rangle$, then $\Delta=\mathbf{N}_{\Sigma}(U)=\langle U,\hat{\alpha}\rangle$ and it induces $\Gamma_{\chi^{\mathcal{H}}}=\Gamma_{\chi}$ ( here and in what follows when we say $\hat{
\alpha}$ we mean $x\alpha^t\in\Sigma$, so now $x$ is viewed as an element of $X$). We now appeal to the fact that fixed $\chi=\eta_i$, $\Sigma$ embbeds into $\Sp_{2n}(p)$ in a way so that $\chi$ is the restriction of a Weil character of $\Sp_{2n}(p)$. Recall that $\Sp_{2n}(p)$ has two characters $\Phi_1,\Phi_2$ of degree $(q-1)/2$ and field of values $\mathbb{Q}(i\sqrt{p})$ (note that $p\equiv 3\mod 4)$. We may assume that $\Phi_1$ extends $\chi$, and we call $\chi^*$ the restriction of $\Phi_1$ to $\Sigma=\Sigma_{\chi}$. Note that by the proof of claim 1, if $\sigma\in \mathcal{H}$, $\sigma(i\sqrt{p})=i\sqrt{p}$. We have that for $(\delta,\sigma)\in (\Delta\times\mathcal{H})_{\chi}$, $(\chi^*)^{(\delta,\sigma)}=(\chi^*)^{\delta}=\chi^*$ since $\delta\in\Delta\subseteq\Sigma=\Sigma_{\chi}$. In the $U$ context, our character $\mu$ is rational so we may consider $\mu^*$ the unique rational extension to $\Delta_{\mu}=\Delta$ (since $\Delta_{\mu}/U=C_n$, and n is odd), and for any $(\delta,\sigma)\in (\Delta\times\mathcal{H})_{\chi}$ by uniqueness, it is clear that $(\mu^*)^{(\delta,\sigma)}=\mu^*$. So we are done by Theorem \ref{crit}(c) since both characters $\chi^*,\mu^*$ lie over the trivial character of $\mathbf{Z}(X)=\mathbf{C}_{\Delta}(X)$.\\

We now assume $n$ is even. The situation here is harder so we will go carefully, knowing that our ultimate goal is to apply Theorem \ref{crit}(c). Write $n=sm$ where $s=2^k$, $k\geq 1$, and $m$ is odd. Let $\Sigma=X\rtimes\langle\alpha\rangle$. Fix $\chi=\eta_i$, it extends to $\Sigma$. We try to understand preferred extensions of $\chi$ and the Galois action up on these extensions first, then we will go to the $U$ side. Again, denote by $\Phi_1,\Phi_2$ the two Weil characters of $\Sp_{2n}(p)$ of degree $(q-1)/2$ and by $\Psi_1,\Psi_2$ the two Weil characters of $\Sp_{2n}(p)$ of degree $(q+1)/2$, recall that all of them have field of values $\mathbb{Q}(\sqrt{\epsilon_pp})$, where $\epsilon_p=(-1)^{(p-1)/2}$. We may assume that $w=\Phi_1+\Psi_1$ is a total Weil character. Again, we have an embedding
$$
X\rtimes \langle\alpha^m\rangle \subseteq \Sigma \xhookrightarrow{} \Sp_{2n}(p)
$$
in a way so that our character $\chi$ extends to a Weil character, say $\Phi_1$, of $\Sp_{2n}(p)$.
Let $\chi^{\sharp}=\Phi_1|_{X\rtimes\langle\alpha^m\rangle}$, we claim that $\mathbb{Q}(\chi^{\sharp})=\mathbb{Q}(\sqrt{\epsilon_pp})$. To see this, since $|\mathbb{Q}(\sqrt{\epsilon_pp}):\mathbb{Q}|=2$ is enough to see that $\chi^{\sharp}$ is not rational. Note that $\Psi_1$ restricts to an irreducible character $\xi$ of degree $(q+1)/2$ of $X$. This $\xi$ is rational and has odd degree, and since there is only one linear character in $X$, $o(\xi)=1$. Hence, by the proof of Corollary 2.4 of \cite{NT} we have that there is a rational extension of $\xi$, say $\xi^{\sharp}\in \Irr(X\rtimes\langle\alpha^m\rangle)$. By Gallagher's theorem, there exists $\lambda\in\Irr(C_s)$ such that $\lambda\xi^{\sharp}=\Psi_1|_{X\rtimes\langle\alpha^m\rangle}$. From this we can see that $\mathbb{Q}(\Psi_1|_{X\rtimes\langle\alpha^m\rangle})=\mathbb{Q}$, indeed $\mathbb{Q}(\Psi_1|_{X\rtimes\langle\alpha^m\rangle})\subseteq \mathbb{Q}(\sqrt{\epsilon_pp})\subseteq \mathbb{Q}(e^{2\pi i/p})$, on the other hand $\mathbb{Q}(\lambda\xi^{\sharp})=\mathbb{Q}(\lambda)\subseteq \mathbb{Q}(e^{2\pi i/s})$ and $\mathbb{Q}(e^{2\pi i/s})\cap \mathbb{Q}(e^{2\pi i/p})=\mathbb{Q}$. Now, applying Theorem 3.5 of \cite{KT2} we find $h\in X\rtimes \langle\alpha^m\rangle$ such that $|w(h)|^2=p^m$. Hence $|\Psi_1(h)+\chi^{\sharp}(h)|\not\in \mathbb{Q}$ so $\chi^{\sharp}$ is not rational as wanted.

Let $\chi^*=\Phi_1|_{\Sigma}$, it follows that $\mathbb{Q}(\chi^*)=\mathbb{Q}(\sqrt{\epsilon_pp})$. Let $\Delta=\mathbf{N}_{\Sigma}(U)$, and let $(\delta,\sigma)\in (\Delta\times\mathcal{H})_{\chi}$, note that $(\chi^*)^{(\delta,\sigma)}=(\chi^*)^{\sigma}$. Now, $(\chi^*)^{\sigma}\neq \chi^*$ if and only if $\sigma$ is the nontrivial element of $\Gal(\mathbb{Q}(\sqrt{\epsilon_pp})/\mathbb{Q})$, and this, by the Gauss Sum formula happens if and only if $\left(\dfrac{\ell}{p}\right)=-1$ and $f$ is odd (here we are fixing a non-negative $f$ depending on $\sigma$, such that for primitive roots of unity of order $p$ and $4s$ say $\xi_p,\xi_{4s}$, $\sigma(\xi_p)=\xi_p^{\ell^f}$ and $\sigma(\xi_{4s})=\xi_{4s}^{\ell^f})$. If we have such $\sigma$, then by going up to $\Sp_{2n}(p)$ and applying $\sigma$ to $\Phi_1$ we see that $(\chi^*)^{\sigma}=\Phi_2|_{\Sigma}$. To fully understand the situation it remains to see what $\lambda\in \Irr(C_n)$ satisfies $\Phi_2|_{\Sigma}=\lambda\chi^*$. First we observe that $\mathbb{Q}(\lambda)\subseteq\mathbb{Q}(\sqrt{\epsilon_pp})$. From this we now prove that $\mathbb{Q}(\lambda)=\mathbb{Q}$ and hence it is the unique character of $C_n$ of order 2. To see this
start by noticing that since $\mathbb{Q}(\lambda)\subseteq\mathbb{Q}(\sqrt{\epsilon_pp})$, $\lambda|_{C_s}$ is rational and of course it is not trivial ($\mathbb{Q}(\chi^{\sharp})=\mathbb{Q}(\sqrt{\epsilon_pp})$ and $\sigma$ acts as the nontrivial element of $\Gal(\mathbb{Q}(\sqrt{\epsilon_pp})/\mathbb{Q})$, so moves $\chi^{\sharp}$). Now $\lambda\in \Irr(C_n)$ extends the order 2 character of $C_s$ and takes values in $\mathbb{Q}(\sqrt{\epsilon_pp})$. Since $\mathbb{Q}(\lambda)=Q(\xi_t)$ for some primitive $t$-th root of unity $\xi_t$, where $t|n$, we see directly (looking at degrees of field extensions and observing that $\mathbb{Q}(\xi_t)\neq\mathbb{Q}(i)$) that $\lambda$ has to be rational as long as we are not in the case $p=3$ and $3|n$. In the case $p=3$, $3|n$ we have to argue that we can not be in the situation of having our $\lambda\in \Irr(C_n)$ with $\mathbb{Q}(\lambda)=\mathbb{Q}(i\sqrt{3})$. These characters send a generator of $C_n$ to either a primitive $3$rd root of unity, say $\omega$, or to a primitive $6$-th root of unity, say $-\omega$. The characters of order 3  never take the value $-1$ and therefore can not extend $\lambda|_{C_s}$. It only remains to assume that $\Phi_2|_{\Sigma}=\lambda\Phi_1|_{\Sigma}$ with $o(\lambda)=6$. On one hand, $\det(\Phi_i)=1$, (go up to $\Sp_{2n}(p)$ and at there there is only one linear character). On the other hand $\det(\Phi_2|_{\Sigma})=\det(\lambda\Phi_1|_{\Sigma})=\lambda^{(q-1)/2}\det(\Phi_1|_{\Sigma})=\lambda^{(q-1)/2}$. Therefore we get that $6|(3^n-1)/2$ therefore $3|3^n-1$, a contradiction. So $\lambda\in\Irr(C_n)$ rational and nontrivial, so the unique character of order $2$. We have constructed $\chi^*\in\Irr(\Sigma)$ such that for$(\delta,\sigma)\in \Delta\times \mathcal{H}$, 

$$
(\chi^*)^{(\delta,\sigma)}=(\chi^*)^{\sigma}=\begin{cases} 
      \chi^* & \textrm{if   } \left(\dfrac{\ell}{p}\right)=1 \textrm{ or  ($2|f$  and}  \left(\dfrac{\ell}{p}\right)=-1 ). \\
      
      \lambda\chi^* & \textrm{if ($2 \nmid f$ and }  \left(\dfrac{\ell}{p}\right)=-1.)
   \end{cases}
$$

where $\lambda$ is the unique character of order 2 of $C_n$.\\

We now treat the $U$ part. Start by noticing that $\Delta=\mathbf{N}_{\Sigma}(U)=\langle U,\hat{\alpha} \rangle$ and $\Delta/U\cong C_n$. Note that $\Delta$ induces $\Gamma_{\chi^{\mathcal{H}}}=\Gamma_{\chi}$ when it acts on $U$ by conjugation. As above, the natural map $c:=\Delta \to \Gamma_{\chi}$ has kernel $C_2=\{1,z\}=\mathbf{Z}(X)= \mathbf{C}_{\Delta}(X)$.
We have that $\hat{\alpha}$ acts on $U$ the same way $\alpha\in\Aut(\SU_2(q))$ acted on $U\subseteq\SU_2(q)$. So, denoting by $b,u$ the images of $b,u\in \SU_2(q)$ we have that $U=\langle b,u\rangle$ satisfying the same relations, and $b^{\hat{\alpha}}=b^p, u^{\hat{\alpha}}=u$. The element $c(\hat{\alpha})$ which we have been denoting by $\hat{\alpha}$ has order $2n$ so $\hat{\alpha}$ has order divisible by $2n$ in $\Delta$. Since $\hat{\alpha}^{2n}\in \Ker(c)$ we have that $o(\hat{\alpha})=2n$ or $o(\hat{\alpha})=4n$ as an element of $\Delta$. If the order is $2n$, then, since $\hat{\alpha}^n\in U$ it has to be that $\hat{\alpha}^n=z$ but $b^z=b$ and $b^{\hat{\alpha}^n}=b^{-1}$ so $\hat{\alpha}$ has order $4n$ in $\Delta$. Let $\Delta_0/U$ de the unique $C_s$ subgroup of $\Delta/U$, note that $\Delta_0=\langle \hat{\alpha}^m,U\rangle$ and $o(b^2)=(q+1)/2$ which is not divisible by 2, and $o(\hat{\alpha}^m)=4s$ so it follows that we have an internal decomposition $\Delta_0=\langle b^2 \rangle\rtimes \langle\hat{\alpha}^m\rangle$ (note that $|\Delta:\Delta_0|=m$, $|\Delta_0|=2(q+1)s$). Denote by $\mu$ any (but fixed) of the two characters $\mu^+,\mu^-$ of $U$. Our character $\mu$ extends to $s$ characters of $\Delta_0$ and we want to have control of two of them. Note that $|\Lin(\Delta_0)|=|\Delta_0:\Delta_0'|\leq |\Delta_0:U'|=|\Delta_0:U||U:U'|=4s$ so we understand the group of linear characters of $\Delta_0$ as the linear characters extending $1_{\langle b^2\rangle}$ and simultaneously as the $4s$ linear characters extending the 4 linear characters of $U$ (we see that $\Lin(\Delta_0)\cong C_{4s}$). Note that any extension of $\mu$ has order $4s$. Fix $\mu^{\sharp}$ one such extension so that it also extends to $\Delta$ (do it by extending to $\Delta$ first and then restricting). Note that $\mu^{\sharp}(a)=\xi_{4s}$ for some $a\in\Delta_0$ where $o(\xi_{4s})=4s$ as a complex root of unity. \\ \\
\underline{Claim:} $\left(\dfrac{\ell}{p}\right)=1$ if and only if $\ell\equiv 1\mod 4s$ and $\left(\dfrac{\ell}{p}\right)=-1$ if and only if $\ell\equiv 2s+1\mod 4s$. 

To see this we start by noticing that the fact that $\ell|(q+1)$ implies $\ell\equiv 1 \mod 4s$ or $\ell \equiv 2s+1 \mod 4s$. Now assume that $\ell\equiv 1 \mod 4s$, then there exists an element $y\in\mathbb{F}_{\ell}$ of order $4s$. Let $t=p^m$. It is enough to show $\left(\dfrac{t}{\ell}\right)=1$ (note that by quadratic reciprocity  $\left(\dfrac{p}{\ell}\right)=1$ if and only if $\left(\dfrac{\ell}{p}\right)=1$). Note that $t^{2^k}\equiv-1 \mod \ell$. From this, we see that $t$ is a square modulo $\ell$. \\Conversely, if $\left(\dfrac{\ell}{p}\right)=1 $, since $(p^m)^s\equiv -1 \mod \ell$ and $p\equiv x^2\mod\ell$, we have that $-1\equiv y^{2s}\mod \ell$ for some $y$. From this, since $s$ is power of 2 we have that there is an element of order $4s$ in $\mathbb{F}_{\ell}$, so $4s|(\ell-1)$ and the claim is proved.\\

Thus, thinking about the extensions of $\mu$ in terms of where $\hat{\alpha}^m$ is sent, fix $\sigma\in\mathcal
{H}$ and $f$ as above in the proof, using the previous claim we have that since $\mu^{\sharp}(\hat{\alpha}^m)=\xi_{4s}$, if \begin{itemize}
    \item[(A)]  $\left(\dfrac{\ell}{p}\right)=1$ or $\left(\dfrac{\ell}{p}\right)=-1$ and $f$ is even, then $(\mu^{\sharp})^{\sigma}=\mu^{\sharp}$.
    \item[(B)] $\left(\dfrac{\ell}{p}\right)=-1$ and $f$ is odd, then $(\mu^{\sharp})^{\sigma}=\mu^{\sharp'}$ where $=\mu^{\sharp'}(\hat{\alpha}^m)=-\xi_{4s}$
\end{itemize}
 Let, by Theorem A \cite{Nav} since $m$ is odd, $\mu^*\in \Irr(\Delta)$ be the unique $\mathbb{Q}_{4s}$-valued extension of $\mu^{\sharp}$ to $\Delta$.  Let $(\delta,\sigma)\in (\Delta\times \mathcal{H})= (\Delta\times \mathcal{H})_{\mu}$, then $(\mu^*)^{(\delta,\sigma)}=(\mu^*)^{\sigma}$. If we are in situation (A), then $(\mu^*)^{\sigma}$ extends $\mu^{\sharp}$ and it is $\mathbb{Q}_{4s}$-valued so by uniqueness $(\mu^*)^{\sigma}=\mu^*$. If we are in situation (B) then $(\mu^*)^{\sigma}$ is the unique $\mathbb{Q}_{4s}$-valued extension of $\mu^{\sharp'}$ to $\Delta$, which we denote by $\mu^{*'}$. We conclude by understanding which $\lambda\in \Irr(\langle\hat{\alpha}U\rangle)=\Irr(C_n)$ satisfies $\mu^*=\lambda\mu^{*'}$. Note that $\lambda|_{\langle\hat{\alpha}^mU\rangle}$ is rational since $\lambda(\hat{\alpha}^m)=-1$, and $\lambda$ is certainly $\mathbb{Q}_{4s}$-valued. By Theorem A \cite{Nav}, it follows that $\lambda$ has to be rational valued and since nontrivial it is the unique character of $C_n$ of order two. Hence, we have constructed $\mu^*\in\Irr(\Delta)$ such that for$(\delta,\sigma)\in \Delta\times \mathcal{H}$, 

$$
(\mu^*)^{(\delta,\sigma)}=(\mu^*)^{\sigma}=\begin{cases} 
      \mu^* & \textrm{if   } \left(\dfrac{l}{p}\right)=1 \textrm{ or  ($2|f$  and}  \left(\dfrac{l}{p}\right)=-1 ). \\
      
      \lambda\mu^* & \textrm{if ($2 \nmid f$ and }  \left(\dfrac{l}{p}\right)=-1.)
   \end{cases}
$$
where $\lambda$ is the unique character of order 2 of $C_n$. Thus,  since both characters $\chi^*$, $\mu^*$ lie over the same character of $\mathbf{Z}(X)$,  we can finally conclude that $(X\rtimes \Gamma_{\eta^{\mathcal{H}}}, X, \eta)_{\mathcal{H}} \succeq_{\mathbf{c}}(U\rtimes\Gamma_{{\eta}^{\mathcal{H}}}, U, \mu)_{\mathcal{H}}$. \\

This concludes the proof of Claim 2 so the theorem follows.
\end{proof}

The following theorem treats $\PSL_2(q)$ when $q$ is even. The proof is a much simpler version of the proof of Theorem \ref{GM odd}. The character table of $\SL_2(q)$ when $q$ is even is loosely speaking formed by removing some rows and columns from the odd case. In particular, we do not have characters of degree $(q\pm1)/2$, we work with dihedral groups on the local side and also, a big part of the work to prove the central orderings can be recycled from Theorem 8.1 in \cite{BKNT}.
\begin{teo}\label{Sl2 2}
Let $p=2$ and let $q=p^n$. Then the simple groups $S=\SL_2(q)$ satisfy the inductive Galois-McKay condition for any prime $\ell$ dividing $|S|$.
 \end{teo}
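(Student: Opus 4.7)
The plan is to split on the prime $\ell$. The defining-characteristic case $\ell=2$ is handled by \cite{J,R,RS} as noted in the introduction, and the exceptional case $q=4$ is absorbed into $\PSL_2(5)\cong\PSL_2(4)$ already covered by Theorems \ref{teo3} and \ref{GM odd}. So one may assume $q=2^n\geq 8$ and $\ell$ an odd prime dividing $q^2-1$. In this range $X=\SL_2(q)$ has trivial center, $\Aut(X)=X\rtimes\langle\alpha\rangle$ with $\alpha$ the field automorphism of order $n$ (no diagonal automorphism exists because $\gcd(2,q-1)=1$), and an easy check gives $\mathbf{C}_{\Aut(X)}(X)=1$. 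The argument splits according to whether $\ell\mid q-1$ or $\ell\mid q+1$, in close analogy with Theorems \ref{teo3} and \ref{GM odd}, but with two substantial simplifications: there are no irreducible characters of degree $(q\pm 1)/2$ to handle (the hardest ingredient in the odd-$q$ proofs), and the torus normalizer $U=\mathbf{N}_X(T)$ is dihedral rather than dicyclic since $\mathbf{Z}(X)=1$.

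For odd $\ell\mid q-1$ take $T=\langle a\rangle\cong C_{q-1}$ split, $L\in\Syl_\ell(X)$ with $L\subseteq T$, $U=\mathbf{N}_X(T)\cong D_{2(q-1)}$, and $\Gamma=\Aut(X)_L=\Aut(X)_U=\langle U,\alpha\rangle$. Using the notation of Theorem 38.2 of \cite{D}, $\Irr_{\ell'}(X)=\{1_X,\Psi,\chi_i:1\leq i\leq (q-2)/2\}$ and $\Irr(U)=\Irr_{\ell'}(U)=\{1_U,\Sgn_U,\mu_j:1\leq j\leq (q-2)/2\}$. Define
$$\Omega\colon\; 1_X\mapsto 1_U,\qquad \Psi\mapsto\Sgn_U,\qquad \chi_i\mapsto\mu_i.$$
The $\Gamma\times\mathcal{H}$-equivariance of $\Omega$ is a direct check, much easier than in the odd-$q$ setting because the values of $\chi_i$ and $\mu_i$ on powers of $a$ are identical sums of $(q-1)$-th roots of unity and no Gauss-sum ambiguity arises. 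For the central orderings, the trivial pair and $(\Psi,\Sgn_U)$ satisfy $[{\cdot}_U,\cdot]=1$ by a direct character-value computation (using $\Psi(a^l)=1$ and $\Psi=0$ on transvections), so Theorem \ref{crit}(a) closes both; each pair $(\chi_i,\mu_i)$ is covered by the inductive Feit condition for $\PSL_2(q)$ in characteristic $2$ (Theorem 8.1 of \cite{BKNT}), whose local datum is exactly this $(U,\mu_i)$, and the passage from $\mathcal{G}$ to $\mathcal{H}\leq\mathcal{G}$ is immediate from Definition \ref{central order}.

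The case $\ell\mid q+1$ proceeds in parallel with $T=\langle b\rangle\cong C_{q+1}$, $U\cong D_{2(q+1)}$, $\Irr_{\ell'}(X)=\{1_X,\Psi,\theta_j:1\leq j\leq q/2\}$, and $\Omega\colon 1_X\mapsto 1_U,\;\Psi\mapsto\Sgn_U,\;\theta_j\mapsto\mu_j$; the equivariance and the pairs $(1_X,1_U)$ and $(\theta_j,\mu_j)$ are treated as before (the latter via Theorem 8.1 of \cite{BKNT}). The main technical obstacle is the pair $(\Psi,\Sgn_U)$: now a direct computation with $\Psi(b^m)=-1$ gives $[\Psi_U,\Sgn_U]=0$ (indeed $\Psi_U=\sum_{j=1}^{q/2}\mu_j$ has no linear constituent), so Theorem \ref{crit}(a) is unavailable. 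The plan is to dispatch this pair via Theorem \ref{crit}(c) applied with $\Sigma=X\rtimes\langle\alpha\rangle=\Aut(X)$ and $\Delta=\mathbf{N}_\Sigma(U)=U\rtimes\langle\alpha\rangle$. Since $\mathbf{C}_\Sigma(X)=1$, the multiplicity hypothesis collapses to $\chi^\sharp(1)\mu^\sharp(1)=q\neq 0$, which is automatic. Take $\chi^\sharp\in\Irr(\Sigma)$ to be the rational extension of the Steinberg character supplied by Theorem A of \cite{MS}, and $\mu^\sharp\in\Irr(\Delta)$ the rational extension of $\Sgn_U$ defined by $\mu^\sharp(\alpha)=1$. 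Because $\Sigma/X\cong\Delta/U\cong C_n$ is cyclic and both extensions are rational, they are fixed by every $(\delta,\sigma)\in(\Gamma\times\mathcal{H})_\Psi=\Gamma\times\mathcal{H}$ (rationality gives $\sigma$-invariance, and the action of $\delta$ is inner within $\Sigma$ and $\Delta$ respectively), so the twisting linear character $\gamma$ in Theorem \ref{crit}(c) is trivial and the hypothesis $(\mu^\sharp)^{\delta\sigma}=\gamma\mu^\sharp$ holds automatically, completing the proof.
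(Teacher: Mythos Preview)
Your overall plan matches the paper's, but your route for the generic characters $\chi_i$ (resp.\ $\theta_j$) is genuinely different and rests on an unverified assumption. The paper does \emph{not} use $\chi_i\mapsto\mu_i$; it uses the shifted bijection $\chi_i\mapsto\lambda_{2i}$ (indices read modulo $q\mp1$). The point is that a direct computation gives $[\chi_j|_U,\lambda_j]=2$ for every $j$, while $[\chi_j|_U,\lambda_{2j}]=1$ whenever $2j\not\equiv\pm j$, i.e.\ $j\neq(q-1)/3$. So the paper closes almost all of these cases by Theorem~\ref{crit}(a), at the cost of a separate argument via part (c) for the single exceptional index $j=(q\mp1)/3$ (only present when $3\mid q\mp1$). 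Your map never yields multiplicity one, so you cannot appeal to (a); instead you assert that Theorem~8.1 of \cite{BKNT} proves the Feit central ordering with local datum exactly $(U,\mu_i)$ and then pass from $\mathcal G$ to $\mathcal H\leq\mathcal G$. This is plausible by analogy with the odd-$p$ situation (where the paper does recycle the Feit local data, and it really is $\mu_i$), but you have not checked it here, and the present paper's even-$q$ argument does not recycle that central ordering at all---it cites \cite{BKNT} only for the restriction computation giving multiplicity one with $\lambda_{2j}$. If \cite{BKNT} happened to choose $\lambda_{2j}$ rather than $\lambda_j$ (both choices satisfy $(\Gamma\times\mathcal G)_{\chi_j}=(\Gamma\times\mathcal G)_\mu$ since $q\mp1$ is odd), your argument as written would break. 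Either verify what \cite{BKNT} actually uses, or adopt the paper's shifted bijection together with its special-case analysis at $(q\mp1)/3$.

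A minor point: your handling of $(\Psi,\Sgn_U)$ for $\ell\mid q+1$ via the rational extension $\mu^\sharp(\alpha)=1$ is cleaner than the paper's, which produces a $\mathbb Q_{2s}$-valued extension and then argues that $\mathcal H$ fixes $\mathbb Q_{2s}$ pointwise; both arguments are correct since $\Delta=U\rtimes\langle\alpha\rangle$ is genuinely split and $\mathbf C_\Sigma(X)=1$.
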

 \begin{proof}
 First, we note that the case $\SL_2(4)\cong \PSL_2(5)$ was already treated so we will assume that $n\geq 3$. Also, we have already mentioned that for $\ell=2$ the proof is already in the literature, so we will assume that $\ell|(q-1)$ and that $\ell|(q+1)$.
 The universal covering group of $S$ is $X=S=\SL_2(q)$, $\Aut(X)=\langle X, \alpha \rangle=X\rtimes \langle\alpha \rangle$ where $\alpha$ acts by taking power $ 2$ to the matrix entries. We will use notation from Theorem 38.2 of \cite{D} for the irreducible characters and conjugacy classes of $X$. \\

 Assume first that $\ell|(q-1)$. Consider the diagonal torus (split case) $T=<a>\cong C_{q-1}$, we take $L\in \Syl_{\ell}(X)\subseteq T$. Let $U=\mathbf{N}_X(T)=\langle a,u |$ $a^{q-1}=1,$ $u^2=1$, $a^u=a^{-1}\rangle$, note that $U$ is dihedral of order $2(q-1)$, and $u$ is in the unique class of involutions of $X$. In this case, since $\mathbf{C}_X(L)=T$ (see the proof of 38.1 \cite{D}), we we can argue as in the proof of Theorem 1.4.3 of \cite{Bon} to show that  $U=\mathbf{N}_X(L)$. Hence, $U$ is self-normalizing and intravariant in $X$. 
 Since $U$ is $\langle  \alpha \rangle $-invariant we have that $\Aut(X)_U=\langle U, \alpha \rangle $. Now, since $L$ is also  $\langle \alpha \rangle $-invariant we have that  $\Gamma := \Aut(X)_L=\langle U, \alpha \rangle = \Aut(X)_U$. \\

 Note that $\Irr_{\ell'}(X)=\{1_X,\Psi,\chi_i:$ $1\leq i\leq (q-2)/2\}$. Also, note that $\Irr_{\ell'}(U)=\Irr(U)= \{ 1_U,\Sgn,\lambda_t\thinspace | \thinspace 1\leq t\leq (q-2)/2\}$, where $\lambda_t: 1\to 2,$ $a^l\to \rho^{tl}+\rho^{-tl}$ where $o(\rho)=q-1$, $c\to 0$ for $1\leq l\leq q-2$ and $(1_A)^U=1_U+\Sgn$. We give the next bijection viewing the index $i$ of $\lambda_i$ as in $\mathbb{Z}/(q-1)\mathbb{Z}$. 

 \begin{center}
     $\Omega:$ $\Irr_{\ell'}(X)\longrightarrow \Irr_{\ell'}(U)$, $
     1_X \longmapsto 1_U$, $\Psi \longmapsto \Sgn$,  $\chi_i\longmapsto\lambda_{2i}.$
 \end{center}
 \underline{Claim 1:} $\Omega$ is $\Gamma \times \mathcal{H}$-equivariant.
 
 We note that $1_X, \Psi$ and $1_U, \Sgn_U$ are $\Gamma\times \mathcal{H}$-stable. Indeed, $\Psi$ is the unique irreducible character of $X$ of degree $q$ and  $\Sgn$ is the unique nontrivial linear character of $U$. We need to show that the restriction of $\Omega$ to $\{\chi_i|$ $1\leq i\leq (q-2)/2\}$ is $\Gamma\times\mathcal{H}$-equivariant. For this we first show $\chi_i\to\lambda_i$ is equivariant (with $1\leq i\leq (q-2)/2$) and then show (with indices viewed
 in $\mathbb{Z}/(q-1)\mathbb{Z}$) that $\lambda_i\to \lambda_{2i}$ is equivariant. Let $(\delta,\sigma)\in \Gamma\times \mathcal{G}$ where $\delta\in \alpha^{-s}
 U$ with $s\in \mathbb{Z}$. Now since $
 U$ fixes both $\chi_j, \lambda_j$, also $\chi_j(a)= \lambda_j(a)$, and the subindex $j$ of $\chi_j$ (resp. $ \lambda_j$) is determined by the value at $a$. Note that $\chi_j(a^{2^m})= \lambda_j(a^{2^m})$ for $m\geq 0$ since $q-1\nmid 2^{m}$. Thus, we have that $\chi_j^{(\delta,\sigma)}(a)=\sigma(\chi_j(a^{2^s}))=\sigma( \lambda_j(a^{2^s}))= \lambda_j^{(\delta,\sigma)}(a)$, it follows that $\chi_i\to\lambda_i$ is $\Gamma\times\mathcal{G}$-equivariant. Now, fix $k$ coprime to $q-1$ such that $\sigma(\rho)=\rho^k$, note that $\lambda_i^{(\delta,\sigma)}=\lambda_{2^sik}$, it follows that $\lambda_{2i}^{(\delta,\sigma)}=\lambda_{2^{s+1}ik}$ proving Claim 1. To verify the inductive Galois-McKay condition for $\ell$ it remains to prove the following.\\
 \underline{Claim 2:} For every $\chi\in\Irr_{\ell'}(X)$ we have that
 $$
 (X\rtimes \Gamma_{\chi^{\mathcal{H}}}, X, \chi)_{\mathcal{H}} \succeq_{\mathbf{c}}(U\rtimes\Gamma_{\chi^{\mathcal{H}}}, U, \Omega(\chi))_{\mathcal{H}}.
 $$
 
 We will use the letter $\mu$ to denote $\Omega(\chi)$ for a fixed $\chi$. Observe that for every $\chi\in\Irr_{\ell'}(X)$ by Claim 1 we have that $(\Gamma \times \mathcal{H})_{\chi}=(\Gamma \times \mathcal{H})_{\mu}$. Let us first consider  $\chi\in \{1_X,\Psi\}$, one can verify directly that $[ \chi_U, \Omega(\chi)]=1$ so our conclusion holds by Theorem \ref{crit}(a).\\
 Now consider $\chi=\chi_j$ for some $1\leq j\leq (q-2)/2$. Assume first that $\chi_j\neq\chi_{(q-1)/3}$. By the proof of Theorem 8.1 \cite{BKNT}, $[\chi_U,\mu]=1$ so we are done by Theorem \ref{crit}(a). If $3|(q-1)$ and $\chi_j=\chi_{(q-1)/3}$, then we have $\mu=\mu_{2(q-1)/3}=\mu_{(q-1)/3}$ which comes from inducing $\xi\in\Irr(\langle a\rangle)$ of order 3. Both characters are rational and $\alpha$-invariant. Consider $\Sigma=\Aut(X)=X\rtimes\langle\alpha\rangle$ and let $\Delta=\mathbf{N}_{\Sigma}(U)=U\rtimes\langle\alpha\rangle$ which induces $\Gamma=\Gamma_{\chi}=\Gamma_{\chi^{\mathcal{H}}}$ when acts on $X$ via conjugation. If $n$ is odd consider $\mu^*\in\Irr(\Delta)$ the unique rational extension of $\mu$ to $\Delta$. If $n$ is even we consider an extension as follows. Note that $\Delta_{\xi}=\langle a\rangle \rtimes\langle u\alpha\rangle$. Note that $|\Delta:\Delta_{\xi}|=2$, if we extend $\xi$ to $\xi^+ \in \Irr(\Delta_{\xi})$ by letting $\xi^+(u\alpha)=1$ and then we consider the induced character $\mu^*=(\xi^+)^{\Delta}$ we have that this character extends $\lambda$ by Mackey's formula. Moreover it is rational, to see this note that $[\alpha,u]=1$, take $\{1,u\}$ as a transversal and apply the formula given right before Lemma 5.2 of \cite{Is}. For $\chi$, apply (since the degree is odd and the determinant is 1) the proof of Corollary 2.4 of \cite{NT} to find a rational valued character $\chi^*\in \Irr(\Aut(X))$ extending $\chi$. We can now conclude by Theorem \ref{crit}(c) since $\mathbf{C}_{\Aut(X)}(X)=1$. This concludes the proof of Claim 2, hence the proof for $\ell$ dividing $q-1$.\\
 
 Assume now that $\ell$ divides $q+1$. We will work with the non-split torus $C_{q+1}$ and for that reason it is convenient to see $X$ as $\SU_2(q)$. Formally, let $V=\langle e,f\rangle_{\mathbb{F}_{q^2}}$, with the Hermitian product $e\circ e=1=f\circ f$, and $e\circ f=0$.  We write $\Aut(X)=
 \langle X, \alpha \rangle$ where  $\alpha$ acts by taking power $2$ to the matrix entries after fixing an orthonormal basis. For the character table we will use notation from Theorem 38.1 of \cite{D}. Let $\xi\in \mathbb{F}_{q^2}$ have order $q+1$
 \begin{equation*} b = \begin{pmatrix}\xi & 0\\ 0 & \xi^{-1}\end{pmatrix}, u = \begin{pmatrix}0 & 1\\ 1 & 0\end{pmatrix}\end{equation*}

 Consider the torus (non-split case) $T=<b>\cong C_{q+1}$, we take $L\in \Syl_{\ell}(X)\subseteq T$. Let $U=\mathbf{N}_X(T)=\langle b,u |$ $b^{q+1}=1,$ $u^2=1$, $b^u=b^{-1}\rangle$, note that $U$ is dihedral of order $2(q+1)$, arguing as we did in the first part of the proof $U=\mathbf{N}_X(L)$ so it is self-normalizing and intravariant in $X$. Since $U$ is $\langle \alpha \rangle $-invariant we have that $\Aut(X)_U=\langle U, \alpha \rangle $. Now, since $L$ is also  $\langle \alpha \rangle $-invariant we have that  $\Gamma := \Aut(X)_L=\langle U, \alpha \rangle = \Aut(X)_U$. 

 Note that $\Irr_{\ell'}(X)=\{1_X,\Psi,\theta_i:$ $1\leq i\leq q/2\}$. Also, note that $\Irr_{\ell'}(U)=\Irr(U)= \{ 1_U,\Sgn,\lambda_t\thinspace | \thinspace 1\leq t\leq q/2\}$, where $\lambda_t: 1\to 2,$ $b^l\to \nu^{tl}+\nu^{-tl}$ where $o(\nu)=q+1$, $c\to 0$ for $1\leq l\leq q$ and $(1_B)^U=1_U+\Sgn$. We give the next bijection viewing the index $i$ of $\lambda_i$ as in $\mathbb{Z}/(q+1)\mathbb{Z}$. 

 \begin{center}
     $\Omega:$ $\Irr_{\ell'}(X)\longrightarrow \Irr_{\ell'}(U)$, $
     1_X \longmapsto 1_U$, $\Psi \longmapsto \Sgn$, $\theta_i\longmapsto\lambda_{2i}.$
 \end{center}
 \underline{Claim 1:} $\Omega$ is $\Gamma \times \mathcal{H}$-equivariant.
 
 As in the first part, $1_X, \Psi$ and $1_U, \Sgn_U$ are $\Gamma\times \mathcal{H}$-stable. To complete the proof of Claim 1 we just need to show that the restriction of $\Omega$ to $\{\theta_i|$ $1\leq i\leq q/2\}$ is $\Gamma\times\mathcal{H}$-equivariant. The proof of this goes exactly as the previous case. We have proved claim 1. To verify the inductive Galois-McKay condition for $\ell$ it remains to prove the following.\\ \underline{Claim 2:} For every $\chi\in\Irr_{\ell'}(X)$ we have that
 $$
 (X\rtimes \Gamma_{\chi^{\mathcal{H}}}, X, \chi)_{\mathcal{H}} \succeq_{\mathbf{c}}(U\rtimes\Gamma_{\chi^{\mathcal{H}}}, U, \Omega(\chi))_{\mathcal{H}}.
 $$
 
 We will use the letter $\mu$ to denote $\Omega(\chi)$ for a fixed $\chi$. Observe that for every $\chi\in\Irr_{\ell'}(X)$ by Claim 1 we have that $(\Gamma \times \mathcal{H})_{\chi}=(\Gamma \times \mathcal{H})_{\mu}$. Assume that $\chi=1_X$, then  $[\chi,\Omega(\chi)]=1$, hence we are done by Theorem \ref{crit}(a). Now take $\chi=\Psi$, it is easy to compute that $[\chi,\Omega(\chi)]=0$, hence we do the following. Consider  the group $\Sigma=\Aut(X)\supseteq X$. Then $\Delta=\mathbf{N}_{\Sigma}(U) $ induces $\Gamma=\Gamma_{\chi}=\Gamma_{\chi^{\mathcal{H}}}$. Again, we use the fact (see \cite{MS}) that the Steinberg character has a rational valued extension $\chi^*$ to $\Sigma$, also, since $\mu$ is linear and of order $2$, if we write $n=sm$ with $s=2^k$ and $m$ odd, we can find making use of Theorem A \cite{Nav} a $\mathbb{Q}_{2s}$ valued extension $\mu^*$ of $\mu$ to $\Delta$. Of course, $\chi^*$ is $\Delta\times \mathcal{H}$-invariant and so is $\mu^*$ (since $\ell|q+1$, $\mathcal{H}$ fixes $\mathbb{Q}_{2s}$ point-wise), hence we are done by Theorem \ref{crit} (c) since $\mathbf{C}_{\Sigma}(X)=1$.\\
 Now, let $\chi=\theta_j$ for some $1\leq j\leq q/2$, assume first that $\theta_j\neq\theta_{(q+1)/3}$. By the proof of  Theorem 8.1 \cite{BKNT}, we have that $[\chi,\mu]=1$ so by Theorem \ref{crit}(a), 
 $(X\rtimes \Gamma_{\theta_j^{\mathcal{H}}}, X, \theta_j)_{\mathcal{H}} \succeq_{\mathbf{c}}(U\rtimes\Gamma_{{\theta_j}^{\mathcal{H}}}, U, \mu_j)_{\mathcal{H}}$. Now assume that  $3|(q+1)$ and $\theta_j=\theta_{(q+1)/3}$, then we have that $\mu=\mu_{2(q+1)/3}=\mu_{(q+1)/3}$ which comes from inducing $\xi\in\Irr(\langle b\rangle)$ of order 3. Both characters are rational and $\alpha$-invariant. 
 Assume first that $n$ is odd, consider $\Sigma=X\rtimes \langle \alpha^2\rangle$ which induces $\Aut(X)$ and $\mathbf{C}_{\Sigma}(X)=1$. Let $\Delta=\mathbf{N}_{\Sigma}(U)=U\rtimes\langle \alpha^2\rangle$, which induces $\Gamma=\Gamma_{\chi}=\Gamma_{\chi^\mathcal{H}}$ when acting on $X$ via conjugation. It is enough to consider  the unique rational extension of $\mu,\chi$ to $\Delta,\Sigma$ respectively and apply Theorem \ref{crit}(c). If $n$ is even, note that $3\nmid q+1$ so we are done.
 \end{proof}

Finally, we check that the inductive Galois-McKay condition holds for $S=\PSL_2(9)=A_6$ for the prime 5. This is the only prime left to check on the literature for $A_6$, with this, we will conclude the proof of Theorem C. Full details of the next proof are available on request.
\begin{teo}
The group $A_6$ verifies the inductive Galois-McKay condition for all primes $\ell$ dividing $|A_6|$.
\end{teo}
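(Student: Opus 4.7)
The strategy is to dispatch the three primes dividing $|A_6|=2^3\cdot 3^2\cdot 5$ separately. The primes $\ell=2$ and $\ell=3$ (the defining characteristic of $\PSL_2(9)$) are already covered by the general results cited in the paragraph preceding Theorem \ref{teo3}, so the task reduces to the remaining prime $\ell=5$.

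For $\ell=5$ I would work inside the universal covering group $X=6.A_6$ of order $2160$. Since $\gcd(5,|\mathbf{Z}(X)|)=1$, a Sylow $5$-subgroup $L$ has order $5$ and projects faithfully onto a Sylow $5$-subgroup of $A_6$. Set $U=\mathbf{N}_X(L)$; the induced subgroup in $A_6$ is dihedral of order $10$, so $|U|=60$. Standard arguments show that $U$ is self-normalizing and intravariant in $X$, and one computes $\Gamma:=\Aut(X)_L=\Aut(X)_U$ together with the action of $\Gamma$ and of $\mathcal{H}$ on $\Irr_{5'}(X)$ and $\Irr_{5'}(U)$ directly from the character tables, which are available in the \textsc{GAP} character table library.

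The next step is the construction of a $\Gamma\times\mathcal{H}$-equivariant bijection $\Omega:\Irr_{5'}(X)\to\Irr_{5'}(U)$, pairing characters by degree and by $\Gamma\times\mathcal{H}$-orbit and fixing the remaining ambiguity by inspecting the values on $U$. With $\Omega$ in hand, the central ordering of Definition \ref{central order}(iii) has to be verified for every $\chi\in\Irr_{5'}(X)$, and here the criteria of Theorem \ref{crit} are the main tool. The easy cases---those where $[\chi_U,\Omega(\chi)]=1$, or where $\Aut(X)_\chi=\Inn(X)$ and $\chi$, $\Omega(\chi)$ lie over the same central character---are dispatched by parts (a) and (b) respectively.

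The main obstacle will be the few remaining characters $\chi$ for which neither shortcut applies: a $\Gamma$-invariant $\chi$ whose restriction $\chi_U$ contains $\Omega(\chi)$ with multiplicity greater than $1$, or a character genuinely fused by outer automorphisms, where an extension argument is unavoidable. For those, I would apply Theorem \ref{crit}(c) by embedding $X$ as a normal subgroup in some $\Sigma$ with $\Sigma/X\cong\Out(X)\cong C_2\times C_2$ and setting $\Delta=\mathbf{N}_\Sigma(U)$, then exhibiting extensions $\chi^\sharp\in\Irr(\Sigma_\chi)$ and $\mu^\sharp\in\Irr(\Delta_\mu)$ whose inner product on $\mathbf{C}_{\Delta_\mu}(X)$ is nonzero and whose Clifford-type linear characters over $\Sigma_\chi/X=\Delta_\mu/U$ coincide. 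Because $|\Out(X)|=4$, the search for such compatible extensions reduces to a finite case analysis that can be carried out in \textsc{GAP} in exactly the spirit of the proof of Theorem \ref{sz8}.
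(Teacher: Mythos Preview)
Your plan coincides with the paper's: reduce to $\ell=5$, take $X=6.A_6$, $U=\mathbf{N}_X(L)$, build the $\Gamma\times\mathcal{H}$-equivariant bijection by direct inspection in \textsc{GAP}, and verify the central orderings via Theorem~\ref{crit}. The only differences are tactical. Where you propose handling the residual cases by criterion~(c) inside a single overgroup $\Sigma$ with $\Sigma/X\cong C_2\times C_2$, the paper instead stratifies by $\ker\chi$: it uses~(e) for the faithful degree-$12$ characters (so no overgroup is needed there at all), uses~(a) for the faithful degree-$6$ characters, passes to the quotient $3.A_6$ and works with $\Sigma=3.A_6.2^2$ for the characters with kernel $C_2$, and for the characters with kernel containing $C_3$ simply recycles the $\SL_2(9)$-bijection already established in Theorem~\ref{GM odd}. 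The paper's quotient reductions give immediate access to overgroups that are already in the \textsc{GAP} library, whereas your route would require an explicit realisation of a group of shape $6.A_6.2^2$; in practice you would likely be pushed toward the same reductions the paper makes.
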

\begin{proof}
    
Note that $|A_6|=360=5\cdot2^3\cdot3^2$, so we may assume $\ell=5$. The universal cover of $S$ is $6.A_6$. The group $X$ is provided by GAP (\cite{GAP}), we have that $\Aut(X)\cong \Aut(S)$, moreover, table 6.3.1 of \cite{GLS} reveals that the action of $\Out(X)=C_2\times C_2$ on $\mathbf{Z}(X)$ is nontrivial. We have that $\Aut(X)=\langle A_6,\tau,\alpha\rangle$ where (the images under canonical projection of) $\tau,\alpha$ generate $\Out(X)$. Let $P\in \Syl_5(X)$ and let $U=\mathbf{N}_X(P)$ which is self-normalizing and intravariant on $X$.  It is easy to see that  (multiplying the outer automorphisms by an inner automorphism if needed) $\tau,\alpha$ fix $U$, thus, $\Gamma=\Aut(X)_U=\langle \bar{U}, \tau, \alpha \rangle$ where $\bar{U}$ denotes reduction modulo the center (this group is in the normalizer of a Sylow$-5$ subgroup of $A_6$ and the same holds for reductions modulo subgroups of the center). Also, we may assume that the group $6.A_6.2_1$ induces $\tau$ and the group $6.A_6.2_2$ induces $\alpha$, these groups are also in \cite{GAP} and we have used them to apply Theorem \ref{crit}. Using computations with GAP we can explicitly give a $\Gamma\times\mathcal{H}$-equivariant bijection between $\Irr_{5'}(X)$ and $\Irr_{5'}(U)$ satisfying $(X\rtimes \Gamma_{\chi^{\mathcal{H}}}, X, \chi)_{\mathcal{H}} \succeq_{\mathbf{c}}(U\rtimes\Gamma_{\chi^{\mathcal{H}}}, U, \Omega(\chi))_{\mathcal{H}}.$ One can pair faithful characters of $X$ of degree 12 with faithful characters of $U$ of degree 2, and use Theorem \ref{crit}(e). The faithful characters of degree 6 are paired with the unique linear characters of $U$ such that their kernel only contains elements of order 1 and 5 in $U$, then one uses Theorem \ref{crit}(a). The characters of $X$ with kernel $C_2$ (of degrees 3,6,9) are seen as characters of $3.A_6$, they are paired with suitable characters of $U$ and we have checked the central order condition with applications of Theorem \ref{crit}(a), \ref{crit}(c) (to apply \ref{crit}(c), for the characters of order 6 we had to do some computations with $\Sigma=3.A_6.2^2$, also provided by \cite{GAP}). The remaining characters can be seen as characters of $\SL_2(9)$ and we can recycle the bijection from Theorem \ref{GM odd}.  
\end{proof}

\end{document}